
\documentclass[11pt, draft]{amsart}
\usepackage{amssymb, amstext, amscd, amsmath, amssymb}
\usepackage{mathtools, xypic, paralist, color, dsfont, rotating}
\usepackage{verbatim}
\usepackage{enumerate}

\numberwithin{equation}{section}



%
\makeatletter
\def\@cite#1#2{{\m@th\upshape\bfseries%
[{#1\if@tempswa{\m@th\upshape\mdseries, #2}\fi}]}}
\makeatother
%
\theoremstyle{plain}
\newtheorem{theorem}{Theorem}[section]
\newtheorem{corollary}[theorem]{Corollary}
\newtheorem{proposition}[theorem]{Proposition}

\theoremstyle{definition}
\newtheorem{definition}[theorem]{Definition}
\newtheorem{example}[theorem]{Example}
\newtheorem{examples}[theorem]{Examples}

\newtheorem{remark}[theorem]{Remark}

\newtheorem*{acknow}{Acknowledgements}
\theoremstyle{remark}


%

\mathtoolsset{centercolon}
%
  \newcommand{\A}{{\mathcal{A}}}
  \newcommand{\B}{{\mathcal{B}}}

\renewcommand{\H}{{\mathcal{H}}}

  \newcommand{\K}{{\mathcal{K}}}
\renewcommand{\L}{{\mathcal{L}}}
  \newcommand{\M}{{\mathcal{M}}}
  
\renewcommand{\O}{{\mathcal{O}}}

  \newcommand{\R}{{\mathcal{R}}}

\newcommand{\eps}{\varepsilon}
\def\al{\alpha}
\def\be{\beta}

\def\de{\delta}

\def\la{\lambda}

\def\om{\omega}

\def\si{\sigma}

\newcommand\vphi{\varphi}


\newcommand{\bA}{\mathbb{A}}
\newcommand{\bB}{\mathbb{B}}
\newcommand{\bC}{\mathbb{C}}

\newcommand{\bF}{\mathbb{F}}
\newcommand{\bH}{\mathbb{H}}

\newcommand{\bT}{\mathbb{T}}
\newcommand{\bZ}{\mathbb{Z}}
\newcommand{\bR}{\mathbb{R}}


\newcommand{\Bi}{{\mathbf{i}}}

\newcommand{\Bl}{{\mathbf{l}}}

\newcommand{\Br}{{\mathbf{r}}}


\newcommand{\foral}{\text{ for all }}
\newcommand{\qand}{\quad\text{and}\quad}

\newcommand{\qiff}{\quad\text{if and only if}\quad}
\newcommand{\qfor}{\quad\text{for}\ }


\newcommand{\ca}{\mathrm{C}^*}

\newcommand{\ol}{\overline}

\newcommand{\wh}{\widehat}


\newcommand{\ad}{\operatorname{ad}}
\newcommand{\Alg}{\operatorname{Alg}}

\newcommand{\Aut}{\operatorname{Aut}}
\newcommand{\alg}{\operatorname{alg}}

\newcommand{\diag}{\operatorname{diag}}

\newcommand{\dist}{\operatorname{dist}}

\newcommand{\End}{\operatorname{End}}

\newcommand{\id}{{\operatorname{id}}}

\newcommand{\Lat}{\operatorname{Lat}}
\newcommand{\mt}{\emptyset}

\newcommand{\Ref}{\operatorname{Ref}}

\newcommand{\spn}{\operatorname{span}}

\newcommand{\sca}[1]{\left\langle#1\right\rangle} 
\newcommand{\nor}[1]{\left\Vert #1\right\Vert} 
\newcommand{\bo}[1]{\mathbf{#1}} 

\newcommand{\scp}[3]{#1 \, \ol{\times}_{#2} \, #3}

\newcommand{\un}[1]{{\underline{#1}}} 

\addtocontents{toc}{\protect\setcounter{tocdepth}{1}}

\begin{document}

\title[Free Multivariate w*-Semicrossed Products]{Free Multivariate w*-Semicrossed Products: Reflexivity and the Bicommutant Property}

\author[R. T. Bickerton]{Robert T. Bickerton}
\address{School of Mathematics and Statistics\\ Newcastle University\\ Newcastle upon Tyne\\ NE1 7RU\\ UK}
\email{r.bickerton@ncl.ac.uk}

\author[E.T.A. Kakariadis]{Evgenios T.A. Kakariadis}
\address{School of Mathematics and Statistics\\ Newcastle University\\ Newcastle upon Tyne\\ NE1 7RU\\ UK}
\email{evgenios.kakariadis@ncl.ac.uk}

\thanks{2010 {\it  Mathematics Subject Classification.} 47L65, 47A15, 47L80, 47L75.}

\thanks{{\it Key words and phrases:} Reflexivity, semicrossed products.}

\maketitle

\vspace{-.5cm}
\begin{center}
{\small \emph{Dedicated to the memory of Donald E. Sarason.}}
\end{center}

\begin{abstract}
We study w*-semicrossed products over actions of the free semigroup and the free abelian semigroup on (possibly non-selfadjoint) w*-closed algebras.
We show that they are reflexive when the dynamics are implemented by uniformly bounded families of invertible row operators.
Combining with results of Helmer we derive that w*-semicrossed products of factors (on a separable Hilbert space) are reflexive.
Furthermore we show that w*-semicrossed products of automorphic actions on maximal abelian selfadjoint algebras are reflexive.
In all cases we prove that the w*-semicrossed products have the bicommutant property if and only if the ambient algebra of the dynamics does also.
\end{abstract}

\section{Introduction}\label{S:intro}

Reflexivity and the bicommutant property are closely related to invariant subspaces problems.
A w*-closed algebra $\A$ is \emph{reflexive} if it coincides with the algebra that leaves invariant the invariant subspaces of $\A$.
It is said to have the \emph{bicommutant property} if it coincides with its bicommutant $\A''$.
Von Neumann algebras are reflexive and have the bicommutant property, however this seems to be too crude to be the prototype.
Results are considerably harder to get for nonselfadjoint algebras.
For example $\A^{(\infty)}$ is always reflexive
but it may differ from $(\A^{(\infty)})''$, e.g. when $\A \neq \A''$.
Arveson \cite{Arv75} also introduced a function $\beta$ to measure reflexivity.
An algebra $\A$ is \emph{hyper-reflexive} if $\beta$ is equivalent to the distance function from $\A$.
A remarkable result of Bercovici \cite{Ber98} asserts that every wot-closed algebra whose commutant contains two isometries with orthogonal ranges is hyper-reflexive.

The reflexivity term is attributed to Halmos and it was first used by Radjavi-Rosenthal \cite{RR69}. 
It is considered as Noncommutative Spectral Synthesis in conjunction with synthesis problems in commutative Harmonic Analysis, and it offers a systematic way of reconstructing an algebra from a set of invariant subspaces; see the excellent exposition of Arveson \cite{Arv84}.
The first result regarding reflexivity concerns the Hardy algebra of the disc and it was proved by Sarason \cite{Sar66}.
It inspired a great amount of subsequent  research, e.g. Radjavi-Rosenthal \cite{RR73}, including the seminal work of Arveson \cite{Arv74} on CSL algebras.
Further examples include the important class of nest algebras \cite{Dav88}, the $\mathbb{H}^p$ Hardy algebras examined by Peligrad \cite{Pel80}, and algebras of commuting isometries or tensor products with the Hardy algebras studied by Ptak \cite{Pta86}. 
Algebras related to the \emph{free semigroup} $\bF_+^d$ were examined in a number of papers by Arias and Popescu \cite{AP95, Pop91}, Davidson, Katsoulis and Pitts \cite{DKP01, DP99}, Kennedy \cite{Ken09} and Fuller-Kennedy \cite{FK13}.
In far more generality, free semigroupoid algebras were also tackled by Kribs-Power \cite{KP04}.
Representations of the Heisenberg semigroup were recently studied by Anoussis-Katavolos-Todorov \cite{AKT12}.

Algebras related to dynamical systems 
(sometimes appearing as ``analytic crossed products'' in older papers) 
were considered by McAsey-Muhly-Saito \cite{MMS79}, Katavolos-Power \cite{KP02} and Kastis-Power \cite{KPo15}.
One-variable systems were further examined by the second author \cite{Kak09}. 
His work was extended by Helmer \cite{Hel14} to the much broader context of Hardy algebras of W*-correspondences in the sense of Muhly-Solel \cite{MS04}, and by Peligrad \cite{Pel14} to flows on von Neumann algebras.
Essential properties of the algebras of \cite{Kak09} were explored by Hasegawa \cite{Has15}.

The term of ``analytic crossed products'' has now been replaced by that of ``semicrossed products''.
In the last fifty years there has been a systematic approach, especially for their norm-closed variants.
The list of references is substantially long to be included here and the reader may refer to \cite{DFK13}.
We follow the work of the second author with Peters \cite{KPe15} and with Davidson and Fuller \cite{DFK14} and we interpret \emph{a} semicrossed product as an algebra densely spanned by generalized analytic polynomials subject to a set of covariance relations.
From the study in \cite{DFK14} it appears that semicrossed products over $\bF_+^d$ and $\bZ_+^d$ are the most tractable as the semigroups are finitely generated.
Therefore it is natural to examine their w*-closed variants, i.e. the \emph{w*-semicrossed products} in the sense of \cite{Kak09}.
These algebras arise through a Fock construction and in this paper we study the reflexivity and the bicommutant property for this specific representation.

Additional motivation comes from the recent results of Helmer \cite{Hel14}.
An application of his results shows reflexivity of semicrossed products of Type II or III factors over $\bF_+^d$.
With some modifications the arguments of \cite{Hel14} apply for Type II or III factors over $\bZ_+^d$.
Here we wish to conclude this programme by considering endomorphisms of $\B(\H)$.
Thus we focus on actions of $\bF_+^d$ or $\bZ_+^d$ such that each generator is implemented by a Cuntz family.
However we do not restrict just on $\B(\H)$.
There exists a plethora of dynamics implemented by Cuntz families appearing previously in the works of Laca \cite{Lac93}, Courtney-Muhly-Schmidt \cite{CMS12} and the second author with Peters \cite{KPe15}.
They arise naturally and form generalizations of the Cuntz-Krieger odometer (Examples \ref{E:odometer}).

We underline that our setting accommodates $\bZ_+^d$-actions where the generators $\al_{\Bi}$ are implemented by unitaries but those may not lift to a unitary action of $\bZ_+^d$, i.e. the unitaries implementing the actions may not commute.
For example any two commuting automorphisms over $\B(\H)$ are implemented by two unitaries that satisfy a Weyl's relation and may not commute (see Example \ref{E:Weyl}).
By using results of Laca \cite{Lac93} we are able to determine when an automorphism of $\B(\H)$ commutes with specific endomorphisms induced by two Cuntz isometries (see Examples \ref{E:com Cuntz 1} and \ref{E:com Cuntz 2}).

Our main results on reflexivity appear in Corollaries \ref{C:cun ref} and \ref{C:com ref} and are summarized in the following statement.
If $n_i$ is the multiplicity of the Cuntz family implementing the $i$-th generator of the action then we define
\[
N := \sum_{i=1}^d n_i \text{ for } \bF_+^d\text{-systems} \qand M := \prod_{i=1}^d n_i \text{ for } \bZ_+^d\text{-systems}
\] 
for the \emph{capacity} of the systems.

\begin{theorem}[Corollary \ref{C:cun ref}, Corollary \ref{C:com ref}]
Let $\al$ be an action of $\bF_+^d$ or $\bZ_+^d$ on $\A$ such that each generator of $\al$ is implemented by a Cuntz family.
If the capacity of the system is greater than $1$ then the resulting w*-semicrossed products are (hereditarily) hyper-reflexive.
If the capacity of the system is $1$ and $\A$ is reflexive then the resulting w*-semicrossed products are reflexive.
\end{theorem}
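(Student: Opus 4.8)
The plan is to split the theorem according to the capacity and to reduce both cases to the general reflexivity criterion for w*-semicrossed products implemented by uniformly bounded families of invertible row operators (which is the main technical result of the paper, applied here with the row operators being Cuntz isometries, hence of norm one). First I would observe that a Cuntz family of multiplicity $n_i$ is in particular a row isometry, so the induced endomorphism $\al_{\Bi}$ is implemented by an invertible row operator in the appropriate sense, and the family $\{\al_w\}$ is uniformly bounded. Thus for $\bF_+^d$-systems the w*-semicrossed product $\A \ol{\times}_\al \bF_+^d$ sits inside (a corner of) an ampliation of $\A$ twisted by the Fock representation, and similarly for $\bZ_+^d$.

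For the high-capacity case ($N>1$ or $M>1$), the key point is that the Fock space carries two isometries with orthogonal ranges in the commutant of the w*-semicrossed product: concretely, when $N \geq 2$ one can manufacture such isometries from the creation operators on the free Fock space (and in the $\bZ_+^d$ case from the creation operators on the appropriate symmetric/quotient Fock module, using $M \geq 2$). Once that is in place, Bercovici's theorem \cite{Ber98} — quoted in the introduction — immediately yields hyper-reflexivity, and the \emph{hereditary} (i.e. ``for every ampliation'') strengthening comes for free because ampliating does not destroy the pair of orthogonal isometries in the commutant. So this half of the statement is essentially a packaging of the Fock-space structure plus \cite{Ber98}, and I would expect it to be short.

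The capacity-one case is the substantive one and the main obstacle. If $N=1$ then $d=1$ and the single generator is implemented by a single Cuntz isometry (multiplicity one), i.e. by a unitary; if $M=1$ then every $n_i=1$, so each generator of the $\bZ_+^d$-action is implemented by a unitary $u_\Bi$, though the $u_\Bi$ need not commute (this is exactly the Weyl-relation phenomenon flagged in the introduction). In this regime there is no pair of orthogonal isometries and Bercovici's theorem does not apply; instead one must run the hands-on reflexivity argument. The strategy I would follow is the classical Sarason-type scheme adapted to this setting: represent the w*-semicrossed product on $\ell^2(\mathbb{G}) \otimes \H$ (with $\mathbb{G} = \bF_+^d$ or $\bZ_+^d$), take an operator $T$ leaving invariant every invariant subspace of the algebra, use the invariant subspaces of the ``coordinate'' form $\{0\}\oplus\cdots$ to show $T$ is lower-triangular with respect to the $\mathbb{G}$-grading, use the invariant subspaces coming from graphs of intertwiners to identify the diagonal entries, and finally invoke reflexivity of $\A$ itself to place each Fourier coefficient of $T$ in $\A$ (suitably twisted by the $\al$-action). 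Summing the Fourier series back up — justified by a Cesàro/Fejér averaging on $\mathbb{G}$, which is where uniform boundedness of $\{\al_w\}$ and the fact that the implementing operators are \emph{unitaries} (not merely invertible) are used to control norms — shows $T$ lies in the w*-semicrossed product.

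**Main difficulty.** The hardest part will be the Fourier-coefficient analysis in the capacity-one case when $\mathbb{G}=\bZ_+^d$ and the implementing unitaries $u_\Bi$ do not commute: one has to keep careful track of the twisting cocycle (the Weyl-type scalars relating $u_\Bi u_\Bj$ and $u_\Bj u_\Bi$) when reassembling $T$ from its diagonal data, and show that the reassembled element genuinely satisfies the covariance relations defining the w*-semicrossed product rather than some deformation of them. I would expect this to be handled by checking the covariance relations on the dense subalgebra of generalized analytic polynomials and then passing to the w*-closure, with the norm bounds from uniform boundedness ensuring the w*-limit stays inside the algebra.
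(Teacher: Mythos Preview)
Your high-capacity $\bF_+^d$ sketch lands in the right place (Bercovici's theorem) but by a slightly different route from the paper. The paper first establishes a similarity $\scp{\B(\H)}{\al}{\L_d}\simeq \B(\H)\,\ol\otimes\,\L_N$ (Theorem~\ref{T:ue LN}) and applies Bercovici to the tensor product, whereas you propose to locate the orthogonal isometries directly in the commutant of the semicrossed product. In the Cuntz case your idea can be made to work, since $s_{i,j}\otimes\Br_i\in(\scp{\A}{\al}{\L_d})'$ are isometries with pairwise orthogonal ranges whenever $N\ge 2$ --- but note that the bare creation operators $I\otimes\Br_i$ are \emph{not} in the commutant (they fail to commute with $\ol\pi(a)$), so ``from the creation operators on the Fock space'' alone is not enough.

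There are three genuine gaps.

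\smallskip\noindent\textbf{(1)} ``Hereditarily'' here does not mean ``for every ampliation''; it means every w*-closed subalgebra (indeed subspace) is hyper-reflexive. For subalgebras this follows because their commutants still contain the two orthogonal isometries, so Bercovici applies again; for general w*-closed subspaces one needs in addition the $\bA_1(1)$ property of $\B(\H)\,\ol\otimes\,\L_N$ (Davidson--Pitts) together with the Kraus--Larson/Davidson transfer. Your ampliation remark does not deliver this.

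\smallskip\noindent\textbf{(2)} For $\bZ_+^d$ with $M\ge 2$ your plan does not work as stated: on $\ell^2(\bZ_+^d)$ the creation operators commute and do not have orthogonal ranges, so no pair of orthogonal isometries arises from the Fock side. The paper's mechanism is the disintegration of Proposition~\ref{P:disintegrate}: write $\scp{\A}{\al}{\bZ_+^d}$ as an iterated one-variable semicrossed product, arrange a direction with $n_i\ge 2$ as the outermost layer $\scp{\B}{\wh\al_{\Bi}}{\bZ_+}$, and apply the $\bF_+^1$ result (capacity $n_i\ge 2$) to that layer. You are missing this reduction.

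\smallskip\noindent\textbf{(3)} Your ``main difficulty'' in the capacity-one case is a red herring. The semicrossed product $\scp{\A}{\al}{\bZ_+^d}$ is defined purely through $\al$ via $\pi(a)L_{\Bi}=L_{\Bi}\pi\al_{\Bi}(a)$; the implementing unitaries $u_{\Bi}$ and any Weyl scalar relating $u_{\Bi}u_{\Bj}$ to $u_{\Bj}u_{\Bi}$ never enter, since $\al_{\Bi}\al_{\Bj}=\al_{\Bj}\al_{\Bi}$ regardless. There is no cocycle to track when reassembling $T$. The paper does not carry out a fresh Sarason-type argument here at all: it invokes the one-variable result \cite[Theorem~2.9]{Kak09} for $\scp{\A}{\al_{\bo 1}}{\bZ_+}$ and then iterates $d$ times through the same disintegration (Proposition~\ref{P:disintegrate}). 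Your direct plan could be made to work, but it is more labour than needed and the obstacle you anticipate is not there.
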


In fact we manage to tackle actions implemented by invertible row operators that satisfy a uniform bound hypothesis (Theorem \ref{T:cun ref}, Theorem \ref{T:com ref}). 
We term these as \emph{uniformly bounded spatial actions}.

The strategy we follow for $\bF_+^d$-systems is to realize the w*-semicrossed product as a subspace of $\B(\H) \, \ol{\otimes} \, \L_N$ (Theorem \ref{T:ue LN}).
Here $\L_N$ denotes the free semigroup algebra generated by the Fock representation for the capacity $N$ of the system.
Notice that even when $d=1$ we manage to pass to (a subspace of) the tensor product $\B(\H) \, \ol{\otimes} \, \L_{n_1}$.
When $N \geq 2$, $\B(\H) \, \ol{\otimes} \, \L_N$ is hyper-reflexive and has property $\bA_1(1)$ by \cite{Ber98, DP98}.
Hence by results of Kraus-Larson \cite{KL86} and Davidson \cite{Dav87} it follows that $\B(\H) \, \ol{\otimes} \, \L_N$ is hereditarily hyper-reflexive.
When $N= 1$ then the result follows from \cite{Kak09}.
For the $\bZ_+^d$-cases we decompose the w*-semicrossed product along the directions (Proposition \ref{P:disintegrate}) and apply similar arguments for the last factor of such a decomposition.

The passage inside $\B(\H) \, \ol{\otimes} \, \L_N$ relies on the strange phenomenon that every system on $\B(\H)$ given by a Cuntz family of multiplicity $n_i$ is equivalent to the trivial action of $\bF_+^{n_i}$ on $\B(\H)$.
This was first observed by the second author with Katsoulis \cite{KK12} and with Peters \cite{KPe15}.
Surprisingly there is a strong connection with the fact that module sums over the Cuntz algebra do not attain a unique basis.
Gipson \cite{Gip14} attacks this problem effectively by introducing the notion of the invariant basis number.

In combination with \cite{Hel14} we encounter systems over any factor and automorphic systems over maximal abelian selfadjoint algebras (Corollaries \ref{C:masa ref}, \ref{C:factor ref}, \ref{C:masa com ref} and \ref{C:factor com ref}).
It appears that the arguments of Helmer \cite{Hel14} treat a wider class of dynamical systems.
We include this information in Theorems \ref{T:Hel} and \ref{T:Hel com}.
Alongside this we translate his reflexivity proof in our context.

We mention that our reflexivity results can be acquired without referring to hyper-reflexivity, when $\A$ is reflexive.
To this end we provide a straightforward proof of that $\B(\H) \, \ol{\otimes} \, \L_d$ is reflexive (Proposition \ref{P:spatial}).
The line of reasoning resembles to \cite{Kak09, KP04} and may find applications to other settings, e.g. algebras over weighted graphs of Kribs-Levene-Power \cite{KLP16}.

By applying \cite{KL86, Dav87} we get that the hyper-reflexivity constant in Theorems \ref{T:cun ref} and \ref{T:com ref} is at most $7 \cdot K^4$ when $N, M \geq 2$ (where $K$ is the uniform bound for the invertible row operators).
However it can be decreased further to $3 \cdot K^4$.
This follows by analyzing their commutant.
In each case we identify the commutant with a twisted w*-semicrossed product over the commutant (Theorems \ref{T:com cun} and \ref{T:com com}).
Such algebras were studied in the norm context by the second author with Peters \cite{KP14}.
They form the nonselfadjoint analogues of the twisted C*-crossed product introduced earlier by Cuntz \cite{Cun81}. 
The method of twisting for w*-closed algebras was explored for automorphic $\bZ_+$-actions in \cite{Kak09} and applies also for $\bZ_+^d$-actions here.
Twisting twice brings us back to the w*-semicrossed product over the bicommutant.
Therefore we obtain Corollaries \ref{C:bicom cun} and \ref{C:bicom com} that can be summarized in the following statement.

\begin{theorem}[Corollary \ref{C:bicom cun}, Corollary \ref{C:bicom com}]
Let $\al$ be an action of $\bF_+^d$ or $\bZ_+^d$ on a w*-closed algebra $\A$.
Suppose that each generator of $\al$ is implemented by a Cuntz family.
Then $\A$ has the bicommutant property if and only if any (and thus all) of the resulting w*-semicrossed products does so.
\end{theorem}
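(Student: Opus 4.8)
The plan is to compute the bicommutant of the w*-semicrossed product explicitly and then to recover $\A''$ from it by means of a gauge expectation. Throughout, write $\fZ_\B$ for the w*-semicrossed product built from the given dynamical data but with the ambient algebra replaced by a w*-closed algebra $\B$, realised through the Fock construction on a Hilbert space $\K$; thus $\fZ := \fZ_\A$ is the w*-closed algebra generated by a w*-homeomorphic copy of $\A$ sitting at level $0$ together with the (Cuntz, or more generally invertible row) operators implementing $\al$.

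The first step is to invoke Theorems \ref{T:com cun} and \ref{T:com com}, which identify the commutant $\fZ'$ with a \emph{twisted} w*-semicrossed product over the commutant $\A'$, the twisting $2$-cocycle being read off from the row operators implementing $\al$. The crucial observation is that this identification is spatial and uses no special property of $\A$, so it applies verbatim with $\A'$ in place of $\A$ and with the inverse cocycle. Applying it a second time identifies $\fZ'' = (\fZ')'$ with the twisted w*-semicrossed product over $(\A')' = \A''$, now twisted by the pointwise product of the two cocycles. That product cocycle is a coboundary, so the double twist is trivial and the algebra is nothing but the plain w*-semicrossed product $\fZ_{\A''}$ of the original data over $\A''$. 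Since every step is carried out inside $\B(\K)$ and is compatible with the inclusion $\A \subseteq \A''$, we obtain $\fZ = \fZ_\A \subseteq \fZ_{\A''} = \fZ''$ as w*-closed subalgebras of $\B(\K)$. For $\bZ_+^d$-systems one first disintegrates along the $d$ coordinates via Proposition \ref{P:disintegrate} and performs the twisting direction by direction; the combinatorics is heavier but the argument is unchanged.

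Granting $\fZ'' = \fZ_{\A''}$, the equivalence follows quickly. If $\A = \A''$ then trivially $\fZ = \fZ_\A = \fZ_{\A''} = \fZ''$. Conversely, the Fock construction carries a gauge action, and integrating it produces a w*-continuous conditional expectation $E_0$ of $\B(\K)$ whose restriction to $\fZ_\B$ is the expectation onto the level-$0$ copy of $\B$, for every w*-closed algebra $\B$; hence $E_0(\fZ_\A) = \A$ and $E_0(\fZ_{\A''}) = \A''$ under the identification of each algebra with its level-$0$ copy. Thus if $\fZ = \fZ''$, i.e. $\fZ_\A = \fZ_{\A''}$, then applying $E_0$ gives $\A = \A''$. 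The phrase ``any (and thus all)'' is then justified because the argument applies to an arbitrary one of the w*-semicrossed products attached to the system; equivalently, the realisation results of the preceding sections exhibit the various w*-semicrossed products of a fixed system as mutually w*-homeomorphic via complete isometries, so the bicommutant property transfers among them.

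The step I expect to require the most care is the second application of the commutant theorem: one must check that composing the twist arising in passing from $\A$ to $\A'$ with the twist arising in passing from $\A'$ to $\A''$ yields a cohomologically trivial cocycle, and that the spatial identification delivered by the theorem is the one induced by the inclusion $\A \hookrightarrow \A''$, so that the chain $\fZ_\A \subseteq \fZ_{\A''} = \fZ''$ is genuinely an inclusion of subalgebras of $\B(\K)$ rather than merely an abstract isomorphism. Once the twisting calculus of Theorems \ref{T:com cun} and \ref{T:com com} is in place this reduces to a cocycle identity; the remaining ingredients — the gauge expectation $E_0$ and, for the abelian semigroup, the disintegration of Proposition \ref{P:disintegrate} — are already available from the earlier parts of the paper.
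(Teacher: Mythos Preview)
Your overall strategy matches the paper's: use Theorem~\ref{T:com cun} (and Theorem~\ref{T:com com}) to identify $(\scp{\A}{\al}{\L_d})'' = \scp{\A''}{\al}{\L_d}$, and then read off $\A$ from the semicrossed product. Two points are worth cleaning up.

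First, the cocycle apparatus is superfluous and slightly misleading. Theorem~\ref{T:com cun} is not a single statement to be applied twice by substituting $\A'$ for $\A$; it contains \emph{both} halves already, proved separately: $(\scp{\A}{\al}{\L_d})' = \scp{\A'}{u}{\R_d}$ and $(\scp{\A'}{u}{\R_d})' = \scp{\A''}{\al}{\L_d}$. Composing these gives $(\scp{\A}{\al}{\L_d})'' = \scp{\A''}{\al}{\L_d}$ directly, with no coboundary to check. Your worry about ``composing the twists'' disappears once you quote the second equality verbatim.

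Second, the paper recovers $\A$ not via the gauge expectation but via compression to the $(\mt,\mt)$-entry: the map $T \mapsto T_{\mt,\mt}$ sends $\scp{\B}{\al}{\L_d}$ onto $\B$ for any w*-closed $\B$ on which the system is defined, so equality of the semicrossed products forces $\A = \A''$. Your gauge expectation $E_0$ lands in $\ol\pi(\B)$ rather than $\B$, so you still need the $(\mt,\mt)$-compression (or injectivity of $\ol\pi$) to finish; the paper's route is one step shorter.

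Finally, your closing justification that the bicommutant property ``transfers'' among the various w*-semicrossed products because they are ``mutually w*-homeomorphic via complete isometries'' is not valid: the bicommutant property is spatial and does not pass through abstract isomorphisms. The correct justification is the one you give first --- the same argument runs in parallel for each of the algebras in Corollary~\ref{C:bicom cun} (left, right, and the trivial-action tensor products), each being linked to item~(i) separately.
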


For our analysis we use a generalized Fej\'{e}r Lemma; details are given in Section \ref{S:pre}.
For directly showing the reflexivity of $\B(\H) \, \ol{\otimes} \, \L_d$ we use finite dimensional cyclic modules.
In Section \ref{S:dyn sys} we define the algebras that play the role of the w*-semicrossed products.
However the important feature in $\bF_+^d$ is the separation between left and right lower triangular operators.
Obviously this separation is redundant for $\bZ_+^d$.
The results about the commutant and reflexivity appear in Sections \ref{S:commutant} and \ref{S:reflexivity}, respectively.

We underline that $\bF_+^d$ and $\bZ_+^d$ are tractable due to their simple structure.
Another interesting class of algebras is formed by systems over the Heisenberg semigroup \cite{AKT12}.
We leave this class for a subsequent project.

\begin{acknow}
This paper is part of the Ph.D. thesis of the first author.
The first author acknowledges support from EPSRC for his Ph.D. studies (project Ref. No. EP/M50791X/1).

The second author would like to thank the Isaac Newton Institute for Mathematical Sciences, Cambridge, for support and hospitality during the programme ``Operator algebras: subfactors and their applications'' where work on this paper was undertaken. 
This work was supported by EPSRC grant no EP/K032208/1.

The authors would like to thank Matthew Kennedy for useful discussions on the $\bA_1$ property and Masaki Izumi for constructive discussions on commuting endomorphisms of $\B(\H)$. 
\end{acknow}

\section{Preliminaries}\label{S:pre}

For $d \in \bZ_+ \cup \{\infty\}$ we write $[d]:= \{1, \dots, d\}$ so that $[\infty] = \bZ_+$.
We highlight that $d$ is always finite in $\bZ_+^d$, but $d \in \{1, 2, \dots, \infty\}$ in $\bF_+^d$.
We will write $\mathfrak{f}_\mu$ for a symbol $\mathfrak{f}$ and a word $\mu = \mu_m \dots \mu_1 \in \bF_+^d$ to denote 
\[
\mathfrak{f}_\mu = \mathfrak{f}_{\mu_m} \cdots \mathfrak{f}_{\mu_1}.
\]
To avoid any ambiguity as to what $\mathfrak{f}_\mu^*$ means we use the notation $(\mathfrak{f}_\mu)^*$.

We use capital letters for operators acting on tensor product Hilbert spaces and small letters for operators acting on their factors.
This reduces considerably the usage of parentheses (which we omit) when the operators act on elementary tensor vectors.

Sums over an infinite family of operators are taken in the strong operator topology with respect to the net over finite subsets.
For the algebras $\A_1 \subseteq \B(\H_1)$ and $\A_2 \subseteq \B(\H_2)$ we write $\A_1 \, \ol{\otimes} \, \A_2$ for the w*-closure of their algebraic tensor product in $\B(\H_1 \otimes \H_2)$.

\subsection{Free semigroup operators}

We endow $\bF_+^d$ with a (left) partial ordering given by
\[
\nu \leq_l \mu \text{ if there exists } z \in \bF_+^d \text{ such that } \mu = z \nu.
\]
We want to keep track of whether we concatenate on the left or on the right and we also consider the right version
\[
\nu \leq_r \mu \text{ if there exists } z \in \bF_+^d \text{ such that } \mu = \nu z.
\]
For a word $\mu = \mu_k \dots \mu_1$ we write $\ol\mu := \mu_1 \dots \mu_k$ for the reversed word of $\mu$.
We define the left and right creation operators on $\ell^2(\bF_+^d)$ by
\[
\Bl_\mu e_w = e_{\mu w} \qand \Br_\nu e_w = e_{w \ol{\nu}}.
\]
Notice here that $\Br_\nu$ is the product $\Br_{\nu_{|\nu|}} \cdots \Br_{\nu_1}$ and it is the reverse notation of what is used in \cite{DP99}.
We write
\[
\L_d := \ol{\alg}^{\textup{wot}}\{\Bl_\mu \mid \mu \in \bF_+^d \}
\qand
\R_d := \ol{\alg}^{\textup{wot}}\{\Br_\mu \mid \mu \in \bF_+^d \}.
\]
Fej\'{e}r's Lemma (that follows) implies that there is no difference in considering the w*-topology instead, i.e.
\[
\L_d = \ol{\alg}^{\textup{w*}}\{\Bl_\mu \mid \mu \in \bF_+^d \}
\qand
\R_d = \ol{\alg}^{\textup{w*}}\{\Br_\mu \mid \mu \in \bF_+^d \}.
\]
The Fourier co-efficients in the w*- and the wot-setting coincide.

\begin{definition}
For $n \in \bZ_+ \cup \{\infty\}$ we say that a row operator $u = [u_1 \dots u_n \dots ] \in \B(\H \otimes \ell^2(n), \H)$ is \emph{invertible} if there exists a column operator $v = [v_1 \dots v_n \dots]^t \in \B(\H, \H \otimes \ell^2(n))$ such that
\[
v u = I_{\H \otimes \ell^2(n)} \qand \sum_{i \in [n]} u_i v_i = I_\H.
\]
\end{definition}

In particular we have that $v_i u_j = \de_{i,j} I_\H$
and that $\| \sum_{i \in F} u_i v_i \| \leq 1$ for any finite $F \subseteq [n]$.
Indeed if $P_F$ is the projection on $\H_F : = \ol{\spn} \{ \xi \otimes e_{i} \mid i \in F\}$ then
\[
\| \sum_{i \in F} u_i v_i h \| = \| \sum_{i \in [n]} u_i v_i P_F h \| = \nor{P_F h} = \nor{h}
\]
for all $h \in \H_F$.
We will consider actions implemented by invertible row operators subject to a uniform bound.

\begin{definition}\label{D:ubo defn}
Let $\{u_i\}_{i \in [d]}$ be a family of invertible row operators such that $u_i = [u_{i, j_i}]_{j_i \in [n_i]}$.
We say that $\{u_i\}_{i \in [d]}$ is \emph{uniformly bounded} if the operators
\[
\wh{u}_{\mu_m \dots \mu_1} = u_{\mu_m} \cdot (u_{\mu_{m-1}} \otimes I_{[n_{\mu_m}]}) \cdots (u_{\mu_1} \otimes I_{[n_{\mu_m} \cdots n_{\mu_2}]})
\]
and their inverses 
\[
\wh{v}_{\mu_1 \dots \mu_m} = (v_{\mu_1} \otimes I_{[n_{\mu_m} \cdots n_{\mu_2}]}) \cdots (v_{\mu_{m-1}} \otimes I_{[n_{\mu_m}]}) \cdot v_{\mu_m}
\]
are uniformly bounded with respect to $\mu_m \dots \mu_1 \in \bF_+^d$.
\end{definition}

Notice that if $n_i = 1$ for all $i \in [d]$ then $\wh{u}_{\mu_m \dots \mu_1} = u_{\mu_m} \dots u_{\mu_1} = u_\mu$.
In fact $\wh{u}_{\mu_m \dots \mu_1}$ is the row operator of all possible products of the $u_{\mu_i, j_{\nu_i}}$. 
Let us exhibit this construction with an example for finite multiplicities.

\begin{example}
Let the row operators $u_1$ and $u_2$ with $n_1 = 2$ and $n_2=3$.
Then the operator $\wh{u}_{12}$ is given by
\begin{align*}
\wh{u}_{12} 
& = 
u_1 \cdot (u_2 \otimes I_{n_1})
=
[u_{1,1} \;\; u_{1,2}] \cdot
\begin{bmatrix}
[u_{2,1} \;\; u_{2,2} \;\; u_{2,3}] & \\
& [u_{2,1} \;\; u_{2,2} \;\; u_{2,3}]
\end{bmatrix} \\
& =
[u_{1,1} u_{2,1} \;\; u_{1,1} u_{2,2} \;\; u_{1,1} u_{2,3}
\; u_{1,2} u_{2,1} \;\; u_{1,2} u_{2,2} \;\; u_{1,2} u_{2,3}].
\end{align*}
\end{example}

Similar remarks hold for $\bZ_+^d$.
Following the notation of \cite{DFK14} we write $\Bi$ for the elements in the canonical basis of $\bZ_+^d$ and
\[
\un{n} = (n_1, \dots, n_d) = \sum_{i=1}^d n_i \Bi
\]
for the elements in $\bZ_+^d$.
We use the same notation for elements in $\bR^d$.

The positive cone $\bZ_+^d$ induces a partial order in $\bZ^d$ in the sense that
\[
\un{n} \leq \un{m} \text{ if there exists } \un{z} \in \bZ_+^d \text{ such that } \un{m} = \un{z} + \un{n}.
\]
Due to commutativity there is no distinction between a left and a right version.
We define the creation operators in $\ell^2(\bZ_+^d)$ by $\Bl_{\un{m}} e_{\un{w}} = e_{\un{m} + \un{w}}$ and we write
\[
\bH^\infty(\bZ_+^d) := \ol{\alg}^{\textup{wot}}\{\Bl_{\un{m}} \mid \un{m} \in \bZ_+^d \}.
\]
Fej\'{e}r's Lemma (that follows) for $\bH^\infty(\bZ_+^d)$ implies that there is no difference in considering the w*-topology instead of the weak operator topology.

\subsection{Lower triangular operators}

We fix a Hilbert space $\H$ and consider $\H \otimes \ell^2(\bF_+^d)$.
Then $\B(\H \otimes \ell^2(\bF_+^d))$ admits a point-w*-continuous action induced by the unitaries
\[
U_s \xi \otimes e_w = e^{i|w|s} \xi \otimes e_w \foral \xi \otimes e_w,
\]
with $s \in [-\pi, \pi]$.
For $T \in \B(\H \otimes \ell^2(\bF_+^d))$ and $m \in \bZ_+$ \emph{the $m$-th Fourier coefficient} is then given by
\[
G_m(T) := \frac{1}{2\pi} \int_{-\pi}^{\pi} U_s T U_s^* e^{-ims} ds
\]
where the integral is considered in the w*-topology of $\B(\H \otimes \ell^2(\bF_+^d))$ for the Riemann sums.
An application of Fej\'{e}r's Lemma implies that the Cesaro sums
\[
\si_{n+1}(T) := \sum_{k=-n}^n (1 - \frac{|k|}{n+1}) G_k(T)
\]
converge to $T$ in the w*-topology.
For $T \in \B(\H \otimes \ell^2(\bF_+^d))$ we write $T_{\mu, \nu} \in \B(\H)$ for the $(\mu,\nu)$-entry given by
\[
\sca{T_{\mu,\nu} \xi, \eta} = \sca{T \xi \otimes e_\nu, \eta \otimes e_\mu} \foral \xi, \eta \in \H.
\]

\begin{definition}
An operator $T \in \B(\H \otimes \ell^2(\bF_+^d))$ is a \emph{left lower triangular operator} if $T_{\mu, \nu} = 0$ whenever $\nu \not<_l \mu$.
In a dual way $T \in \B(\H \otimes \ell^2(\bF_+^d))$ is a \emph{right lower triangular operator} if $T_{\mu, \nu} = 0$ whenever $\nu \not<_r \mu$.
\end{definition}

The next proposition shows that the Fourier co-efficients induce a graded structure on lower triangular operators.
For $\mu, \nu \in \bF_+^d$ we write
\[
L_\mu := I_\H \otimes \Bl_\mu \qand R_\nu := I_\H \otimes \Br_\nu.
\]
From now on we write $p_w$ for the projection of $\ell^2(\bF_+^d)$ to $e_w$.

\begin{proposition}\label{P:lower triangular}
If $T$ is a left lower triangular operator in $\B(\H \otimes \ell^2(\bF_+^d))$ then
\[
G_m(T) 
= 
\begin{cases} 
\sum_{|\mu| = m} \sum_{w \in \bF_+^d} L_\mu (T_{\mu w, w} \otimes p_w) & \text{ if } m \geq 0, \\
0 & \text{ if } m <0.
\end{cases}
\]
In a dual way if $T$ is a right lower triangular operator in $\B(\H \otimes \ell^2(\bF_+^d))$ then
\[
G_m(T) 
= 
\begin{cases} 
\sum_{|\mu| = m} \sum_{w \in \bF_+^d} R_\mu (T_{w \ol{\mu}, w} \otimes p_w) & \text{ if } m \geq 0, \\
0 & \text{ if } m <0.
\end{cases}
\]
\end{proposition}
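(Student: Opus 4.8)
The plan is to reduce the statement to a computation of matrix entries and then to rearrange the resulting sum along the orthogonal decomposition $\H \otimes \ell^2(\bF_+^d) = \bigoplus_{w \in \bF_+^d} \H \otimes e_w$. First I would record the effect of conjugation by $U_s$ on entries: from $U_s \xi \otimes e_w = e^{i|w|s}\xi \otimes e_w$ one gets, for arbitrary $T \in \B(\H \otimes \ell^2(\bF_+^d))$ and $\xi, \eta \in \H$,
\[
\sca{U_s T U_s^* \, \xi \otimes e_\nu, \eta \otimes e_\mu} = e^{i(|\mu|-|\nu|)s} \sca{T_{\mu,\nu}\xi, \eta}.
\]
Since $G_m(T)$ is defined as a w*-integral, its Riemann sums converge against every normal functional, in particular against the vector functionals $\sca{\,\cdot\; \xi \otimes e_\nu, \eta \otimes e_\mu}$; pulling such a functional inside the integral and using $\tfrac{1}{2\pi}\int_{-\pi}^{\pi} e^{i(|\mu|-|\nu|-m)s}\,ds = \de_{|\mu|-|\nu|,\,m}$ yields that $G_m(T)_{\mu,\nu} = T_{\mu,\nu}$ if $|\mu|-|\nu| = m$ and $G_m(T)_{\mu,\nu} = 0$ otherwise, with $\|G_m(T)\| \le \|T\|$.

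Next I would impose that $T$ is left lower triangular. Then $G_m(T)_{\mu,\nu}$ can be nonzero only when simultaneously $|\mu|-|\nu| = m$ and $\nu <_l \mu$; the latter forces $\mu = \mu' \nu$ for a unique word $\mu' \in \bF_+^d$, and then $|\mu'| = |\mu| - |\nu| = m$. In particular, for $m < 0$ there are no such pairs, so $G_m(T) = 0$, which is the second branch of the formula.

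For $m \ge 0$ I would decompose $G_m(T) = \sum_{w} G_m(T)(I_\H \otimes p_w)$, the sum converging in the strong operator topology since $\sum_w I_\H \otimes p_w = I$. Using the entry computation, for fixed $w$ the operator $G_m(T)(I_\H \otimes p_w)$ annihilates $\H \otimes e_{w'}$ for $w' \ne w$ and sends $\xi \otimes e_w$ to $\sum_{|\mu| = m}(T_{\mu w, w}\xi) \otimes e_{\mu w}$; this last sum converges in norm because the vectors $e_{\mu w}$ are orthonormal and $\sum_{|\mu|=m}\|T_{\mu w,w}\xi\|^2 \le \|T(\xi \otimes e_w)\|^2$, a partial column of $T$. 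Hence $G_m(T)(I_\H \otimes p_w) = \sum_{|\mu| = m} L_\mu (T_{\mu w, w} \otimes p_w)$, the sum over $|\mu| = m$ converging strongly since the $L_\mu$ have pairwise orthogonal ranges (distinct words of the same length are not prefixes of a common word). Summing over $w \in \bF_+^d$ then gives exactly $G_m(T) = \sum_{|\mu| = m}\sum_{w} L_\mu(T_{\mu w, w} \otimes p_w)$.

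The right lower triangular case is the mirror argument: the entry formula for $G_m(T)$ is unchanged, right lower triangularity instead forces $\mu = \nu z$ with $|z| = m$, and writing $z = \ol{\mu}$ and using $\Br_\mu e_w = e_{w\ol{\mu}}$ one obtains $G_m(T)(I_\H \otimes p_w) = \sum_{|\mu| = m} R_\mu (T_{w\ol{\mu}, w} \otimes p_w)$, which sums over $w$ to the stated expression. I expect no real obstacle here beyond bookkeeping with the length grading; the only points I would be careful about are the interchange of the w*-integral with the vector functionals (immediate from the definition of the integral) and the convergence of the rearranged double sums, which I would handle via the orthogonal decomposition $\bigoplus_w \H \otimes e_w$ together with the square-summability of columns of a bounded operator and the orthogonality of the ranges of the $L_\mu$ (resp.\ $R_\mu$).
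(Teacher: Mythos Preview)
Your proposal is correct and follows essentially the same approach as the paper: both compute the matrix entries $G_m(T)_{\mu,\nu} = \de_{|\mu|-|\nu|,m}\,T_{\mu,\nu}$ via the integral, use lower triangularity to kill the $m<0$ case, and then identify the remaining entries with those of the displayed sum. The only difference is organizational: the paper verifies equality by computing $\sca{\,\cdot\,\xi\otimes e_\nu,\eta\otimes e_{\nu'}}$ on both sides directly, whereas you first split off the column $G_m(T)(I_\H\otimes p_w)$ and discuss the strong convergence of the sums explicitly, which is a welcome bit of extra care.
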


\begin{proof}
We will consider just the left case.
The right case is proven in a similar way.
Fix $\nu, \nu' \in \bF_+^d$ and $\xi, \eta \in \H$.
Then we have that
\begin{align*}
\sca{G_m(T) \xi \otimes e_\nu, \eta \otimes e_{\nu'}}
& =
\frac{1}{2\pi} \int_{-\pi}^{\pi} \sca{T \xi \otimes e_{\nu}, \eta \otimes e_{\nu'}} e^{i(- m - |\nu| + |\nu'|)s} ds \\
& =
\de_{|\nu'|, m + |\nu|} \sca{T_{\nu', \nu} \xi, \eta}
\end{align*}
for all $T \in \B(\H \otimes \ell^2(\bF_+^d))$.
Hence $\sca{G_m(T) \xi \otimes e_\nu, \eta \otimes e_{\nu'}} = 0$ when $|\nu'| \neq m + |\nu|$.
Suppose that $T$ is in addition a left lower triangular operator.

First consider the case where $m < 0$.
If $|\nu'| = m + |\nu|$ then $|\nu'|<|\nu|$ and thus $\nu \not<_l \nu'$.
But then we get that $\sca{T_{\nu', \nu} \xi, \eta} = 0$ since $T$ is left lower triangular.
Hence $G_m(T) = 0$ when $m <0$.

Secondly for $m \geq 0$ we have that $\sca{T_{\nu', \nu} \xi, \eta} = 0$ whenever $\nu \not<_l \nu'$.
Consequently we obtain 
\begin{align*}
\sca{G_m(T) \xi \otimes e_\nu, \eta \otimes e_{\nu'}}
& =
\begin{cases} 
\sca{T_{\nu', \nu} \xi, \eta} & \text{ if } \nu \leq_l \nu' \text{ and } |\nu'| - |\nu| = m,\\
0 & \text{ otherwise}.
\end{cases}
\end{align*}
On the other hand we compute
\begin{align*}
\sum_{|\mu| = m} \sum_{w \in \bF_+^d} \sca{L_\mu (T_{\mu w, w} \otimes p_w) \xi \otimes e_\nu, \eta \otimes e_{\nu'}}
& =
\sum_{|\mu| = m} \de_{\mu\nu, \nu'} \sca{T_{\mu \nu, \nu}\xi, \eta} = \\
& \hspace{-2.2cm} =
\begin{cases} 
\sca{T_{\nu', \nu} \xi, \eta} & \text{ if } \nu \leq_l \nu' \text{ and } |\nu'| - |\nu| = m,\\
0 & \text{ otherwise},
\end{cases}
\end{align*}
and the proof is complete.
\end{proof}

Similar conclusions hold for $\B(\H \otimes \ell^2(\bZ_+^d))$ by considering the unitaries
\[
U_{\un{s}} \xi \otimes e_{\un{w}} = e^{i \sum_{i=1}^d w_i s_i} \xi \otimes e_{\un{w}} \foral \xi \otimes e_{\un{w}}
\]
for $\un{s} \in [-\pi, \pi]^d$, and the induced Fourier transform on $T \in \B(\H \otimes \ell^2(\bZ_+^d))$ given by
\[
G_{\un{m}}(T) := \frac{1}{(2\pi)^d} \int_{[-\pi,\pi]^d} U_{\un{s}} T U_{\un{s}}^* e^{-i\sum_{i=1}^d m_i s_i} d \un{s} \qfor \un{m} \in \bZ^d.
\]
This follows by extending the arguments concerning the Fourier transform on $\B(\H \otimes \ell^2)$ to the multi-dimensional case.
Alternatively one may see $G_{\un{m}}$ as the composition of appropriate inflations of $G_{m_i}$ along the directions of $\ell^2(\bZ_+^d)$.
For $T \in \B(\H \otimes \ell^2(\bZ_+^d))$ we write $T_{\un{m}, \un{n}} \in \B(\H)$ for the operator given by 
\[
\sca{T_{\un{m}, \un{n}} \xi ,\eta} = \sca{T \xi \otimes e_{\un{n}}, \eta \otimes e_{\un{m}}}.
\]

\begin{definition}
An operator $T \in \B(\H \otimes \ell^2(\bZ_+^d))$ is a \emph{lower triangular operator} if $T_{\un{m}, \un{n}} = 0$ whenever $\un{n} \not< \un{m}$.
\end{definition}

In analogy to $\bF_+^d$ we write $L_\un{m} = I_\H \otimes \Bl_{\un{m}}$ which is used for the graded structure induced by the Fourier co-efficients.
Now we write $p_{\un{w}}$ for the projection of $\ell^2(\bZ_+^d)$ to $e_{\un{w}}$.

\begin{proposition}\label{P:lower triangular com}
If $T$ is a lower triangular operator in $\B(\H \otimes \ell^2(\bZ_+^d))$ then
\[
G_{\un{m}}(T) 
= 
\begin{cases} 
\sum_{\un{w} \in \bZ_+^d} L_{\un{m}} (T_{\un{m} + \un{w}, \un{w}} \otimes p_{\un{w}}) & \text{ if } \un{m} \in \bZ_+^d, \\
0 & \text{ otherwise}.
\end{cases}
\]
\end{proposition}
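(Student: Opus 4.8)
The plan is to mimic the proof of Proposition \ref{P:lower triangular}, adapting the one-variable Fourier computation to the multidimensional torus $[-\pi,\pi]^d$, and to exploit the fact that lower triangularity on $\bZ_+^d$ is simpler to bookkeep than on $\bF_+^d$ because a word in $\bF_+^d$ refines a multi-index in $\bZ_+^d$. First I would fix $\un{n}, \un{n}' \in \bZ_+^d$ and $\xi, \eta \in \H$ and compute, directly from the definition of $G_{\un{m}}$ and of the matrix entries $T_{\un{m},\un{n}}$,
\[
\sca{G_{\un{m}}(T) \xi \otimes e_{\un{n}}, \eta \otimes e_{\un{n}'}}
=
\frac{1}{(2\pi)^d} \int_{[-\pi,\pi]^d} \sca{T \xi \otimes e_{\un{n}}, \eta \otimes e_{\un{n}'}} e^{i \sum_{i=1}^d (-m_i - n_i + n_i') s_i} d\un{s}
=
\de_{\un{n}', \un{m} + \un{n}} \sca{T_{\un{n}', \un{n}} \xi, \eta},
\]
using that $\int_{-\pi}^{\pi} e^{ik s} ds = 2\pi \de_{k,0}$ coordinatewise (alternatively, as the excerpt suggests, reading $G_{\un{m}}$ as a composition of inflated one-variable transforms $G_{m_i}$ makes this a trivial iteration of the computation already done in the proof of Proposition \ref{P:lower triangular}). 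This already shows $\sca{G_{\un{m}}(T) \xi \otimes e_{\un{n}}, \eta \otimes e_{\un{n}'}} = 0$ unless $\un{n}' = \un{m} + \un{n}$.

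Next I would split into the two cases of the statement. If $\un{m} \notin \bZ_+^d$, then some coordinate of $\un{m}$ is negative; whenever $\un{n}' = \un{m} + \un{n}$ we have $\un{n} \not\leq \un{n}'$ (since $\un{n}' - \un{n} = \un{m}$ is not in the positive cone), so $\un{n} \not< \un{m}'$ — wait, more precisely $\un{n} \not< \un{n}'$, hence $T_{\un{n}', \un{n}} = 0$ by lower triangularity, and so $G_{\un{m}}(T) = 0$. (One minor care point: the definition of lower triangular uses $T_{\un{m},\un{n}} = 0$ when $\un{n} \not< \un{m}$, strict; but if $\un{m} \notin \bZ_+^d$ then $\un{n}' = \un{m}+\un{n} = \un{n}$ is impossible, so strictness versus non-strictness is irrelevant here.) If $\un{m} \in \bZ_+^d$, then for the surviving entries $\un{n}' = \un{m} + \un{n}$ we have $\un{n} \leq \un{n}'$, and the matrix coefficient need not vanish; I would then record
\[
\sca{G_{\un{m}}(T) \xi \otimes e_{\un{n}}, \eta \otimes e_{\un{n}'}}
=
\begin{cases}
\sca{T_{\un{n}', \un{n}} \xi, \eta} & \text{ if } \un{n}' = \un{m} + \un{n},\\
0 & \text{ otherwise.}
\end{cases}
\]

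Finally I would compute the matrix entries of the claimed series and check they agree. For $\un{w} \in \bZ_+^d$, the operator $L_{\un{m}}(T_{\un{m}+\un{w},\un{w}} \otimes p_{\un{w}})$ sends $\xi \otimes e_{\un{n}}$ to $\de_{\un{w},\un{n}}\, (T_{\un{m}+\un{n},\un{n}}\xi) \otimes e_{\un{m}+\un{n}}$, since $p_{\un{w}} e_{\un{n}} = \de_{\un{w},\un{n}} e_{\un{n}}$ and $L_{\un{m}}$ shifts $e_{\un{n}} \mapsto e_{\un{m}+\un{n}}$. Summing over $\un{w}$ collapses to the single term $\un{w} = \un{n}$, giving
\[
\sum_{\un{w} \in \bZ_+^d} \sca{L_{\un{m}}(T_{\un{m}+\un{w},\un{w}} \otimes p_{\un{w}}) \xi \otimes e_{\un{n}}, \eta \otimes e_{\un{n}'}}
=
\de_{\un{n}', \un{m}+\un{n}} \sca{T_{\un{m}+\un{n},\un{n}} \xi, \eta},
\]
which matches the expression for $\sca{G_{\un{m}}(T)\xi\otimes e_{\un{n}},\eta\otimes e_{\un{n}'}}$ obtained above. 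Since matrix entries against the elementary tensors $\{\xi \otimes e_{\un{n}}\}$ determine a bounded operator, the two sides coincide, completing the proof. I do not anticipate a serious obstacle: the only thing needing a word of care is the convergence of the series $\sum_{\un{w}} L_{\un{m}}(T_{\un{m}+\un{w},\un{w}} \otimes p_{\un{w}})$ in the strong (hence w*-) topology, which holds because the summands have pairwise orthogonal ranges and pairwise orthogonal initial spaces and are uniformly bounded by $\|T\|$ — exactly as in the $\bF_+^d$ case of Proposition \ref{P:lower triangular}, so I would simply refer to that argument rather than repeat it.
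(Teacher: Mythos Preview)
Your proposal is correct and follows essentially the same approach as the paper's own proof: compute the matrix entries of $G_{\un{m}}(T)$ via the torus integral to get $\de_{\un{n}',\un{m}+\un{n}}\sca{T_{\un{n}',\un{n}}\xi,\eta}$, use lower triangularity to kill the case $\un{m}\notin\bZ_+^d$, and then match against the matrix entries of the claimed series. Your additional remarks on strict versus non-strict inequality and on convergence of the sum are fine but not needed for the argument as the paper presents it.
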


\begin{proof}
Let $T$ be a lower triangular operator.
Then for $\un{n}, \un{n'} \in \bZ_+^d$ and $\xi, \eta \in \H$ we obtain
\begin{align*}
\sca{G_{\un{m}}(T) \xi \otimes e_{\un{n}}, \eta \otimes e_{\un{n'}}}
& = \\
& \hspace{-1.5cm} =
\frac{1}{(2\pi)^d} \int_{[-\pi,\pi]^d} \sca{T \xi \otimes e_{\un{n}}, \eta \otimes e_{\un{n'}}} e^{- i \sum_{i=1}^d (m_i + n_i - n_i') s_i} d \un{s} \\
& \hspace{-1.5cm} =
\de_{\un{n'}, \un{m} + \un{n}} \sca{T_{\un{n'}, \un{n}} \xi ,\eta}.
\end{align*}
If $\un{n'} = \un{m} + \un{n}$ for $\un{m} \notin \bZ_+^d$ then there exists an $i =1, \dots, d$ such that $n_i' < n_i$.
In this case $\un{n} \not< \un{n'}$ hence $T_{\un{n'}, \un{n}} = 0$ and thus $G_{\un{m}}(T) = 0$.
On the other hand if $\un{m} \in \bZ_+^d$ then a straightforward computation gives
\begin{align*}
 \sum_{\un{w} \in \bZ_+^d} \sca{L_{\un{m}} (T_{\un{m} + \un{w}} \otimes p_{\un{w}}) \xi \otimes e_{\un{n}}, \eta \otimes e_{\un{n'}}}
& = 
\sca{T_{\un{m} + \un{n}, \un{n}} \xi \otimes e_{\un{m} + \un{n}}, \eta \otimes e_{\un{n'}}} \\
& =
\de_{\un{n'}, \un{m} + \un{n}} \sca{T_{\un{m} + \un{n}, \un{n}} \xi, \eta}
\end{align*}
and the proof is complete.
\end{proof}

\subsection{Reflexivity and the $\bA_1$-property}

The reader is addressed to \cite{Con91} for full details.
In short, let $\A$ be a unital subalgebra of $\B(\H)$.
It will be called \emph{reflexive} if it coincides with 
\[
\Alg\Lat(\A) := \{T \in \B(\H) \mid (1-P)TP = 0 \foral P \in \Lat(\A)\}.
\]
Since $\A$ is unital we get that the $\Alg\Lat(\A)$ coincides with the \emph{reflexive cover of $\A$} in the sense of Loginov-Shulman \cite{LS75}, i.e. with
\[
\Ref(\A) := \{T \in \B(\H) \mid T\xi \in \ol{\A \xi} \foral \xi \in \H\}.
\]
The algebra $\A$ is called \emph{hereditarily reflexive} if every w*-closed subalgebra of $\A$ is reflexive.
It is immediate that (hereditary) reflexivity is preserved under similarities.

A w*-closed algebra $\A \subseteq \B(\H)$ is said to have the \emph{$\bA_1$ property} if every w*-continuous linear functional on $\A$ is given by a rank one functional.
It follows by \cite{LS75} that a w*-closed algebra $\A$ is hereditarily reflexive if and only if it is reflexive and has the $\bA_1$ property.
In particular $\A$ is said to have the \emph{$\bA_1(1)$ property} if for every $\eps >0$ and every w*-continuous linear functional $\phi$ on $\A$ there are vectors $h, g \in \H$ such that $\phi(a) = \sca{ah,g}$ and $\nor{h} \nor{g} \leq (1 + \eps) \nor{\phi}$.
The origins of the $\bA_1(1)$ property can be traced to the work of Brown \cite{Bro78}.

Davidson-Pitts \cite{DP98} show that the wot-closure of the algebraic tensor product of $\B(\H)$ with $\L_d$ satisfies the $\bA_1(1)$ property, when $d \geq 2$. 
Their arguments depend on the existence of two isometries with orthogonal ranges in the commutant; thus they also apply for the tensor product of $\B(\H)$ with $\R_d$.
It follows that the tensor products with respect to the weak operator topology coincide with those taken in the weak*-topology.

Arias and Popescu \cite{AP95} first showed that the algebras $\B(\H) \, \ol{\otimes} \, \L_d$ and $\B(\H) \, \ol{\otimes} \, \R_d$ are reflexive.
In fact they satisfy much stronger properties as we will soon present.
Their results concern the wot-versions and $d < \infty$.
Let us give here a direct proof that treats the $d = \infty$ case as well.

We require the following notation.
For $\la \in \bB_d$ and $w = w_m \dots w_1 \in \bF_+^d$ we write
\[
w(\la) = \la_{w_m} \cdots \la_{w_1}.
\]
In  \cite[Example 8]{AP95} and \cite[Theorem 2.6]{DP99} it has been observed that the eigenvectors of $\L_d^*$ are of the form
\[
\nu_\la = (1 - \nor{\la}^2)^{1/2} \sum_{w \in \bF_+^d} w(\la) e_w \qfor \la \in \mathbb{B}_d.
\]

\begin{proposition}\label{P:spatial} \cite{AP95}
The algebras  $\B(\H) \, \ol{\otimes} \, \L_d$ and $\B(\H) \, \ol{\otimes} \, \R_d$ are reflexive.
\end{proposition}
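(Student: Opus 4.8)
The plan is to show that $\B(\H) \, \ol\otimes \, \L_d = \Ref(\B(\H) \, \ol\otimes \, \L_d)$, and then invoke the symmetry between left and right creation operators (the eigenvector argument for $\R_d$ is identical, using $\ol w(\la)$ in place of $w(\la)$). Since the inclusion $\B(\H) \, \ol\otimes \, \L_d \subseteq \Ref(\B(\H) \, \ol\otimes \, \L_d)$ is automatic, the work is in the reverse inclusion. So fix $T \in \Ref(\B(\H) \, \ol\otimes \, \L_d)$; I want to show $T \in \B(\H) \, \ol\otimes \, \L_d$.

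First I would decompose $T$ using the Fourier coefficients $G_m(T)$ coming from the gauge action $U_s$ on $\H \otimes \ell^2(\bF_+^d)$: since $\B(\H) \, \ol\otimes \, \L_d$ is invariant under $\ad U_s$ and the range of $\ad U_s$ stays in $\Ref$ (as $\Lat$ is carried to $\Lat$), each $G_m(T)$ again lies in $\Ref(\B(\H) \, \ol\otimes \, \L_d)$, and by Fej\'er's Lemma $\si_{n+1}(T) \to T$ in the w*-topology; so it suffices to treat a single homogeneous piece $G_m(T)$. For such a piece I would use Proposition~\ref{P:lower triangular}: the reflexive cover of $\B(\H) \, \ol\otimes \, \L_d$ consists of left lower triangular operators (each invariant subspace $\ol{(\B(\H) \ol\otimes \L_d)(\xi \otimes e_w)}$ is contained in $\H \otimes \ol{\spn}\{e_{zw} : z \in \bF_+^d\}$, which forces the triangularity condition $T_{\mu,\nu} = 0$ for $\nu \not<_l \mu$), hence $G_m(T) = \sum_{|\mu|=m} L_\mu(a_\mu \otimes p_\mu)$-type sums with $a_\mu \in \B(\H)$; the point is to show $G_m(T)$ actually has the form $\sum_{|\mu|=m} (a_\mu \otimes I) L_\mu$ with a \emph{single} $a_\mu$ for each $\mu$, i.e. that the $\B(\H)$-entries along a diagonal do not depend on the tail $w$.

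The mechanism for killing the $w$-dependence is the finite-dimensional cyclic module / eigenvector trick advertised before the statement. For $\la \in \bB_d$ consider the rank-one-in-the-Fock-variable vectors $\xi \otimes \nu_\la$; reflexivity gives $T(\xi \otimes \nu_\la) \in \ol{(\B(\H) \ol\otimes \L_d)(\xi \otimes \nu_\la)}$, and because $\nu_\la$ is an eigenvector of every $\Bl_\mu^*$ one can read off that the compression of $T$ to the cyclic subspace behaves like a multiplication operator; letting $\la$ range over $\bB_d$ and comparing coefficients (using that $\{w(\la)\}_{w}$ separates, i.e. the map $\la \mapsto \nu_\la$ has enough range) forces $T_{\mu w, w}$ to be independent of $w$. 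Concretely I would test against vectors of the form $(\sum_{|w| \le k} c_w \xi_w \otimes e_w)$ inside a finite-dimensional cyclic module, push through the reflexivity condition, and take limits to conclude $G_m(T) \in \B(\H) \ol\otimes \L_d$; then reassemble via Ces\`aro sums. For $\R_d$, replace $\leq_l$ by $\leq_r$, $\nu_\la$ by its right-creation analogue $(1-\|\la\|^2)^{1/2}\sum_w \ol w(\la) e_w$, and repeat verbatim.

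The main obstacle is the coefficient-comparison step: extracting from the single membership $T(\xi\otimes\nu_\la)\in\ol{(\B(\H)\ol\otimes\L_d)(\xi\otimes\nu_\la)}$ enough information to pin down each operator entry $T_{\mu w,w}\in\B(\H)$ uniformly in $w$. One must be careful that the closure is a w*-closure of a possibly non-finitely-generated module when $d=\infty$, so rank-one test functionals and a density/approximation argument (rather than a naive basis expansion) are needed; this is exactly where the $d=\infty$ case differs from \cite{AP95}, and where the finite-dimensional cyclic modules do the heavy lifting by reducing everything to a genuinely finite linear-algebra computation before passing to the limit.
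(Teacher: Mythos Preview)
Your overall architecture matches the paper's: reduce to Fourier coefficients via the gauge action, observe that elements of the reflexive cover are left lower triangular, and then argue that $T_{\mu w,w}$ is independent of $w$ so that $G_m(T)=\sum_{|\mu|=m}L_\mu(T_{\mu,\mt}\otimes I)$. The $m=0$ case also goes essentially as you suggest, using a full eigenvector $\nu_\la$ with all $\la_i\neq 0$.

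The gap is in the $m\geq 1$ step, which you correctly flag as ``the main obstacle'' but do not actually resolve. Testing against a single $\xi\otimes\nu_\la$ and ``comparing coefficients'' only gives you information about the combinations $\sum_{|\mu|=m}\ol{\mu(\la)}\,T_{\mu w,w}$, and it is not clear how varying $\la\in\bB_d$ lets you disentangle the individual $T_{\mu w,w}$ for distinct $\mu$ of the same length (the functions $\mu\mapsto\mu(\la)$ are far from linearly independent across words). The paper does \emph{not} use the global eigenvectors $\nu_\la$ for $m\geq 1$. Instead it fixes $\mu$, writes $\mu=q\,i^{\om}$ with $q_1\neq i$ (or $q=\mt$), and works with the single-letter eigenvector $v=\sum_{k\geq 0} r^k e_{i^k}$ together with its finitely many left shifts $\Bl_{q(t)}v$; these generate a genuinely finite-dimensional $\L_d^*$-cyclic module in which $(\Bl_{\mu'})^*\Bl_q v$ vanishes for all $\mu'\neq\mu$ of length $m$, isolating the single word $\mu$. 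That handles $|z|\leq 1$; the general case is an induction on $|z|$ carried out by conjugating with the \emph{right} shifts $R_z(R_z)^*$, which commute with $\B(\H)\,\ol\otimes\,\L_d$, to transport the length-one computation along the tree.

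So the missing ingredients are: (i) replacing $\nu_\la$ by one-letter eigenvectors to isolate a fixed $\mu$, and (ii) the induction on $|z|$ via right-shift conjugation. Without these, ``push through the reflexivity condition and take limits'' is a description of what you want to happen rather than a proof that it does.
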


\begin{proof}
We just show that $\B(\H) \, \ol{\otimes} \, \L_d$ is reflexive.
Since the gauge action of $\B(\H \otimes \ell^2(\bF_+^d))$ restricts to a gauge action of $\B(\H) \, \ol{\otimes} \, \L_d$, it suffices to show that every $G_m(T)$ is in $\B(\H) \, \ol{\otimes} \, \L_d$ whenever $T$ is in $\Ref(\B(\H) \, \ol{\otimes} \, \L_d)$.

For $\xi, \eta \in \H$ and $\nu, \mu \in \bF_+^d$ there is a sequence $X_n \in \B(\H) \, \ol{\otimes} \, \L_d$ such that 
\[
\sca{T_{\mu, \nu}\xi, \eta}
=
\sca{T \xi \otimes e_\nu, \eta \otimes e_{\mu}}
=
\lim_n \sca{X_n \xi \otimes e_\nu, \eta \otimes e_{\mu}}
=
\lim_n \sca{[X_n]_{\mu, \nu}\xi, \eta}
\]
Taking $\nu \not<_l \mu$ gives that $T$ is left lower triangular as every $X_n$ is so.
Therefore it suffices to show that $T_{\mu z, z} = T_{\mu, \mt}$ for all $z \in \bF_+^d$.
Indeed, when this holds, we can write
\begin{align*}
G_m(T) & =
\begin{cases}
\sum_{|\mu| = m} L_\mu (T_{\mu, \mt} \otimes I) & \text{ if } m \geq 0, \\
0 & \text{ if } m < 0,
\end{cases}
\end{align*}
and thus $G_m(T) \in \B(\H) \, \ol{\otimes} \, \L_d$.
For convenience we use the notation
\[
T_{(\mu)} := L_\mu^* G_m(T) = \sum_{w \in \bF_+^d} T_{\mu w,w} \otimes p_w.
\]
We treat the cases $m=0$ and $m \geq 1$ separately.

\smallskip

\noindent $\bullet$ \textbf{The case $m=0$.} Let $z \in \bF_+^d$ and assume that $\{z_1, \dots, z_{|z|} \} \subseteq [d']$ for some finite $d'$.
If $d < \infty$ then take $d'=d$.
Let $\la \in \bB_{d'} \subseteq \bB_d$ such that $\la_i \neq 0$ for every $i \in [d']$, and consider the vector
\[
g = \sum_{w \in \bF_+^{d'}} w(\la) e_w.
\]
As $g$ is an eigenvector for $\L_d^*$ we have that $(L_\mu (x \otimes I))^* \xi \otimes g$ is in the closure of $\{y\xi \otimes g \mid y \in \B(\H)\}$.
Therefore for $\xi \in \H$ there exists a sequence $(x_n)$ in $\B(\H)$ such that
\begin{equation}\label{eq:1}
G_0(T)^* \xi \otimes g = \lim_n x_n^* \xi \otimes g.
\end{equation}
Hence for $\eta \in \H$ we get 
\begin{align*}
w(\la) \sca{\xi, T_{w,w}\eta} 
& =
\sca{\xi, T_{w,w} \eta} \sca{g, e_w} 
=
\sca{G_0(T)^* \xi \otimes g, \eta \otimes e_w} \\
& \hspace{-4pt} \stackrel{(\ref{eq:1})}{=}
\lim_n \sca{x_n^* \xi \otimes g, \eta \otimes e_w}
=
\lim_n \sca{\xi, x_n\eta} \sca{g, e_w} \\
& =
w(\la) \lim_n \sca{\xi, x_n\eta}.
\end{align*}
Applying for $w = \mt$ and $w = z$ we have that $T_{z,z} = T_{\mt, \mt}$ as $z(\la) \neq 0$.
Since $z$ was arbitrary we have that $G_0(T) = T_{\mt, \mt} \otimes I$.

\smallskip

\smallskip

\noindent $\bullet$ \textbf{The case $m \geq 1$.} 
We have to show that $T_{\mu z, z} = T_{\mu, \mt}$ for all $z \in \bF_+^d$ and $|\mu| = m$.
Notice that every $\mu$ of length $m$ can be written as $\mu = q i^\om$ for some $i \in [d]$ and $\om \geq 1$.
By symmetry it suffices to treat the case where $i=1$.

Hence in what follows we fix a word $\mu = q 1^\om$ of length $m = |q| + \om$ with
\[
\om \geq 1 \qand q = q_{|q|} \dots q_1 \text{ with } q_1 \neq 1 \text{ or } q = \mt.
\]
We will use induction on $|z|$.
To this end fix an $r \in (0,1)$.
For $w = w_{|w|} \dots w_1 \in \bF_+^d$ we write
\[
w(t) = w_t \dots w_1 \qfor t = 1, \dots, |w|.
\]

\noindent - For $|z| = 1$: 
First suppose that $q \neq \mt$.
Let the vectors
\[
v := e_\mt + \sum_{k=1}^\infty r^k e_{1^k}
\qand
\Bl_{q(t)} v
=
e_{q(t)} + \sum_{k=1}^\infty r^k e_{q(t) 1^k} 
\text{ for }
t = 1, \dots, |q|
\]
and fix $\xi \in \H$.
As $v$ is an eigenvector for $\L_d^*$ we get that $X^* \xi \otimes \Bl_{q} v$ is in the closure of
\[
\{ x \xi \otimes v + \sum_{t=1}^{|q|} x_{t} \xi \otimes \Bl_{q(t)} v \mid x, x_t \in \B(\H), t = 1, \dots, |q| \}
\]
for all $X \in \B(\H) \, \ol{\otimes} \, \L_d$.
Hence there are sequences $(x_n)$ and $(x_{t,n})$ in $\B(\H)$ such that
\begin{equation}\label{eq:2}
G_m(T)^* \xi \otimes \Bl_{q} v = \lim_n x_n^*\xi \otimes v + \sum_{t=1}^{|q|} x_{t,n}^*\xi \otimes \Bl_{q(t)} v.
\end{equation}
Furthermore for $|\mu'|=m$ we have that $(\Bl_{\mu'})^* \Bl_{q} v = \de_{\mu', \mu} r^{\om} v$.
Now for all $\eta \in \H$ and $z \in \bF_+^d$ we get that
\begin{align*}
\sca{G_m(T)^* \xi \otimes \Bl_{q} v, \eta \otimes e_z}
& =
r^{\om} \sca{\xi,  T_{q 1^\om z, z} \eta} \sca{v, e_z}.
\end{align*}
Every $\Bl_{q(t)} v$ is supported on $q(t) 1^k$ with $|q(t) 1^k| \geq t \geq 1$ and so $\sca{\Bl_{q(t)} v, e_\mt} = 0$ for all $t$.
By taking the inner product with $\eta \otimes e_\mt$ in equation (\ref{eq:2}) we get 
\[
r^{\om} \sca{\xi,  T_{q 1^\om, \mt} \eta} = \lim_n \sca{\xi, x_n \eta}.
\]
On the other hand the only vector of length $1$ in the support of $\Bl_{q(t)} v$ is achieved when $t=1$ and $k=0$, in which case it is $q(1) \neq 1$ by assumption.
Therefore by taking inner product with $\eta \otimes e_1$ in equation (\ref{eq:2}) we obtain
\begin{align*}
r^{\om + 1} \sca{\xi,  T_{q 1^\om 1, 1} \eta} 
=
\lim_n r \sca{\xi, x_n \eta}.
\end{align*}
Therefore $\sca{\xi,  T_{q 1^\om 1, 1} \eta} = \lim_n r^{-\om} \sca{\xi, x_n \eta} = \sca{\xi,  T_{q 1^\om, \mt} \eta}$ which implies that $T_{q 1^\om 1, 1} = T_{q 1^\om, \mt}$ when $q \neq \mt$.

On the other hand if $q = \mt$ then we repeat the above argument by substituting $\Bl_{q(t)} v$ with zeroes to get again that $ T_{1^\om 1, 1} = T_{1^\om,\mt}$.
In every case we have that $T_{\mu 1, 1} = T_{\mu, \mt}$.

Next we show that $T_{\mu 2, 2} = T_{\mu, \mt}$.
To this end let the vectors
\[
w = e_\mt + \sum_{k=1}^\infty r^k e_{2^k} \qand \Bl_{\mu(s)} w = e_{\mu(s)} + \sum_{k=1}^\infty r^k e_{\mu(s) 2^k} \text{ for } s = 1, \dots, m.
\]
As above, for $\xi \in \H$ there are sequences $(y_n)$ and $(y_{s,n})$ in $\B(\H)$ such that
\begin{equation}\label{eq:3}
G_m(T)^* \xi \otimes \Bl_{\mu} w
 =
\lim_n y_n^* \xi \otimes w + \sum_{s=1}^{m} y_{s,n}^* \xi \otimes \Bl_{\mu(s)} w
\end{equation}
since $w$ is an eigenvector of $\L_d^*$.
Notice here that $(\Bl_{\mu'})^*  \Bl_{\mu} w = \de_{\mu', \mu} w$ when $|\mu'| = m$.
Now for $\eta \in \H$ and $z \in \bF_+^d$ we get
\begin{align*}
\sca{G_m(T)^* \xi \otimes \Bl_{\mu} w, \eta \otimes e_z}
& =
\sca{\xi, T_{\mu z, z} \eta} \sca{w, e_z}.
\end{align*}
For $z = \mt$ we have that $\sca{\Bl_{\mu(s)} w, e_\mt} = 0$ for all $s \in [m]$ and therefore equation (\ref{eq:3}) gives
\[
\sca{\xi, T_{\mu, \mt} \eta} = \lim_n \sca{\xi, y_n \eta}.
\]
For $z = 2$ we have that $\sca{\Bl_{\mu(1)} w, e_2} = \sca{\Bl_{1} w, e_2} = 0$.
Moreover we have that $\sca{\Bl_{\mu(s)} w, e_2} = 0$ when $s \geq 2$.
Therefore equation (\ref{eq:3}) gives
\[
r \sca{\xi, T_{q 1^\om2 , 2} e_2} = \lim_n r \sca{\xi, y_n \eta}.
\]
As a consequence we have $\sca{\xi, T_{\mu 2, 2} e_2} = \sca{\xi, T_{\mu, \mt} \eta}$ and thus $T_{\mu 2, 2} = T_{\mu, \mt}$.
Applying for $i \in \{3, \dots, d\}$ yields $T_{\mu i, i} = T_{\mu, \mt}$ for all $i \in [d]$.

\smallskip

\noindent - Inductive hypothesis: Assume that $T_{q 1^\om z, z} = T_{q 1^\om, \mt}$ when $|z| \leq N$.
We will show that the same is true for words of length $N+1$.

Consider first the word $1 z$ with $|z| = N$.
Suppose that $q \neq \mt$ so that $q(1) \neq 1$.
We apply the same arguments for the vectors $\Br_z v$ and $\Br_z \Bl_{q(t)} v$ with $t = 1, \dots, |q|$.
Since $\Br_z$ commutes with every $\Bl_\nu$ we get that
\[
\Br_z (\Br_z)^* (\Bl_\nu)^* \Br_z v = \Br_z (\Bl_\nu)^* v
\qand
\Br_z (\Br_z)^* (\Bl_\nu)^* \Br_z \Bl_{q(t)} v = \Br_z (\Bl_\nu)^* \Bl_{q(t)} v.
\]
As every $R_z (R_z)^*$ commutes with every $x \otimes I$ for $x \in \B(\H)$, we have that for a fixed $\xi \in \H$ there are sequences $(x_n)$ and $(x_{t,n})$ in $\B(\H)$ such that
\begin{equation}\label{eq:4}
R_z (R_z)^* G_m(T)^* \xi \otimes \Br_z \Bl_{q} v = \lim_n x_n^*\xi \otimes \Br_z v + \sum_{t=1}^{|q|} x_{t,n}^*\xi \otimes \Br_z \Bl_{q(t)} v.
\end{equation}
Arguing as above for $\eta \otimes e_z$ and $\eta \otimes e_{1z}$ yields $\sca{\xi, T_{q 1^\om 1z, 1z} \eta} = \sca{\xi, T_{q 1^\om z, z} \eta}$.
Consequently $T_{q 1^\om 1z, 1z} = T_{q 1^\om z, z}$ which is $T_{q 1^\om, \mt}$ by the inductive hypothesis.

On the other hand if $q = \mt$ then we repeat the above arguments by substituting the $\Bl_{q(t)} v$ with zeroes.
Therefore in any case we have that $T_{\mu 1z, 1z} = T_{\mu, \mt}$.

For $2z$ with $|z| = N$ we take the vectors $\Br_z w$ and $\Br_z \Bl_{\mu(s)} w$ for $s \in [m]$.
Then for a fixed $\xi \in \H$ there are sequences $(y_n)$ and $(y_{s,n})$ in $\B(\H)$ such that
\begin{equation}\label{eq:5}
R_z (R_z)^* G_m(T)^* \xi \otimes \Br_z \Bl_{\mu} w
=
\lim_n y_n^*\xi \otimes \Br_z w + \sum_{s=1}^{m} y_{s,n}^* \xi \otimes \Br_z \Bl_{\mu(s)} w.
\end{equation}
Taking inner product with $\eta \otimes e_z$ and $\eta \otimes e_{2z}$ gives that $\sca{\xi , T_{\mu 2z, 2z} \eta} = \sca{\xi , T_{\mu z, z} \eta}$.
As $\eta$ and $\xi$ are arbitrary we then derive that $T_{\mu 2z, 2z} = T_{\mu z, z}$ which is $T_{\mu, \mt}$ by the inductive hypothesis.
Applying for $i \in \{3, \dots, d\}$ in place of $2$ gives the same conclusion, thus $T_{\mu i z, i z} = T_{\mu, \mt}$ for all $i \in [d]$ and $|z| = N$.
Induction then shows that $T_{\mu z, z} = T_{\mu, \mt}$ for all $z \in \bF_+^d$.
%
\end{proof}

\begin{remark}
Reflexivity of $\B(\H) \, \ol{\otimes} \, \bH^\infty(\bZ_+^d)$ can be proven along the same lines of reasoning by using the co-invariant subspaces $[x \xi \otimes g_{\Bi} \mid x \in \B(\H)]$ for the vectors
\[
g_{\Bi} = \sum_{k \in \bZ_+} r^k e_{k \Bi} \text{ with } r \in (0,1) \text{ and } i= 1, \dots, d.
\]
In fact one can show that $T$ is in $\B(\H) \, \ol{\otimes} \, \bH^\infty(\bZ_+^d)$ if and only if $T$ is lower triangular and $G_{\un{m}} = L_{\un{m}} (x_{\un{m}} \otimes I)$ for some $x_{\un{m}} \in \B(\H)$ whenever $\un{m} \in \bZ_+^d$.
The same holds for the tensor product of $\B(\H)$ with $\bH^\infty(\bZ_+^d)$ in the weak operator topology, inducing just one type of spatial tensor product.
\end{remark}

\subsection{Hyper-reflexivity}

Arveson \cite{Arv75} introduced a measurement for reflexivity.
For $\A \subseteq \B(\H)$ let the function $\be \colon \B(\H) \to \bR$ be given by
\[
\beta(T, \A) = \sup\{ \nor{(1-P)TP} \mid P \in \Lat(\A)\}.
\]
A w*-closed algebra $\A \subseteq \B(\H)$ is called \emph{hyper-reflexive} with distance constant at most $C$ if it satisfies
\[
\dist(T, \A) \leq C \beta(T, \A) \foral T \in \B(\H).
\]
Therefore hyper-reflexive algebras are reflexive.
Notice that $\beta(T, \A) \leq \dist(T, \A)$ always holds.

It follows that hyper-reflexivity can also be a hereditary property.
Kraus-Larson \cite{KL86} and Davidson \cite{Dav87} have shown that if $\A$ has the $\bA_1(1)$ property and is hyper-reflexive with distance constant at most $C$ then every w*-closed subspace of $\A$ is hyper-reflexive with distance constant at most $2C + 1$.

There is an alternative characterization of hyper-reflexivity through $\A_{\perp}$:
$\A$ is hyper-reflexive\footnote{\
Reflexivity is equivalent to $\A_{\perp}$ just being the closed linear span of its rank one functionals, e.g. \cite[Theorem 7.1]{Arv84}.}
if and only if for every $\phi \in \A_{\perp}$ there are rank one functionals $\phi_n \in \A_{\perp}$ such that $\phi = \sum_n \phi_n$ and $\sum_n \nor{\phi_n} < \infty$; e.g. \cite[Theorem 7.4]{Arv84}.
The hyper-reflexivity constant is at most $K$ when we achieve $\sum_n \nor{\phi_n} \leq K \cdot \nor{\phi}$ for $\phi = \sum_n \phi_n \in \A_{\perp}$ as in the representation above.
From this characterization it is readily verified that (hereditary) hyper-reflexivity is preserved under similarities.
Therefore if a similarity is given by an invertible $u$ then the hyper-reflexivity constant can change as much as $\nor{u}^2 \cdot \|u^{-1}\|^2$.

A remarkable result of Bercovici \cite{Ber98} asserts that a wot-closed algebra is hyper-reflexive with distance constant at most $3$ when its commutant contains two isometries with orthogonal ranges.
Consequently every w*-closed subalgebra of $\B(\H) \, \ol{\otimes} \, \L_d$ is hyper-reflexive with distance constant at most $3$, when $d \geq 2$, as its commutant contains $I_\H \, \ol{\otimes} \, \R_d$.

\section{Dynamical systems }\label{S:dyn sys}

We give the basic definitions of the w*-semicrossed products we will consider.
Henceforth we fix a w*-closed subalgebra $\A$ of $\B(\H)$.
Since we are working towards reflexivity and the bicommutant property we will assume that $\A$ is unital.
We write $\End(\A)$ for the unital w*-continuous completely bounded endomorphisms of $\A$, i.e. every $\al \in \End(\A)$ satisfies
\[
\nor{\al}_{cb} := \sup\{ \nor{ \al \otimes \id_n} \mid n \in \bZ_+\} < \infty.
\]

\subsection{Dynamical systems over $\bF_+^d$}\label{Ss:dyn free}

A (unital) w*-dynamical system denoted by $(\A, \{\al_i\}_{i \in [d]})$ consists of $d$ (unital) $\al_i \in \End(\A)$ such that
\[
\sup \{ \nor{\al_\mu} \mid \mu \in \bF_+^d\} < \infty.
\]
Given a w*-dynamical system $(\A, \{\al_i\}_{i \in [d]})$, we define two representations $\pi$ and $\ol{\pi}$ of $\A$ acting on $\K = \H \otimes \ell^2(\bF_+^d)$ by
\[
\pi(a) \xi \otimes e_\mu = \al_{\mu}(a)\xi \otimes e_\mu
\qand
\ol{\pi}(a) \xi \otimes e_\mu = \al_{\ol{\mu}}(a) \xi \otimes e_\mu.
\]
We need this distinction as the $\al_i$ induce both a homomorphism and an anti-homomorphism of $\bF_+^d$ in $\End(\A)$.
Note that $\pi(a)$ and $\ol{\pi}(a)$ are indeed in $\B(\K)$ as the $\al_\mu$ are uniformly bounded. 

\begin{definition}
Let $(\A, \{\al_i\}_{i \in [d]})$ be a w*-dynamical system.
We define the w*-semicrossed products
\[
\scp{\A}{\al}{\L_d} := \ol{\spn}^{\textup{w*}}\{L_\mu \ol{\pi}(a) \mid a \in \A, \mu \in \bF_+^d\}
\]
and
\[
\scp{\A}{\al}{\R_d} := \ol{\spn}^{\textup{w*}}\{R_\mu \pi(a) \mid a \in \A, \mu \in \bF_+^d\}.
\]
\end{definition}

The pairs $(\ol{\pi}, \{L_i\}_{i=1}^d)$ and $(\pi, \{R_i\}_{i=1}^d)$ satisfy the \emph{covariance relations}
\[
\ol{\pi}(a) L_i = L_i \ol{\pi} \al_i(a) \qand \pi(a) R_i = R_i \pi\al_i(a)
\]
for all $a \in \A$ and $i \in [d]$.
Indeed for every $w \in \bF_+^d$ we have that
\begin{align*}
\ol{\pi}(a) L_i \xi \otimes e_w
& =
\al_{\ol{i w}}(a) \xi \otimes e_{i w}
=
\al_{\ol{w}}\al_i(a)\xi \otimes e_{i w}
=
L_i \ol{\pi} \al_i(a) \xi \otimes e_w
\end{align*}
and similarly for the right version.
Consequently $\scp{\A}{\al}{\L_d}$ and $\scp{\A}{\al}{\R_d}$ are (unital) algebras.

The unitaries $U_s \in \B(\K)$ for $s \in [-\pi, \pi]$ induce a gauge action on $\scp{\A}{\al}{\L_d}$ since
\[
U_s \ol{\pi}(a) U_s^* = \ol{\pi}(a) \qand U_s L_\mu U_s^* = e^{i |\mu| s} L_\mu.
\]
Therefore Fej\'{e}r's Lemma implies that $T \in \scp{\A}{\al}{\L_d}$ if and only if $G_m(T) \in \scp{\A}{\al}{\L_d}$ for all $m \in \bZ$.
The same is true for $\scp{\A}{\al}{\R_d}$.

\begin{proposition}\label{P:Fc al}
Let $(\A, \{\al_i\}_{i \in [d]})$ be a unital w*-dynamical system.
Then an operator $T \in \B(\K)$ is in $\scp{\A}{\al}{\L_d}$ if and only if it is left lower triangular and
\[
G_m(T) = \sum_{|\mu| = m} L_\mu \ol{\pi}(a_\mu) \qfor a_\mu \in \A
\]
for all $m \in \bZ_+$.
Similarly an operator $T \in \B(\K)$ is in $\scp{\A}{\al}{\R_d}$ if and only if it is right lower triangular and
\[
G_m(T) = \sum_{|\mu| = m} R_\mu \pi(a_\mu) \qfor a_\mu \in \A
\]
for all $m \in \bZ_+$.
\end{proposition}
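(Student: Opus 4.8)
The proposition refines the grading criterion recorded just above it, namely that $T\in\scp{\A}{\al}{\L_d}$ if and only if $G_m(T)\in\scp{\A}{\al}{\L_d}$ for every $m$, so the plan is to feed that criterion into Proposition \ref{P:lower triangular}. I would treat only $\scp{\A}{\al}{\L_d}$; the assertion for $\scp{\A}{\al}{\R_d}$ follows from the mirror argument, using the right-triangular half of Proposition \ref{P:lower triangular} and replacing $\leq_l,L_\mu,\ol\pi$ throughout by $\leq_r,R_\mu,\pi$.

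For the converse direction, suppose $T$ is left lower triangular with $G_m(T)=\sum_{|\mu|=m}L_\mu\ol\pi(a_\mu)$ for all $m\geq0$, while $G_m(T)=0$ for $m<0$ by Proposition \ref{P:lower triangular}. Each $G_m(T)$ is thus a strong (hence w*-) limit of its finite partial sums $\sum_{|\mu|\in F}L_\mu\ol\pi(a_\mu)$, which lie in $\scp{\A}{\al}{\L_d}$; since $\scp{\A}{\al}{\L_d}$ is w*-closed we get $G_m(T)\in\scp{\A}{\al}{\L_d}$ for all $m$, and the grading criterion gives $T\in\scp{\A}{\al}{\L_d}$.

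For the forward direction, let $T\in\scp{\A}{\al}{\L_d}$. Every generator $L_\nu\ol\pi(a)$ is left lower triangular (its $(\sigma,\tau)$-entry vanishes unless $\sigma=\nu\tau$), and left lower triangularity is a w*-closed condition since the entries of an operator are w*-continuous functionals of it; so $T$ is left lower triangular and $G_m(T)=0$ for $m<0$. Fix $m\geq0$ and $\mu$ with $|\mu|=m$. Applying the w*-continuous map $G_m$ to a net in $\spn\{L_\nu\ol\pi(a)\mid\nu\in\bF_+^d,\ a\in\A\}$ that w*-converges to $T$, and using $G_m(L_\nu\ol\pi(a))=\de_{|\nu|,m}L_\nu\ol\pi(a)$, we may write $G_m(T)$ as a w*-limit of finite sums $\sum_{|\nu|=m}L_\nu\ol\pi(a_{\nu,\lambda})$. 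Since $L_\mu^*L_\nu=\de_{\mu,\nu}I$ for $|\nu|=|\mu|=m$ and $L_\mu^*$ acts w*-continuously, $L_\mu^*G_m(T)$ is the w*-limit of $\ol\pi(a_{\mu,\lambda})$; on the other hand Proposition \ref{P:lower triangular} gives $L_\mu^*G_m(T)=\sum_w T_{\mu w,w}\otimes p_w$. Compressing both descriptions to $\H\otimes e_\mt$ shows that $a_\mu:=T_{\mu,\mt}$ is the w*-limit of the $a_{\mu,\lambda}\in\A$, hence $a_\mu\in\A$ as $\A$ is w*-closed; compressing to $\H\otimes e_w$ and invoking the w*-continuity of $\al_{\ol w}$ gives $T_{\mu w,w}=\al_{\ol w}(a_\mu)$ for every $w\in\bF_+^d$, that is, $L_\mu^*G_m(T)=\ol\pi(a_\mu)$. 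Finally $\ran G_m(T)\subseteq\H\otimes\ell^2(\{w\mid|w|\geq m\})$ while $\sum_{|\mu|=m}L_\mu L_\mu^*$ is the projection onto that subspace, so summing $L_\mu L_\mu^*G_m(T)=L_\mu\ol\pi(a_\mu)$ over $|\mu|=m$ yields $G_m(T)=\sum_{|\mu|=m}L_\mu\ol\pi(a_\mu)$.

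The step I expect to be the real obstacle is this last identification: extracting from the abstract membership $G_m(T)\in\scp{\A}{\al}{\L_d}$ the covariance relation $T_{\mu w,w}=\al_{\ol w}(a_\mu)$. It rests on the w*-continuity of the endomorphisms $\al_i$ — so that a w*-limit of $\al_{\ol w}(a_{\mu,\lambda})$ is $\al_{\ol w}$ of the w*-limit — together with their uniform boundedness, which is exactly what turns $\pi$, $\ol\pi$ and all the sums over words of a fixed length into genuine bounded operators; in the $d=\infty$ case these sums must throughout be read as strong limits of their finite subsums, which are uniformly bounded because the ranges of the $L_\mu$ with $|\mu|=m$ are pairwise orthogonal.
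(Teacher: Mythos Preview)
Your proof is correct and follows essentially the same strategy as the paper's: the converse direction is identical (bounded partial sums via the orthogonality of the $L_\mu L_\mu^*$, then Fej\'er), and the forward direction rests on the same idea that the stated conditions are w*-closed and hold on generators. The only difference is one of explicitness: the paper checks the conditions on a single generator $L_z\ol\pi(a)$ and tacitly uses that the conditions cut out a w*-closed subspace, whereas you unpack this via an approximating net and the w*-continuity of $G_m$, $L_\mu^*$, and the $\al_{\ol w}$---thereby making visible the covariance $T_{\mu w,w}=\al_{\ol w}(T_{\mu,\mt})$ that the paper leaves implicit.
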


\begin{proof}
We will just show the left case.
First notice that if $T = L_z \ol{\pi}(a)$ with $|z| = m$ then $\sum_{w \in \bF_+^d} T_{z w, w} \otimes p_w = \ol{\pi}(a)$.
Moreover $T$ is a left lower triangular operator; indeed if $\nu \not\leq_l \mu$ then
\begin{align*}
\sca{L_z \ol{\pi}(a) \xi \otimes e_{\nu}, \eta \otimes e_\mu}
 =
\de_{z \nu, \mu} \sca{\al_{\ol{\nu}}(a)\xi, \eta}
 =
0.
\end{align*}
Hence $G_m(T) = \sum_{|\mu| = m} L_\mu \ol{\pi}(a_\mu)$ where $a_z = a$ and $a_\mu = 0$ for $\mu \neq z$.
Conversely suppose that $T$ satisfies these conditions.
Then for every finite subset $F_m$ of words of length $m$ we can verify that
\[
\| \sum_{\mu \in F_m} L_\mu \ol{\pi}(a_\mu) \| = \| \sum_{\mu \in F_m} L_\mu (L_\mu)^* G_m(T) \| \leq \nor{G_m(T)}
\]
since the $L_\mu (L_\mu)^*$ are pairwise orthogonal projections.
Therefore the net $(\sum_{\mu \in F_m} L_\mu \ol{\pi}(a_\mu))_{\{F_m:\textup{finite}\}}$ is bounded and thus the sum is the w*-limit of elements in $\scp{\A}{\al}{\L_d}$.
Hence every $G_m(T)$ is in $\scp{\A}{\al}{\L_d}$ and Fej\'{e}r's Lemma completes the proof.
\end{proof}

We turn our attention to dynamical systems $(\A, \{\al_i\}_{i \in [d]}$) where each $\al_i \in \End(\A)$ is induced by an invertible row operator $u_i$, i.e.
\begin{equation}\label{eq:uba defn}
\al_i(a) = \sum_{j_i \in [n_i]} u_{i, j_i} \, a \, v_{i, j_i} \foral a \in \A,
\end{equation}
where $v_i$ is the inverse of $u_i$.

\begin{definition}\label{D:uba defn}
We say that $\{\al_i\}_{i \in [d]}$ is a \emph{uniformly bounded spatial action} on a w*-closed algebra $\A$ of $\B(\H)$ if every $\al_i$ is implemented by an invertible row operator $u_i$ and $\{u_i\}_{i \in [d]}$ is uniformly bounded.
\end{definition}

\begin{proposition}\label{P:uba sys}
If $\{\al_i\}_{i \in [d]}$ is a uniformly bounded spatial action on a w*-closed algebra $\A$ of $\B(\H)$ then $(\A, \{\al_i\}_{i \in [d]})$ is a unital w*-dynamical system.
\end{proposition}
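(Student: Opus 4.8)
The plan is to extract the two defining requirements of a w*-dynamical system over $\bF_+^d$ --- that each $\al_i$ lies in $\End(\A)$ and that $\sup\{\nor{\al_\mu} \mid \mu \in \bF_+^d\} < \infty$ --- from a single factorization identity. First I would dispose of the individual generators. Formula (\ref{eq:uba defn}) reads $\al_i(a) = u_i\,(a \otimes I_{\ell^2(n_i)})\,v_i$, with the defining series converging strongly to this operator, so $\al_i$ is the composite of the normal unital $*$-homomorphism $a \mapsto a \otimes I$ with the normal completely bounded map $x \mapsto u_i x v_i$ (whose cb-norm is at most $\nor{u_i}\cdot\nor{v_i}$); it is unital since $\al_i(I_\H) = u_i v_i = \sum_{j_i \in [n_i]} u_{i,j_i} v_{i,j_i} = I_\H$, and multiplicative on $\A$ because $v_{i,j} u_{i,k} = \de_{j,k} I_\H$ forces $u_i(a \otimes I) v_i\, u_i (b \otimes I) v_i = u_i(ab \otimes I) v_i$. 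Since $\al_i(\A) \subseteq \A$ is part of the notion of a spatial action \emph{on} $\A$, each $\al_i \in \End(\A)$, hence so is every composite $\al_\mu = \al_{\mu_m} \circ \cdots \circ \al_{\mu_1}$.

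Next I would prove by induction on $|\mu|$ that
\[ \al_\mu(a) = \wh{u}_\mu\,(a \otimes I)\,\wh{v}_\mu \qfor a \in \A, \]
where $\wh{u}_\mu, \wh{v}_\mu$ are the row and column operators attached to $\mu$ in Definition \ref{D:ubo defn} and $a \otimes I$ acts on $\H \otimes \ell^2(n_{\mu_m} \cdots n_{\mu_1})$. The base case $|\mu| = 1$ is (\ref{eq:uba defn}). For the inductive step, write $\mu = \mu_m \cdots \mu_1$ and split off the innermost generator, $\al_\mu = \al_{\mu_m \cdots \mu_2} \circ \al_{\mu_1}$; applying the inductive hypothesis for the word $\mu_m \cdots \mu_2$ at the element $\al_{\mu_1}(a) = u_{\mu_1}(a \otimes I) v_{\mu_1} \in \A$ and distributing the inflation by $I_{[n_{\mu_m} \cdots n_{\mu_2}]}$ through that product yields
\[ \al_\mu(a) = \wh{u}_{\mu_m \cdots \mu_2}\,(u_{\mu_1} \otimes I_{[n_{\mu_m} \cdots n_{\mu_2}]})\,(a \otimes I)\,(v_{\mu_1} \otimes I_{[n_{\mu_m} \cdots n_{\mu_2}]})\,\wh{v}_{\mu_2 \cdots \mu_m}. \]
By Definition \ref{D:ubo defn}, $\wh{u}_{\mu_m \cdots \mu_2}$ followed by $u_{\mu_1} \otimes I_{[n_{\mu_m} \cdots n_{\mu_2}]}$ is exactly $\wh{u}_\mu$, and dually $v_{\mu_1} \otimes I_{[n_{\mu_m} \cdots n_{\mu_2}]}$ followed by $\wh{v}_{\mu_2 \cdots \mu_m}$ is $\wh{v}_\mu$, so the displayed expression collapses to $\wh{u}_\mu\,(a \otimes I)\,\wh{v}_\mu$, closing the induction. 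Peeling off the rightmost generator (rather than the leftmost) is what makes the factors line up with no rearrangement of the $u_{i,j_i}, v_{i,j_i}$.

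Finally, the identity gives $\nor{\al_\mu(a)} \leq \nor{\wh{u}_\mu} \cdot \nor{a} \cdot \nor{\wh{v}_\mu}$, so $\nor{\al_\mu} \leq \nor{\wh{u}_\mu} \cdot \nor{\wh{v}_\mu}$ for every $\mu \in \bF_+^d$; since $\{u_i\}_{i \in [d]}$ is uniformly bounded, $K := \sup_{\mu} \max\{\nor{\wh{u}_\mu}, \nor{\wh{v}_\mu}\}$ is finite and therefore $\sup\{\nor{\al_\mu} \mid \mu \in \bF_+^d\} \leq K^2 < \infty$. Combined with $\al_i \in \End(\A)$ for each $i$, this is precisely the definition of a unital w*-dynamical system, which proves the proposition. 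The one step that is not routine is the factorization identity, and even there the only difficulty is bookkeeping --- keeping the inflations $\otimes I_{[n_{\mu_m} \cdots n_{\mu_j}]}$ of Definition \ref{D:ubo defn} aligned with the ones produced by the composition --- which is why peeling generators off from the right is convenient; unitality, normality, complete boundedness and multiplicativity of the $\al_i$ are all immediate from the spatial form $\al_i(\cdot) = u_i(\cdot \otimes I) v_i$ together with the invertibility relations $v_{i,j} u_{i,k} = \de_{j,k} I_\H$ and $\sum_{j_i \in [n_i]} u_{i,j_i} v_{i,j_i} = I_\H$.
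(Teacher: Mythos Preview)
Your proof is correct and follows essentially the same route as the paper: both establish the factorization $\al_\mu(a) = \wh{u}_\mu\,(a \otimes I)\,\wh{v}_{\ol{\mu}}$ and read off the bound $\nor{\al_\mu} \leq \nor{\wh{u}_\mu}\cdot\nor{\wh{v}_{\ol\mu}} \leq K^2$. You are simply more explicit about why each $\al_i$ is unital, normal, completely bounded and multiplicative, whereas the paper takes these for granted and goes straight to the factorization via the iterated sum $\sum_{j_m}\cdots\sum_{j_1} u_{\mu_m,j_m}\cdots u_{\mu_1,j_1}\, a\, v_{\mu_1,j_1}\cdots v_{\mu_m,j_m}$.
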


\begin{proof}
Let $\mu = \mu_m \dots \mu_1$ be a word in $\bF_+^d$.
Referring to Definition \ref{D:ubo defn} we verify that
\begin{align*}
\al_\mu(a)
& =
\al_{\mu_m} \cdots \al_{\mu_1}(a) \\
& =
\sum_{j_m \in [\mu_m]} \cdots \sum_{j_1 \in [\mu_1]}
u_{\mu_m, j_m} \cdots u_{\mu_1, j_1} a v_{\mu_1, j_1} \cdots v_{\mu_m, j_m} \\
& =
\wh{u}_{\mu_m \dots \mu_1} a \wh{v}_{\mu_1 \dots \mu_m}
\end{align*}
for all $a \in \A$.
Therefore $\nor{\al_\mu}_{cb} \leq \nor{\wh{u}_\mu} \cdot \nor{\wh{v}_\mu}$ so that $\al_\mu \in \End(\A)$.
As $\{u_i\}_{i \in [d]}$ and $\{v_i\}_{i \in [d]}$ are uniformly bounded by $K$ we derive that $\nor{\al_\mu} \leq K^2$ for all $\mu$, hence $\{\al_\mu\}_{\mu \in \bF_+^d}$ is uniformly bounded.
\end{proof}

The prototypical examples of uniformly bounded actions are systems implemented by Cuntz families.

\begin{examples}\label{E:odometer}
Every (unital) endomorphism of $\B(\H)$ is implemented by a countable Cuntz family when $\H$ is separable.
A proof can be found in \cite[Proposition 2.1]{Arv89}.
However the Cuntz family is not uniquely defined as shown by Laca \cite{Lac93}.

Examples of endomorphisms of maximal abelian selfadjoint algebras implemented by a Cuntz family have been considered by the second author and Peters \cite{KPe15}.
In particular let $\vphi \colon X \to X$ be an onto map on a measure space $(X, m)$ such that: (i) $\vphi$ and $\vphi^{-1}$ preserve the null sets; and (ii) there are $d$ Borel cross-sections $\psi_1, \dots, \psi_d$ of $\vphi$ with $\psi_i(X) \cap \psi_j(X) = \mt$ such that $\cup_{i=1}^d \psi_i(X)$ is almost equal to $X$.
Then it is shown in \cite[Proposition 2.2]{KPe15} that the endomorphism $\al\colon L^\infty(X) \to L^\infty(X)$ induced by $\vphi$ is realized through a Cuntz family.
Such cases arise in the context of $d$-to-$1$ local homeomorphisms for which an appropriate decomposition of $X$ into disjoint sets can be obtained \cite[Lemma 3.1]{KPe15}. 
As long as the boundaries of the components are null sets then the requirements of \cite[Proposition 2.2]{KPe15} are satisfied.
The prototypical example is the Cuntz-Krieger odometer, where
\[
X = \prod_k \{1, \dots, d\} \qand m = \prod_k m'
\]
for the averaging measure $m'$, and the backward shift $\vphi$ \cite[Example 3.3]{KPe15}.

The results of \cite{KPe15} follow the inspiring work of Courtney-Muhly-Schmidt \cite{CMS12} on endomorphisms $\al$ of the Hardy algebra induced by a Blaschke product $b$.
In particular it is shown in \cite[Corollary 3.5]{CMS12} that there is a Cuntz family implementing $\al$ if and only if there is a specific orthonormal basis $\{v_1, \dots, v_d\}$ for $H^2(\bT) \ominus b \cdot H^2(\bT)$.
An important part of the theory in \cite{CMS12} is the existence of a master isometry $C_b$, and the reformulation of the problem in terms of W*-correspondences when combined with \cite{Lac93}.
These elements pass on to the context of \cite{KPe15} where further necessary and sufficient conditions are given for a Cuntz family to implement an endomorphism of $L^\infty(X)$.
\end{examples}

Uniformly bounded actions extend to the entire $\B(\H)$ and we will use the same notation for their extensions.
By applying $u_{i, j_i}$ and $v_{i,j_i}$ on each side of equation (\ref{eq:uba defn}) we also get
\begin{equation}\label{eq:multiply}
\al_i(x) u_{i, j_i} = u_{i, j_i} x \qand v_{i, j_i} \al_i(x) = x v_{i, j_i}
\end{equation}
for every $x \in \B(\H)$.
The following proposition will be essential for our analysis of the bicommutant.

\begin{proposition}\label{P:com al}
Let $\al$ be an endomorphism of $\B(\H)$ induced by an invertible row operator $u = [u_i]_{i \in [n]}$ for some $n \in \bZ_+ \cup \{\infty\}$.
Then for any $x, y \in \B(\H)$ we have that
\[
\al(x) y = y \al(x) \qiff x \cdot v_j y u_k = v_j y u_k \cdot x \foral j, k \in [n]
\]
where $v = [v_i]_{i \in [n]}$ is the inverse of $u$.
\end{proposition}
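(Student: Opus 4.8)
The statement is a biconditional relating commutation with $\al(x)$ to a system of commutation relations involving the "matrix entries" $v_j y u_k$ of $y$ relative to the Cuntz-type family. The plan is to exploit the two identities in~\eqref{eq:multiply}, namely $\al(x) u_i = u_i x$ and $v_i \al(x) = x v_i$ for all $x \in \B(\H)$, together with the defining relations $v_j u_k = \de_{j,k} I_\H$ and $\sum_i u_i v_i = I_\H$ of an invertible row operator. The forward direction should be a short computation: assuming $\al(x) y = y \al(x)$, I would compute $x \cdot v_j y u_k$ by inserting the identity $x = v_j \al(x) u_j$ (valid since $v_j \al(x) = x v_j$ and then $v_j u_j = I$ — more precisely $v_j \al(x) u_j = x v_j u_j = x$), so that
\[
x \cdot v_j y u_k = v_j \al(x) u_j \cdot v_j y u_k.
\]
Hmm, this introduces an extra $u_j v_j$; instead the cleaner route is $x \cdot v_j y u_k = v_j \al(x) y u_k = v_j y \al(x) u_k = v_j y u_k \cdot x$, using $x v_j = v_j \al(x)$ on the left, the hypothesis $\al(x) y = y \al(x)$ in the middle, and $\al(x) u_k = u_k x$ on the right. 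That gives the forward implication immediately for all $j,k \in [n]$.

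\textbf{The converse direction.} Assuming $x \cdot v_j y u_k = v_j y u_k \cdot x$ for all $j,k$, I need to recover $\al(x) y = y \al(x)$. The idea is to reassemble $y$ (and $\al(x)$) from the pieces $v_j y u_k$ using the resolution of the identity $\sum_i u_i v_i = I_\H$. Writing $\al(x) = \sum_{i} u_i x v_i$ by definition, and $y = \sum_{j} u_j v_j \, y \, \sum_k u_k v_k = \sum_{j,k} u_j (v_j y u_k) v_k$, I would compute
\begin{align*}
\al(x) y
&= \Big( \sum_i u_i x v_i \Big) \Big( \sum_{j,k} u_j (v_j y u_k) v_k \Big)
= \sum_{i,j,k} u_i x (v_i u_j)(v_j y u_k) v_k \\
&= \sum_{i,k} u_i x (v_i y u_k) v_k
\end{align*}
using $v_i u_j = \de_{i,j} I_\H$. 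Applying the hypothesis $x (v_i y u_k) = (v_i y u_k) x$ this equals $\sum_{i,k} u_i (v_i y u_k) x v_k = \sum_{i,k} u_i (v_i y u_k)(v_k \al(x))$, where in the last step I use $x v_k = v_k \al(x)$ from~\eqref{eq:multiply}; then collapsing $\sum_k u_k v_k$ appropriately and $\sum_i u_i v_i = I$ yields $y \al(x)$. One must be a little careful that the interchange of the (possibly infinite) sums with the bounded operators is legitimate in the strong operator topology; since the sums $\sum_i u_i v_i$ converge strongly to $I$ and multiplication by a fixed bounded operator is strongly continuous on bounded sets, and the partial sums here are uniformly bounded (by the remarks following Definition~\ref{D:ubo defn} and the hypothesis that $u,v$ are genuine inverses), this is routine.

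\textbf{Main obstacle.} I expect the only genuine subtlety to be the bookkeeping with infinite sums when $n = \infty$: one needs the partial-sum nets $\sum_{i \in F} u_i v_i$ to be uniformly bounded so that strong convergence is preserved under left and right multiplication, and one needs to know that $\sum_{i,j,k}$ can be reorganized as an iterated sum. The uniform bound $\| \sum_{i \in F} u_i v_i \| \le 1$ recorded right after the definition of an invertible row operator handles exactly this, so no new estimate is required — the argument is essentially the finite-$n$ computation with a standard approximation wrapper. A secondary cosmetic point is that $\al$ was only shown to be well defined on $\A$ in~\eqref{eq:uba defn}, but the paragraph preceding Proposition~\ref{P:com al} extends it to all of $\B(\H)$, and the relations~\eqref{eq:multiply} are stated in that generality, so $x, y$ ranging over $\B(\H)$ causes no difficulty.
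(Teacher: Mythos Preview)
Your proposal is correct and matches the paper's proof almost exactly. The forward direction is identical (using $x v_j = v_j \al(x)$, the hypothesis, then $\al(x) u_k = u_k x$). For the converse the paper first deduces $v_j \al(x) y u_k = v_j y \al(x) u_k$ and then sandwiches by $\sum_j u_j(\,\cdot\,)\sum_k v_k$, whereas you expand $\al(x)$ and $y$ first and then collapse using $v_i u_j = \de_{i,j}$; these are the same computation rearranged, and your remarks on the boundedness of the partial sums for $n=\infty$ are appropriate (the paper leaves this implicit).
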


\begin{proof}
Suppose first that $\al(x) y = y \al(x)$.
Then it follows that
\begin{align*}
x v_j y u_k
& =
v_j \al(x) y u_k
=
v_j y \al(x) u_k
=
v_j y u_k x
\end{align*}
for all $j, k \in [n]$.
Conversely if $x v_j y u_k = v_j y u_k x$ for all $j, k \in [n]$ then equation (\ref{eq:multiply}) yields
\begin{align*}
v_j \al(x) y u_k
& =
x v_j y u_k
=
v_j y u_k x
=
v_j y \al(x) u_k.
\end{align*}
Therefore we obtain
\begin{align*}
\al(x) y
& =
\sum_{j \in [n]} \sum_{k \in [n]} u_j (v_j \al(x) y u_k) v_k
=
\sum_{j \in [n]} \sum_{k \in [n]} u_j (v_j y \al(x) u_k) v_k
=
y \al(x)
\end{align*}
and the proof is complete.
\end{proof}

\begin{remark}
If $\al \in \End(\A)$ is induced by an invertible row operator $u$ then $\al$ extends to an endomorphism of $\A''$.
Indeed by Proposition \ref{P:com al} we have that $v_j y u_k \in \A'$ for all $y \in \A'$ since $\A' \subseteq \al(\A)'$.
Hence if $z \in \A''$ then $z v_j y u_k = v_j y u_k z$ for all $y \in \A'$.
Applying Proposition \ref{P:com al} again yields $\al(z) \in \A''$.

Therefore given a w*-dynamical system $(\A, \{\al_i\}_{i \in [d]})$ where each $\al_i$ is implemented by an invertible row operator $u_i$ then we automatically have the induced systems $(\B(\H), \{\al_i\}_{i \in [d]})$ and $(\A'', \{\al_i\}_{i \in [d]})$.
Hence the w*-semicrossed products
\[
\scp{\A}{\al}{\L_d} \, , \, \, \scp{\A}{\al}{\R_d} \, , \, \, \scp{\B(\H)}{\al}{\L_d} \, , \, \, \scp{\B(\H)}{\al}{\R_d} \, , \, \, \scp{\A''}{\al}{\L_d} \, , \, \, \scp{\A''}{\al}{\R_d}
\]
are all well defined.
\end{remark}

There are also two more algebras linked to our analysis.
Suppose that $\{\al_i\}_{i \in [d]}$ are endomorphisms of $\B(\H)$ and each $\al_i$ is induced by an invertible row operator $u_i$.
Then we can form the free semigroup $\bF_+^N$ for $N = n_1 + \cdots + n_d$.
Since we want to keep track of the generators we write
\[
\bF_+^N = \sca{(i, j) \mid i \in [d], j \in [n_i]} = \ast_{i \in [d]} \bF_+^{n_i}.
\]
We fix the operators
\[
V_{i,j} = u_{i,j} \otimes \Bl_i \qand W_{i,j} = u_{i,j} \otimes \Br_i \; \foral \; (i,j) \in ([d], [n_i])
\]
and the representation $\rho \colon \B(\H) \to \B(\H \otimes \ell^2(\bF_+^d))$ with $\rho(x) = x \otimes I$.

\begin{definition}
With the aforementioned notation, we define the spaces
\[
\scp{\A'}{u}{\L_d} := \ol{\spn}^{\textup{w*}}\{V_{i,j} \rho(y) \mid (i,j) \in ([d], [n_i]), y \in \A' \}
\]
and
\[
\scp{\A'}{u}{\R_d} := \ol{\spn}^{\textup{w*}}\{W_{i,j} \rho(y) \mid (i,j) \in ([d], [n_i]), y \in \A' \}.
\]
\end{definition}

Notice here that for a word $\bo{w} = (\mu_k, j_{\mu_k}) \dots (\mu_1, j_{\mu_1}) \in \bF_+^N$ we have
\begin{align*}
V_{\bo{w}}
& = 
L_{\mu_k} \rho(u_{\mu_k, j_{\mu_k}}) \cdots L_{\mu_1} \rho(u_{\mu_1, j_{\mu_1}}) 
 =
L_{\mu_k \dots \mu_1} \rho(u_{\bo{w}}).
\end{align*}
The generators satisfy a set of covariance relations which we will use to show that the above spaces are algebras.

\begin{proposition}\label{P:com is alg}
Let $(\A, \{\al_i\}_{i \in [d]})$ be a w*-dynamical system such that each $\al_i$ is implemented by an invertible row operator $u_i$.
Then
\[
\scp{\A'}{u}{\L_d} = \ol{\alg}^{\textup{w*}} \{ V_{\bo{w}} \rho(y) \mid \bo{w} \in \bF_+^N, y \in \A'\}
\]
and
\[
\scp{\A'}{u}{\R_d} = \ol{\alg}^{\textup{w*}} \{ W_{\bo{w}} \rho(y) \mid \bo{w} \in \bF_+^N, y \in \A'\}
\]
where $\bF_+^N = \sca{(i,j) \mid i \in [d], j \in [n_i]}$.
\end{proposition}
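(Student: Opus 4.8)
The plan is to show the two set-equalities by a double inclusion, and by symmetry it suffices to treat the $\L_d$-case. Write $\S := \ol{\spn}^{\textup{w*}}\{V_{i,j}\rho(y) \mid (i,j)\in([d],[n_i]), y\in\A'\}$ and $\mathcal{B} := \ol{\alg}^{\textup{w*}}\{V_{\bo w}\rho(y) \mid \bo w\in\bF_+^N, y\in\A'\}$. The inclusion $\S\subseteq\mathcal{B}$ is immediate, since each generator $V_{i,j}\rho(y)$ of $\S$ is of the form $V_{\bo w}\rho(y)$ with $\bo w$ the length-one word $(i,j)$, hence lies in $\mathcal{B}$; taking w*-closed spans then gives $\S\subseteq\mathcal{B}$ because $\mathcal{B}$ is a w*-closed subspace.

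For the reverse inclusion $\mathcal{B}\subseteq\S$, the key step is to establish the covariance relation that lets us pull a $\rho(y)$ past a $V_{i,j}$. The relevant identity is, for $y\in\A'$,
\[
\rho(y)\, V_{i,j} = (y\otimes I)(u_{i,j}\otimes \Bl_i) = (y\, u_{i,j})\otimes\Bl_i = (u_{i,j}\, v_{i,j}\, y\, u_{i,j})\otimes\Bl_i = V_{i,j}\,\rho(v_{i,j}\, y\, u_{i,j}),
\]
where the middle equality uses $\sum_{k} u_{i,k} v_{i,k} = I_\H$ restricted appropriately — more carefully, one uses $u_{i,j} = u_{i,j}(\sum_k v_{i,k} u_{i,k})$ and $v_{i,k}u_{i,j} = \de_{j,k}I_\H$ from the remark after the definition of invertible row operator, so that $y\, u_{i,j} = u_{i,j}\, v_{i,j}\, y\, u_{i,j}$. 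Crucially, by Proposition \ref{P:com al}, $y\in\A'\subseteq\al_i(\A)'$ implies $v_{i,j}\, y\, u_{i,k}\in\A'$ for all $j,k$; in particular $v_{i,j}\, y\, u_{i,j}\in\A'$, so the right-hand side $V_{i,j}\,\rho(v_{i,j}yu_{i,j})$ is again a generator of $\S$. Iterating, for a word $\bo w$ we obtain $\rho(y)\,V_{\bo w} = V_{\bo w}\,\rho(v_{\bo w}\, y\, u_{\bo w})$ with $v_{\bo w}yu_{\bo w}\in\A'$, using that $\A'$ is an algebra.

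With this in hand, a product of two generators of $\S$ collapses into $\S$:
\[
V_{i,j}\rho(y)\cdot V_{k,l}\rho(z) = V_{i,j}\,V_{k,l}\,\rho(v_{k,l}\, y\, u_{k,l})\,\rho(z) = V_{(i,j)(k,l)}\,\rho\bigl((v_{k,l}\,y\,u_{k,l})\,z\bigr),
\]
and $(v_{k,l}yu_{k,l})z\in\A'$ since $\A'$ is an algebra. By induction the product of finitely many generators of $\S$ is a single element $V_{\bo w}\rho(y')$ with $y'\in\A'$, hence lies in $\S$. Therefore the algebraic span $\alg\{V_{i,j}\rho(y)\}$ is contained in $\S$, and since $\S$ is w*-closed and contains every generator $V_{\bo w}\rho(y)$ of $\mathcal B$, we get $\mathcal B\subseteq\S$. (One should also note $\rho(y')\in\S$ for $y'\in\A'$ by taking the empty word, so the unital/algebra structure is consistent.) The main obstacle is bookkeeping: verifying that the shuffled coefficient $v_{\bo w}yu_{\bo w}$ genuinely stays in $\A'$ at each stage, which is exactly what Proposition \ref{P:com al} supplies, combined with the fact that $\A'$ is closed under products; once that is granted the computation is purely formal, using only equation (\ref{eq:multiply}) and the relations $v_{i,j}u_{i,k}=\de_{j,k}I_\H$.
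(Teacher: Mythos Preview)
Your argument contains a genuine error in the covariance step. You claim that
\[
y\,u_{i,j} \;=\; u_{i,j}\,v_{i,j}\,y\,u_{i,j},
\]
but this is false in general. The relations for an invertible row operator are $v_{i,k}u_{i,j}=\de_{k,j}I_\H$ and $\sum_{k}u_{i,k}v_{i,k}=I_\H$; in particular $u_{i,j}v_{i,j}$ is \emph{not} the identity (for a Cuntz family it is a proper range projection). Your attempted justification ``$u_{i,j}=u_{i,j}(\sum_k v_{i,k}u_{i,k})$'' is meaningless: each summand $v_{i,k}u_{i,k}$ equals $I_\H$, so that sum is $n_i\cdot I_\H$ or diverges. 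Inserting the correct identity $I_\H=\sum_k u_{i,k}v_{i,k}$ on the left of $y\,u_{i,j}$ gives instead
\[
y\,u_{i,j} \;=\; \sum_{k\in[n_i]} u_{i,k}\,(v_{i,k}\,y\,u_{i,j}),
\]
a genuine sum over $k$ with no reason for the off-diagonal terms $v_{i,k}yu_{i,j}$ ($k\neq j$) to vanish. Consequently the correct covariance relation is
\[
\rho(y)\,V_{i,j} \;=\; \sum_{k\in[n_i]} V_{i,k}\,\rho(v_{i,k}\,y\,u_{i,j}),
\]
which is exactly what the paper proves. Proposition~\ref{P:com al} still ensures each $v_{i,k}yu_{i,j}\in\A'$, so each summand lies in the span $\S$; but now one must check that the (possibly infinite) sum converges w* to an element of $\S$, which the paper handles by bounding the finite partial sums using $\|\sum_{k\in F} u_{i,k}v_{i,k}\|\le 1$. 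Your single-term identity, and hence the clean induction built on it, only survives when every $n_i=1$.
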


\begin{proof}
We prove the left version.
The right version follows by similar arguments.
It suffices to show that $\rho(y) L_i \rho(u_{i,j})$ is in $\scp{\A'}{u}{\L_d}$ for all $y \in \A'$ and $(i,j) \in ([d], [n_i])$.
Suppose that $v_i = [v_{i,j_i}]_{j_i \in [n_i]}$ is the inverse of $u_i$.
Then we can write
\[
y 
= 
\sum_{k \in [n_i]} \sum_{l \in [n_i]} u_{i, k} v_{i, k} y u_{i, l} v_{i, l}
=
\sum_{k \in [n_i]} \sum_{l \in [n_i]} u_{i, k} y_{i, k, l} v_{i, l}
\]
where $y_{i,k, l} := v_{i, k} y u_{i, l}$.
Proposition \ref{P:com al} yields that $y_{i,k, l}$ is in $\A'$ since $y \in \A' \subseteq \al_i(\A)'$.
Therefore we have that
\begin{align*}
y u_{i,j}
& =
\sum_{k \in [n_i]} \sum_{l \in [n_i]} u_{i, k} y_{i,k, l} v_{i, l} u_{i,j}
 =
\sum_{k\in [n_i]} u_{i, k} y_{i, k, j}
\end{align*}
which gives that
\begin{align*}
\rho(y) L_i \rho(u_{i,j})
& = 
L_i \rho(y) \rho(u_{i,j})
=
\sum_{k\in [n_i]} L_i \rho(u_{i, k} y_{i, k, j})
=
\sum_{k\in [n_i]} V_{i,k} \rho(y_{i, k, j}).
\end{align*}
Recall that $\| \sum_{k \in F} u_{i, k} v_{i, k} \| \leq 1$ for every finite subset $F$ of $[n_i]$, hence
\[
\| \sum_{k\in F} u_{i, k} y_{i, k, j} \|
=
\| \sum_{k\in F} u_{i, k} v_{i, k} y u_{i, j} \|
\leq 
\nor{y} \nor{u_{i, j}}.
\]
Thus the net $\left( \sum_{k \in F} u_{i, k} y_{i, k, j} \right)_{\{F:\textup{finite}\}}$ is bounded and the sum above converges in the w*-topology.
Hence the element $\rho(y) L_i \rho(u_{i,j})$ is in $\scp{\A'}{u}{\L_d}$.
\end{proof}

\subsection{Dynamical systems over $\bZ_+^d$}\label{Ss:com}

Similarly we define a (unital) w*-dynamical system $(A,\al,\bZ_+^d)$ to consist of a semigroup action $\al\colon \bZ_+^d \to \End(\A)$ such that
\[
\sup \{ \nor{\al_{\un{n}}} \mid \un{n} \in \bZ_+^d\} < \infty.
\]
Since the action is generated by $d$ commuting endomorphisms $\al_\Bi$ it suffices to have that $\sup\{ \nor{\al_\Bi^n} \mid n \in \bZ_+\} < \infty$ for all $i \in [d]$.
Consequently commuting spatial actions $\al_{\Bi}$ that are uniformly bounded in the sense of Definition \ref{D:uba defn} induce unital w*-dynamical systems.

Examples are given by actions implemented by a unitarizable semigroup homomorphism of $\bZ_+^d$ in $\B(\H)$.
However our setting accommodates cases where each $\al_{\Bi}$ may be implemented by an invertible element \emph{separately}.
This gives us the opportunity to tackle more commuting actions.
Let us illustrate this with an example.

\begin{example}\label{E:Weyl}
Every pair of unitaries $U, V$ that satisfy Weyl's relation $UV = \la VU$ for $\la \in \bT$ obviously implements two commuting actions $\al_{\bo{1}} = \ad_U$ and $\al_{\bo{2}} = \ad_V$ on $\B(\H)$.
In fact it is not difficult to show that every action $\al \colon \bZ_+^2 \to \Aut(\B(\H))$ is indeed of this form: $\al_{\bo{1}}$ and $\al_{\bo{2}}$ will be implemented by unitaries that commute modulo a $\la \in \bT$.
This follows in the same way as in \cite[Theorem 9.3.3]{KR86}.
\end{example}

\begin{remark}\label{R: Laca}
Results of Laca \cite{Lac93} give a general criterion for commuting normal $*$-endomorphisms of $\B(\H)$.
Suppose that $\al, \be \in \End(\B(\H))$ commute and are given by
\[
\al(x) = \sum_{i \in [n]} s_i x s_i^* \qand \be(x) = \sum_{j \in [m]} t_j x t_j^* 
\]
for the Cuntz families $\{s_i\}_{i \in [n]}$ and $\{t_j\}_{j \in [m]}$.
Therefore
\[
\sum_{i \in [n]} \sum_{j \in [m]} s_i t_j x t_j^* s_i^* = \sum_{j \in [m]} \sum_{i \in [n]} t_j s_i x s_i^* t_j^*.
\]
Notice that on each side we sum up orthogonal representations of $\B(\H)$ and thus we can take the limits so that
\[
\sum_{(i,j) \in [n] \times [m]} s_i t_j x t_j^* s_i^* = 
\sum_{(i,j) \in [n] \times [m]} t_j s_i x s_i^* t_j^*.
\]
We may see the families $\{s_i t_j\}_{(i,j) \in [n] \times [m]}$ and $\{t_j s_i\}_{(i,j) \times [n] \times [m]}$ as representations of the Cuntz algebra $\O_{n \cdot m}$.
Applying \cite[Proposition 2.2]{Lac93} gives a unitary operator $W = [w_{(k,l), (i,j)}]$ in $\M_{nm}(\bC)$ such that
\begin{align*}
t_j s_i 
 = 
\sum_{(k,l) \in [n] \times [m]} w_{(k,l), (i,j)} s_k t_l.
\end{align*}

This criterion can be used to research the class of endomorphisms $\al$ that commute with a fixed $\be$.
We show how this can be done in the next two examples.
\end{remark}

\begin{example}\label{E:com Cuntz 1}
For this example fix $\H = \ell^2(\bZ_+)$ and let the Cuntz family
\[
S_1 e_n = e_{2n} \qand S_2 e_n = e_{2n+1}.
\]
Let $U \in \B(\H)$ be a unitary and fix the induced actions
\[
\al(x) = U x U^* \qand \be(x) = S_1 x S_1^* + S_2 x S_2^*.
\]
We will show that $\al$ and $\be$ commute if and only if
\begin{equation}\label{eq:u form 1}
U = \la \diag\{\mu^{\phi(n)} \mid n \in \bZ_+\} \qfor \la, \mu \in \bT,
\end{equation}
where $\phi(n)$ is the sequence of the binary weights of $n$, i.e.
\[
\phi(n) = \# \text{ of $1$'s appearing in the binary expansion of $n$}.
\]

First suppose that $\al$ commutes with $\be$. 
By Remark \ref{R: Laca} there exists a unitary
\[
W = \begin{bmatrix} a & c \\ b & d \end{bmatrix} \in \M_2(\bC)
\]
such that
\[
US_1 = a S_1 U + b S_2 U \qand US_2 = c S_1U + d S_2 U.
\]
Below we write 
\[
U e_k = \sum_n \la^{(k)}_n e_n \foral k \in \bZ_+.
\]
Since $S_1 e_0 = e_0$ we have
\begin{align*}
\sum_n \la_n^{(0)} e_n 
& = U e_0 = U S_1 e_0 \\
& = a S_1 U e_0 + b S_2 U e_0 \\
& =  \sum_n a \la_n^{(0)} e_{2n} + b \la_n^{(0)} e_{2n+ 1}.
\end{align*}
We thus obtain
\begin{equation}\label{eq:la soln 1}
\la_0^{(0)} = a \la_0^{(0)} \qand 
\la_{2n}^{(0)} = a \la_n^{(0)}, 
\la_{2n+1}^{(0)} = b \la_{n}^{(0)}
\foral n \geq 1.
\end{equation}
Therefore if $\la_0^{(0)} = 0$ then $Ue_0 = 0$ which is a contradiction to $U$ being a unitary.
Hence $a = 1$ from the first equation and thus $b = c = 0$ and $|d| = 1$, since $W$ is a unitary.
Thus we obtain
\[
US_1 = S_1 U \qand US_2 = d S_2 U.
\]
Consequently we get
\[
U = US_1S_1^* + US_2 S_2^* = S_1 U S_1^* + d S_2 U S_2^*.
\]
In addition, applying $b=0$ in equality (\ref{eq:la soln 1}) gives that
\[
\begin{cases}
\la^{(0)}_1 = b \la^{(0)}_0 = 0, \\
\la_2^{(0)} = a \la_1^{(0)} = 0, \\
\la_3^{(0)} = b \la_2^{(0)} = 0, \\
\la_4^{(0)} = a \la_2^{(0)} = 0, \\
\phantom{o} \vdots
\end{cases}
\]
and inductively we have that $\la_n^{(0)} = 0$ for all $n \geq 1$.
Hence $Ue_0 = \la_0^{(0)} e_0$.
In particular we get that $|\la_0^{(0)}| = 1$ and therefore
\begin{align*}
U = \begin{bmatrix} \la_0^{(0)} & 0 \\ 0 & \ast \end{bmatrix}
\end{align*}
when decomposing $\H = \sca{e_0} \oplus \sca{e_0}^{\perp}$.
Now we apply for $e_1$ to obtain
\begin{align*}
U e_1 = d S_2 U S_2^* e_1 = d S_2 U e_0 = \la_0^{(0)} d e_1
\end{align*}
from which we get
\begin{align*}
\la_{1}^{(1)} = \la_0^{(0)} d \qand
\la_{n}^{(1)} = 0 \text{ for $n \neq 1$}.
\end{align*}
As $\la_{1}^{(1)}$ has modulus $1$ we then get that
\[
U =
\begin{bmatrix}
\la_0^{(0)} & 0 & 0 \\ 0 & \la_0^{(0)} d & 0 \\ 0 & 0 & \ast 
\end{bmatrix}
\]
Now applying for $e_2$ we get
\[
U e_2 = S_1 U S_1^* e_2 = S_1 U e_1 = \la_0^{(0)} d e_2
\]
and therefore
\[
U =
\begin{bmatrix}
\la_0^{(0)} & 0 & 0 & 0 \\ 0 & \la_0^{(0)} d & 0 & 0 \\ 0 & 0 & \la_0^{(0)} d & 0 \\ 0 & 0 & 0 & \ast 
\end{bmatrix}.
\]
Hence we have verified equation (\ref{eq:u form 1}) for $n = 0, 1, 2$ with 
\[\la = \la^{(0)}_0 \qand \mu = d.
\]
Now suppose that $U e_n = \la \mu^{\phi(n)} e_n$ holds for every $n < 2k$ with $k \neq 0$; then
\[
U e_{2k} = S_1 U S_1^* e_{2k} = S_1 U e_k = \la \mu^{\phi(k)} e_{2k}
\]
as $\phi(2k) = \phi(k)$.
On the other hand if $U e_n = \la \mu^{\phi(n)} e_n$ holds for every $n < 2k + 1$ then
\[
U e_{2k + 1} = \mu S_2 U S_2^* e_{2k +1} = \mu S_2 U e_{k} = \la \mu^{\phi(k) + 1} e_{2k + 1}
\]
since
\[
\phi(2k+1) = \phi(2k) + 1 = \phi(k) + 1.
\]
By using strong induction we have that $U$ satisfies equation (\ref{eq:u form 1}).

Conversely suppose that $U$ is as in equation (\ref{eq:u form 1}).
We will show that the induced actions $\al$ and $\be$ commute.
First we consider $x = e_i \otimes e_j^*$, the rank one operator sending $e_j$ to $e_i$.
A direct computation shows that
\begin{align*}
\al \be (x) e_n
& =
\begin{cases}
d^{\phi(2i) - \phi(2k)} e_{2i} \sca{e_k, e_j} & \text{ if } n = 2k, \\
d^{\phi(2i + 1) - \phi(2k +1)} e_{2i + 1} \sca{e_k, e_j} & \text{ if } n = 2k+1.
\end{cases}
\end{align*}
On the other hand we have that
\begin{align*}
\be \al(x) e_n
& = 
\begin{cases}
d^{\phi(i) - \phi(k)} e_{2i} \sca{e_k, e_j} & \text{ if } n =2k,\\
d^{\phi(i) - \phi(k)} e_{2i + 1} \sca{e_k, e_j} & \text{ if } n=2k+1.
\end{cases}
\end{align*}
Since 
\[
\phi(2k) - \phi(2i) = \phi(k) - \phi(i)
\]
and
\[
\phi(2k+1) - \phi(2i+1) = \phi(2k) + 1 - \phi(2i) - 1 = \phi(k) - \phi(i)
\]
we obtain that $\al \be(x) = \be \al(x)$.
Since $\al, \be$ are sot-continuous (being implemented by operators), passing to sot-limits yields that $\al$ and $\be$ commute.
\end{example}

\begin{example}\label{E:com Cuntz 2}
For this example we let $\H = \ell^2(\bZ)$ and the Cuntz family
\[
S_1 e_n = e_{2n} \qand S_2 e_n = e_{2n+1}.
\]
Let $U \in \B(\H)$ be a unitary and write $\ell^2(\bZ) = H_1 \oplus H_2$ for
\[
H_1 = \sca{e_n \mid n \geq 0} \qand H_2 = \sca{e_n \mid n \leq -1}.
\]
We claim that the actions induced by $U$ and $\{S_1, S_2\}$ commute if and only if $U$ attains one of the forms
\begin{equation}\label{eq:u form 2}
U = \la I_{H_1} \oplus \mu I_{H_2} 
\; \text{ or } \;
U =
\begin{bmatrix} 0 & \mu w^* \\ \la w & 0 \end{bmatrix}
\end{equation}
where $\la, \mu \in \bT$ and $w \in \B(H_1, H_2)$ is the unitary with $w e_{n} = e_{-n-1}$.

If the actions commute then by Remark \ref{R: Laca} there exists a unitary
\[
W = \begin{bmatrix} a & c \\ b & d \end{bmatrix} \in \M_2(\bC)
\]
such that
\[
US_1 = a S_1 U + b S_2 U \qand US_2 = c S_1U + d S_2 U.
\]
Below we write 
\[
U e_k = \sum_n \la^{(k)}_n e_n \foral k \in \bZ.
\]
Since $S_1 e_0 = e_0$ we obtain
\begin{align*}
\sum_n \la_n^{(0)} e_n
& =
U e_0
=
U S_1 e_0 \\
& =
(a S_1 + b S_2) U e_0 \\
& =
\sum_n a \la_n^{(0)} e_{2n} + b \la_n^{(0)} e_{2n +1}.
\end{align*}
Consequently
\[
\la_{2k}^{(0)} = a \la_k^{(0)} \qand 
\la_{2k + 1}^{(0)} = b \la_k^{(0)} \;
\foral k \in \bZ.
\]

If $a =1$ then $b=0$ as $|a|^2 + |b|^2 = 1$.
Now, if $a \neq 1$ then $\la_0^{(0)} = 0$ and thus $\la_n^{(0)} = 0$ for all $n \geq 0$.
If, in addition, $a \neq 0$ then also $b \neq 1$ and so $\la_{-1}^{(0)} = 0$ which implies that $\la_n^{(0)} = 0$ for all $n \leq 0$.
This contradicts that $U$ is a unitary.
Therefore if $a \neq 1$ then it must be that $a = 0$ in which case we get that $|b| = 1$.
However a symmetrical argument shows that if $a =0$ and $b \neq 1$ then $U e_0=0$ which is a contradiction.
Therefore if $a \neq 1$ then $a = 0$ and $b = 1$.
Consequently we have the following cases:
\[
\textup{(i)} \; a = 1, b = 0 \quad \textup{or} \quad \textup{(ii)} \; a = 0, b =1.
\]

\medskip

\noindent $\bullet$ Case (i).
When $a=1$ and $b=0$ then $c=0$ and $d \in \bT$ and therefore
\[
US_1 = S_1 U \qand US_2 = d S_2 U
\]
which we can rewrite as
\[
U = S_1 U S_1^* + d S_2 U S_2^*.
\]
Applying for $e_{-1}$ we obtain
\[
\sum_n \la_n^{(-1)} e_n = U e_{-1} = d S_2 U S_2^* e_{-1} = \sum_n d \la_n^{(-1)} e_{2n+1}.
\]
Hence we get that
\[
\begin{cases}
\la_0^{(-1)} = 0\\
\la_1^{(-1)} = d \la_0^{(-1)} = 0\\
\la_2^{(-1)} = 0 \\
\la_3^{(-1)} = d \la_1^{(-1)} = 0 \\
\phantom{o} \vdots
\end{cases}
\qand
\begin{cases}
\la_{-1}^{(-1)} = d \la_{-1}^{(-1)}\\
\la_{-2}^{(-1)} = 0\\
\la_{-3}^{(-1)} = d \la_{-1}^{(-1)} \\
\la_{-4}^{(-1)} = 0 \\
\phantom{oo} \vdots
\end{cases}
\]
It follows that $d=1$ otherwise $Ue_{-1} = 0$ which is a contradiction.
Therefore we derive that
\[
U = S_1 U S_1^* + S_2 U S_2^*.
\]
Hence we have that $U e_0 = \la e_0$ for $\la = \la_0^{(0)}$ and so $U e_n = \la e_n$ when $n \geq 0$ as in Example \ref{E:com Cuntz 1}.
On the other hand $U e_{-1} = \mu e_{-1}$ for $\mu = \la_{-1}^{(-1)}$ and so $U e_n = \mu e_n$ when $n < 0$ by similar computations.
Thus it follows that
\[
U = \la I_{H_1} \oplus \mu I_{H_2} \qfor \la, \mu \in \bT.
\]

\medskip

\noindent $\bullet$ Case (ii).
When $a = 0$ and $b=1$ then $c \in \bT$ and $d=0$ in which case we have
\[
U S_1 = S_2 U \qand U S_2 = c S_1 U
\]
or equivalently
\[
U = S_2 U S_1^* + c S_1 U S_2^*.
\]
By applying on $e_{-1}$ we get
\[
\begin{cases}
\la_0^{(-1)} = c \la_0^{(-1)}, \\
\la_1^{(-1)} = \la_3^{(-1)} = \cdots = 0, \\
\la_2^{(-1)} = c \la_{1}^{(-1)} = 0, \\
\la_4^{(-1)} = \la_6^{(-1)} = \cdots  = 0,
\end{cases}
\qand
\begin{cases}
\la_{-1}^{(-1)} = \la_{-3}^{(-1)} = \cdots = 0, \\
\la_{-2}^{(-1)} = c \la_{-1}^{(-1)} = 0, \\
\la_{-4}^{(-1)} = \la_{-6}^{(-1)} = \cdots  = 0.
\end{cases}
\]
If $c \neq 1$ then we would get that $U e_{-1} = 0$ which is a contradiction.
Therefore we obtain that $c=1$ and thus
\begin{equation}\label{eq:u 2}
U = S_2 U S_1^* + S_1 U S_2^*.
\end{equation}
In this case we have that
\[
U e_0 = \la e_{-1} \qand U e_{-1} = \mu e_{0}
\]
for $\la, \mu \in \bT$.
We claim that
\[
U =
\begin{bmatrix} 0 & \mu w^* \\ \la w & 0 \end{bmatrix}
\]
for $\ell^2(\bZ) = H_1 \oplus H_2$ and the unitary $w \in \B(H_1, H_2)$ with $w e_{n} = e_{-n-1}$, i.e.
\[
Ue_n =
\begin{cases}
\la e_{-n - 1} & \text{ if } n \geq 0, \\
\mu e_{-n-1} & \text{ if } n \leq -1.
\end{cases}
\]
Indeed this holds for $n=0, -1$.
Let $n \geq 0$ and suppose it holds for every $0 \leq k < n$.
If $n = 2k$ then by the inductive hypothesis and equation (\ref{eq:u 2}) we get
\begin{align*}
U e_{n} 
& = S_2 U S_1^* e_{2k} = S_2 U e_k 
 = \la S_2 e_{-k -1} = \la e_{-2k - 1} = \la e_{- n - 1} 
\end{align*}
whereas if $n = 2k +1$ we get
\begin{align*}
U e_n
& = S_1 U S_2^* e_{2k +1} = S_1 U e_k 
 = \la S_1 e_{-k-1} = \la e_{-2k -2} = \la e_{-n - 1}.
\end{align*}
A similar computation holds for $n \leq -1$.
Strong induction then completes the proof of the claim.

Conversely if a unitary $U$ satisfies equation (\ref{eq:u form 2}) then $\ad_U$ either fixes or interchanges $S_1$ and $S_2$.
In either case we get
\[
US_1U^* y US_1^* U^* + US_2U^* y US_2^* U^* = S_1 y S_1^* + S_2 y S_2^*
\]
for all $y \in \B(\H)$.
Applying for $y = U x U^*$ yields that the actions induced by $U$ and $\{S_1, S_2\}$ commute.
\end{example}

Now we return to the definition of the semicrossed product for actions of $\bZ_+^d$.
On $\H \otimes \ell^2(\bZ_+^d)$ we define the representation $\pi \colon \A \to \B(\H \otimes \ell^2(\bZ_+^d))$ and the creation operators $L\colon \bZ_+^d \to \B(\H \otimes \ell^2(\bZ_+^d))$ by
\[
\pi(a) \xi \otimes e_{\un{n}} = \al_{\un{n}}(a)\xi \otimes e_{\un{n}}
\qand
L_{\Bi}\xi \otimes e_{\un{n}} = \xi \otimes e_{\Bi + \un{n}}.
\]
Notice here that due to commutativity of $\bZ_+^d$ we make no distinction between right and left versions.

\begin{definition}
Let $(\A,\al,\bZ_+^d)$ be a unital w*-dynamical system.
We define the w*-semicrossed product
\[
\scp{\A}{\al}{\bZ_+^d}:= \ol{\spn}^{\textup{w*}}\{L_{\un{n}} \pi(a) \mid a \in \A, \un{n} \in \bZ_+^d\}.
\]
\end{definition}

Again we can directly verify the covariance relations by applying on the elementary tensors.
In analogy to Proposition \ref{P:Fc al} we have the following proposition.
For its proof we may again invoke a Fej\'{e}r-type argument for the appropriate Fourier co-efficients induced by $\{U_{\un{s}}\}$ with $\un{s} \in [-\pi, \pi]^d$.

\begin{proposition}\label{P:Fc al com}
Let $(\A,\al,\bZ_+^d)$ be a unital w*-dynamical system.
Then an operator $T \in \B(\H \otimes \ell^2(\bZ_+^d))$ is in $\scp{\A}{\al}{\bZ_+^d}$ if and only if it is lower triangular and 
\[
G_{\un{m}}(T) 
= 
L_{\un{m}} \pi(a_{\un{m}}) \qfor a_{\un{m}} \in \A
\]
for all $\un{m} \in \bZ_+^d$.
\end{proposition}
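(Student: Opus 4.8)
The plan is to mirror the proof of Proposition~\ref{P:Fc al}, which becomes noticeably simpler over $\bZ_+^d$ since each homogeneous Fourier component of an element of the semicrossed product is now a single term $L_{\un{m}}\pi(a_{\un{m}})$ rather than a norm-bounded sum over a fixed degree. As there, the two ingredients are the gauge action implemented by $\{U_{\un{s}}\}_{\un{s}\in[-\pi,\pi]^d}$ (whose existence follows from the covariance relations, exactly as for the one-parameter action), the description of the Fourier coefficients of a lower triangular operator from Proposition~\ref{P:lower triangular com}, and the multi-dimensional Fej\'er lemma alluded to before the statement. I would prove the two implications separately.

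For the forward implication I would first note that each generator $L_{\un{n}}\pi(a)$ is lower triangular: a direct computation on elementary tensors gives $(L_{\un{n}}\pi(a))_{\un{m},\un{k}} = \de_{\un{m},\un{n}+\un{k}}\,\al_{\un{k}}(a)$, which vanishes unless $\un{k}\le\un{m}$. Since being lower triangular is a w*-closed condition, every $T\in\scp{\A}{\al}{\bZ_+^d}$ is lower triangular. The gauge action then yields $G_{\un{m}}(T)\in\scp{\A}{\al}{\bZ_+^d}$ for every $\un{m}$, and for $\un{m}\in\bZ_+^d$ Proposition~\ref{P:lower triangular com} gives $G_{\un{m}}(T) = L_{\un{m}}D_{\un{m}}$ where, using that $L_{\un{m}}$ is an isometry, $D_{\un{m}} = L_{\un{m}}^* G_{\un{m}}(T) = \sum_{\un{w}\in\bZ_+^d} T_{\un{m}+\un{w},\un{w}}\otimes p_{\un{w}}$. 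It remains to identify $D_{\un{m}}$ with some $\pi(a_{\un{m}})$, $a_{\un{m}}\in\A$. For this I would observe that $T\mapsto L_{\un{m}}^* G_{\un{m}}(T)$ is a w*-continuous linear map sending each generator into $\pi(\A)$ (indeed $L_{\un{m}}^* G_{\un{m}}(L_{\un{n}}\pi(a)) = \de_{\un{m},\un{n}}\pi(a)$), hence sending all of $\scp{\A}{\al}{\bZ_+^d}$ into $\ol{\pi(\A)}^{\textup{w*}}$; and $\pi(\A)$ is itself w*-closed, because (using normality of $\pi$) compression to $\H\otimes e_{\un{0}}$ together with $\al_{\un{0}}=\id$ identifies $\pi(\A)$ w*-homeomorphically with the w*-closed algebra $\A$. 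This gives $G_{\un{m}}(T) = L_{\un{m}}\pi(a_{\un{m}})$ with $a_{\un{m}}\in\A$, as required.

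For the converse, if $T$ is lower triangular with $G_{\un{m}}(T) = L_{\un{m}}\pi(a_{\un{m}})$ for $\un{m}\in\bZ_+^d$ (and $G_{\un{m}}(T)=0$ otherwise, by Proposition~\ref{P:lower triangular com}), then every $G_{\un{m}}(T)$ is already a generator of $\scp{\A}{\al}{\bZ_+^d}$, and the Ces\`aro means of $T$ attached to $\{U_{\un{s}}\}$ --- formed from the product of the one-variable Fej\'er kernels along the $d$ coordinate directions --- are finite linear combinations of these generators that converge to $T$ in the w*-topology; hence $T\in\scp{\A}{\al}{\bZ_+^d}$. The part I expect to need the most care is precisely this multi-dimensional Fej\'er lemma, namely that the iterated Ces\`aro averaging over the $d$ directions of $\ell^2(\bZ_+^d)$ recovers $T$ w*-continuously; this should follow by applying the one-variable version of Section~\ref{S:pre} coordinate by coordinate (equivalently, viewing $G_{\un{m}}$ as a composition of inflations of the $G_{m_i}$). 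The remaining step of checking that $D_{\un{m}}$ lands in $\pi(\A)$ is more routine and is handled by the compression argument above.
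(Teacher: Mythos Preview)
Your proposal is correct and follows essentially the same route the paper indicates: adapt the proof of Proposition~\ref{P:Fc al} using the multivariable gauge action $\{U_{\un{s}}\}$, Proposition~\ref{P:lower triangular com}, and the iterated Fej\'er lemma. The paper gives no further detail beyond this reference, so your write-up is in fact more complete; in particular, your compression argument showing that $\pi(\A)$ is w*-closed (hence that $D_{\un m}\in\pi(\A)$) makes explicit a step the paper leaves implicit, and your observation that the converse needs no boundedness estimate (since each $G_{\un m}(T)$ is a single generator rather than an infinite sum over $|\mu|=m$) is exactly the simplification one expects over $\bZ_+^d$.
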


Moreover we can proceed to a decomposition into subsequent one-dimen\-sional w*-semicrossed products.

\begin{proposition}\label{P:disintegrate}
Let $(\A, \al, \bZ_+^d)$ be a unital w*-dynamical system.
Then $\scp{\A}{\al}{\bZ_+^d}$ is unitarily equivalent to
\[
\left( \cdots \left( ( \scp{\A}{\al_{\bo{1}}}{\bZ_+} ) \ol{\times}_{\wh{\al}_{\bo{2}}} \bZ_+ \right) \cdots \right) \ol{\times}_{\wh{\al}_{\bo{d}}} \bZ_+
\]
where $\wh{\al}_{\bo{i}} = \al_{\bo{i}} \otimes^{(i-1)} \id$ for $i = 2, \dots, d$.
\end{proposition}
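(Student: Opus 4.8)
The plan is to reduce the $d$-dimensional statement to an iterated application of a one-step identification: if $(\A,\al,\bZ_+^d)$ is a system, then $\scp{\A}{\al}{\bZ_+^d}$ is unitarily equivalent to $(\scp{\A}{\al'}{\bZ_+^{d-1}}) \, \ol{\times}_{\wh{\al}_{\bo{d}}} \, \bZ_+$, where $\al'$ is the restriction of $\al$ to the subsemigroup generated by $\bo{1},\dots,\bo{d-1}$ and $\wh{\al}_{\bo{d}} = \al_{\bo{d}} \otimes^{(d-1)} \id$ acts on $\B(\H \otimes \ell^2(\bZ_+^{d-1}))$ restricted to $\scp{\A}{\al'}{\bZ_+^{d-1}}$. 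Given this one-step factorization, the full statement follows by downward induction on $d$, peeling off the last direction each time; the base case $d=1$ is vacuous. So the real content is the single splitting step.

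For the splitting step, first I would use the canonical unitary $\ell^2(\bZ_+^d) \cong \ell^2(\bZ_+^{d-1}) \otimes \ell^2(\bZ_+)$ coming from the direct-sum decomposition $\bZ_+^d = \bZ_+^{d-1} \oplus \bZ_+\bo{d}$ (writing $\un{n} = (\un{n}', n_d)$). Under the induced unitary $\H \otimes \ell^2(\bZ_+^d) \cong (\H \otimes \ell^2(\bZ_+^{d-1})) \otimes \ell^2(\bZ_+)$, the generators transform transparently: for $i < d$ the creation operator $L_{\bo{i}}$ becomes $L_{\bo{i}}' \otimes I_{\ell^2(\bZ_+)}$, while $L_{\bo{d}}$ becomes $I \otimes \Bl$ (the shift on $\ell^2(\bZ_+)$); and the representation $\pi$ of $\A$ decomposes as $\pi(a) \xi \otimes e_{\un{n}'} \otimes e_{n_d} = \al_{\un{n}'}\al_{\bo{d}}^{n_d}(a)\,\xi \otimes e_{\un{n}'} \otimes e_{n_d}$, which is exactly $\wh\pi$ of the one-variable system with base algebra $\scp{\A}{\al'}{\bZ_+^{d-1}}$ and endomorphism $\wh{\al}_{\bo{d}}$. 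Here one checks that $\wh{\al}_{\bo{d}}$, defined by $\wh{\al}_{\bo{d}}(L_{\un{m}'}\pi'(a)) = L_{\un{m}'}\pi'(\al_{\bo{d}}(a))$ on Fourier coefficients, is a well-defined w*-continuous completely bounded unital endomorphism of $\scp{\A}{\al'}{\bZ_+^{d-1}}$ — this is where commutativity of the $\al_{\bo{i}}$ is used, since it ensures $\al_{\bo{d}}$ intertwines correctly with the covariance relations defining the inner semicrossed product. Its uniform boundedness over powers is inherited from that of the ambient system, so the inner one-variable system is a bona fide unital w*-dynamical system.

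Having matched generators, I would conclude using the Fej\'er-lemma description of both sides: by Proposition \ref{P:Fc al com}, $T \in \scp{\A}{\al}{\bZ_+^d}$ iff $T$ is lower triangular with $G_{\un{m}}(T) = L_{\un{m}}\pi(a_{\un{m}})$, $a_{\un{m}} \in \A$; and the analogous one-variable criterion (Proposition \ref{P:Fc al}, applied in the $d=1$ form) characterizes the iterated semicrossed product by its Fourier coefficients in the $\bo{d}$-direction, whose ``coefficients'' are required to lie in $\scp{\A}{\al'}{\bZ_+^{d-1}}$, which in turn is pinned down by the remaining $d-1$ Fourier directions. Since the multi-index Fourier transform $G_{\un{m}}$ factors as the composition of the partial transforms along each coordinate (as already noted in the excerpt after Proposition \ref{P:lower triangular com}), the two sets of conditions are literally the same after transporting through the unitary. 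Thus the unitary implements an isometric w*-homeomorphic algebra isomorphism of the two semicrossed products.

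The main obstacle I anticipate is the well-definedness and the completely bounded/w*-continuity verification for $\wh{\al}_{\bo{d}}$ as an endomorphism of the inner semicrossed product $\scp{\A}{\al'}{\bZ_+^{d-1}}$: one must show that $\al_{\bo{d}}$, which a priori only acts on $\A$, lifts coherently to the whole w*-closed span $\ol{\spn}^{\textup{w*}}\{L_{\un{m}'}\pi'(a)\}$ and respects the covariance relations of that algebra — equivalently, that $(I \otimes^{(d-1)}\id)$-type ampliation of $\al_{\bo{d}}$ restricts to the subalgebra. The cleanest route is to realize $\wh\al_{\bo{d}}$ spatially: since in the transported picture it is just $T \mapsto (U_{\bo{d}}\text{-type intertwiner}) T (\cdots)$ inherited from whatever implements $\al_{\bo{d}}$ on $\H$ ampliated by the identity on $\ell^2(\bZ_+^{d-1})$, complete boundedness, w*-continuity, and the uniform power bound all descend automatically, and invariance of the subalgebra is a direct computation on the generators $L_{\un{m}'}\pi'(a)$ using the covariance relation $\pi'(a)L_{\bo{i}}' = L_{\bo{i}}'\pi'(\al_{\bo{i}}(a))$ together with commutativity $\al_{\bo{d}}\al_{\bo{i}} = \al_{\bo{i}}\al_{\bo{d}}$. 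Everything else is bookkeeping with the tensor-factor identification.
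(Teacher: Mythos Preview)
Your overall strategy is the paper's: split off the outermost $\bo{d}$-direction via the canonical unitary $\H\otimes\ell^2(\bZ_+^d)\cong (\H\otimes\ell^2(\bZ_+^{d-1}))\otimes\ell^2(\bZ_+)$, check that the generators match, and iterate. The paper does exactly this for $d=2$ and declares the rest routine.

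There is, however, one genuine gap in your resolution of the obstacle you correctly identified. Your ``cleanest route'' to showing that $\wh\al_{\bo{d}}$ is a well-defined w*-continuous completely bounded endomorphism invokes a spatial implementer for $\al_{\bo{d}}$ on $\H$ (``whatever implements $\al_{\bo{d}}$ on $\H$ ampliated by the identity''). But Proposition~\ref{P:disintegrate} is stated for an \emph{arbitrary} unital w*-dynamical system: the $\al_{\bo{i}}$ are only assumed to be w*-continuous and completely bounded, not spatially implemented. So nothing guarantees such an intertwiner exists, and this route is unavailable in the stated generality. The paper handles this purely abstractly: it observes that $\scp{\A}{\al'}{\bZ_+^{d-1}}$ sits inside $\A\,\ol\otimes\,\B(\ell^2(\bZ_+^{d-1}))$, and then builds $\al_{\bo{d}}\otimes\id$ on that ambient algebra as the w*-limit of the finite ampliations $\al_{\bo{d}}\otimes\id_n$ applied to corner compressions; complete boundedness and w*-continuity of $\al_{\bo{d}}$ are exactly what make this limit exist and be bounded. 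Restricting to the subalgebra then gives $\wh\al_{\bo{d}}$ with all required properties, and commutativity of the $\al_{\bo{i}}$ ensures it preserves the subalgebra. Replace your spatial argument with this ampliation-via-corners construction and the proof is complete.
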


\begin{proof}
We show how this decomposition works when $d=2$; the general case follows by iterating.
Fix $\al_{\bo{1}}$ and $\al_{\bo{2}}$ commuting endomorphisms of $\A$.
Then $\scp{\A}{\al_{\bo{1}}}{\bZ_+}$ acts on $\H \otimes \ell^2$ by 
\[
\pi(a) \xi \otimes e_n = \al_{(n,0)}(a)\xi \otimes e_n
\qand
L_1 \xi \otimes e_n = \xi \otimes e_{n+1}.
\]
Now we define the w*-dynamical system $(\scp{\A}{\al_{\bo{1}}}{\bZ_+}, \wh{\al}_{\bo{2}}, \bZ_+)$ by setting
\[
\wh{\al}_{\bo{2}}(\pi(a)) = \pi\al_{\bo{2}}(a) \qand \wh{\al}_{\bo{2}}(L_1) = L_1.
\]
To see that $\wh{\al}_{\bo{2}}$ defines a w*-continuous completely bounded endomorphism on $\scp{\A}{\al_{\bo{1}}}{\bZ_+}$ first note that $\scp{\A}{\al_{\bo{1}}}{\bZ_+}$ is a w*-closed subalgebra of $\A \, \ol{\otimes} \, \B(\ell^2)$.
Since $\al_{\bo{2}}$ is w*-continuous and completely bounded, for $X \in \A \, \ol{\otimes} \, \B(\ell^2)$ we can obtain $\al_{\bo{2}} \otimes \id(X)$ as the limit of 
\[
\al_{\bo{2}} \otimes \id_n(P_{\H \otimes \ell^2(n)} X |_{\H \otimes \ell^2(n)}) \in \A \otimes \M_n(\bC).
\]
Hence $\al_{\bo{2}} \otimes \id$ defines a w*-completely bounded endomorphism of $\A \, \ol{\otimes} \, \B(\ell^2)$ and $\wh{\al}_{\bo{2}}$ is its restriction to the  $\scp{\A}{\al_{\bo{1}}}{\bZ_+}$.
The unitary $U$ given by $U \xi \otimes e_{(n,m)} = \xi \otimes e_n \otimes e_m$ then defines the required unitary equivalence between $\scp{\A}{\al}{\bZ_+^2}$ and $\scp{(\scp{\A}{\al_{\bo{1}}}{\bZ_+})}{\wh{\al}_{\bo{2}}}{\bZ_+}$.
\end{proof}

\section{The bicommutant property}\label{S:commutant}

\subsection{Semicrossed products over $\bF_+^d$}\label{Ss:scp free bicom}

The duality between the left and the right w*-semicrossed products is reflected in the bicommutant property.

\begin{theorem}\label{T:com cun}
Let $(\A, \{\al_i\}_{i \in [d]})$ be a w*-dynamical system of a uniformly bounded spatial action implemented by $\{u_i\}_{i \in [d]}$.
Then we have that
\[
(\scp{\A}{\al}{\L_d})'  = \scp{\A'}{u}{\R_d} \qand (\scp{\A'}{u}{\L_d})' = \scp{\A''}{\al}{\R_d}
\]
and that
\[
(\scp{\A}{\al}{\R_d})'  = \scp{\A'}{u}{\L_d} \qand (\scp{\A'}{u}{\R_d})' = \scp{\A''}{\al}{\L_d}.
\]
\end{theorem}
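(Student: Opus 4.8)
The plan is to prove all four identities by a single scheme: the ``$\supseteq$'' inclusions by a direct check on generators using the covariance relations and equation~(\ref{eq:multiply}), and the reverse inclusions by a Fourier-coefficient analysis based on Fej\'er's Lemma. I would write out the details for the first identity $(\scp{\A}{\al}{\L_d})'=\scp{\A'}{u}{\R_d}$ and obtain $(\scp{\A}{\al}{\R_d})'=\scp{\A'}{u}{\L_d}$ by the symmetric argument (interchanging left and right creation operators, $\ol{\pi}$ with $\pi$, and $\leq_l$ with $\leq_r$); the two identities involving $\scp{\A'}{u}{\cdot}$ need a slightly different input and are treated afterwards. For ``$\supseteq$'': since $\scp{\A}{\al}{\L_d}$ is the w*-closed algebra generated by $\ol{\pi}(\A)$ and the $L_i$, it suffices to show each $W_{i,j}\rho(y)=u_{i,j}y\otimes\Br_i$ with $y\in\A'$ commutes with every $L_k$ and every $\ol{\pi}(a)$; commuting with $L_k=I_\H\otimes\Bl_k$ is immediate as left and right creation operators commute, and commuting with $\ol{\pi}(a)$ follows from $\al_i(x)u_{i,j}=u_{i,j}x$, the definition of $\ol{\pi}$, and $y\in\A'$. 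The analogous one-line checks give the ``$\supseteq$'' halves of all four identities, using for the two $\A''$-identities that each $\al_i$ extends to an endomorphism of $\A''$ (the Remark following Proposition~\ref{P:com al}).

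For the reverse inclusion of the first identity, take $T\in(\scp{\A}{\al}{\L_d})'$. Since $\ad_{U_s}$ preserves $\scp{\A}{\al}{\L_d}$ it preserves its commutant, so every Fourier coefficient $G_m(T)$ lies in $(\scp{\A}{\al}{\L_d})'$ and, by Fej\'er's Lemma, it is enough to prove $G_m(T)\in\scp{\A'}{u}{\R_d}$ for each $m$. Fix $m$. Commuting $G_m(T)$ with the $L_i$ and peeling off initial letters one at a time shows $(G_m(T))_{v,w}=0$ unless $w\leq_r v$, and that $(G_m(T))_{w\ol{\mu},w}$ is independent of $w$; in particular $G_m(T)=0$ when $m<0$, and for $m\geq 0$ Proposition~\ref{P:lower triangular} yields $G_m(T)=\sum_{|\mu|=m}R_\mu\rho(x_\mu)$ with $x_\mu:=(G_m(T))_{\ol{\mu},\mt}$.

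Now evaluate the identity $G_m(T)\ol{\pi}(a)=\ol{\pi}(a)G_m(T)$ on vectors $\xi\otimes e_\mt$: this gives $x_\mu a=\al_\mu(a)x_\mu$ for all $a\in\A$ and all $\mu$ with $|\mu|=m$. Writing $\al_\mu(a)=\sum_{\bo w}u_{\bo w}a\,v_{\bo w}$ as a sum over the words $\bo w=(\mu_m,j_m)\cdots(\mu_1,j_1)$ in $\bF_+^N$ with underlying word $\mu$ (as in the proof of Proposition~\ref{P:uba sys}) and left-multiplying by $v_{\bo w}$, this says precisely that $y_{\bo w}:=v_{\bo w}x_\mu$ lies in $\A'$ and $x_\mu=\sum_{\bo w}u_{\bo w}y_{\bo w}$. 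Hence $R_\mu\rho(x_\mu)=\sum_{\bo w}W_{\bo w}\rho(y_{\bo w})$, whose partial sums are bounded by $\|x_\mu\|$ via the estimate $\|\sum_{\bo w\in F}u_{\bo w}v_{\bo w}\|\leq 1$ applied to the invertible pair $(\wh{u}_\mu,\wh{v}_\mu)$, so $R_\mu\rho(x_\mu)\in\scp{\A'}{u}{\R_d}$; summing over $|\mu|=m$ (the partial sums stay bounded because the $R_\mu R_\mu^*$ are orthogonal projections) gives $G_m(T)\in\scp{\A'}{u}{\R_d}$, and Fej\'er's Lemma finishes the inclusion.

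For $(\scp{\A'}{u}{\L_d})'=\scp{\A''}{\al}{\R_d}$ and $(\scp{\A'}{u}{\R_d})'=\scp{\A''}{\al}{\L_d}$, take $T$ in the commutant of $\scp{\A'}{u}{\L_d}$, which by Proposition~\ref{P:com is alg} equals $\{V_{i,j}\}'\cap\rho(\A')'$. Commuting with $\rho(\A')$ forces every entry $T_{v,w}$ into $\A''$; commuting with the generators $V_{i,j}=u_{i,j}\otimes\Bl_i$ and peeling off letters (now using that $u_i$ is an invertible row operator) forces $T_{v,w}=0$ unless $w\leq_r v$, together with the recursion $T_{iv',iw}=\al_i(T_{v',w})$. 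Passing to Fourier coefficients as before, $G_m(T)=0$ for $m<0$ while $G_m(T)=\sum_{|\eta|=m}R_\eta\pi(z_\eta)$ for $m\geq 0$, where $z_\eta:=(G_m(T))_{\ol{\eta},\mt}\in\A''$; since $R_\eta\pi(z_\eta)=R_\eta R_\eta^*\,G_m(T)$ the sum has bounded partial sums and lies in $\scp{\A''}{\al}{\R_d}$, and Fej\'er's Lemma gives $T\in\scp{\A''}{\al}{\R_d}$. The main obstacle throughout is decoding the structure of a homogeneous coefficient $G_m(T)$ from the commutation relations --- establishing that its ``shifted-diagonal'' entries $T_{w\ol{\mu},w}$ are independent of $w$, and that the resulting operator coefficients factor through the inverses of the row operators $\wh{u}_\mu$ --- while keeping the (possibly infinite) sums bounded so that they remain inside the given w*-closed spaces.
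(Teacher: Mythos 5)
Your proposal is correct and follows essentially the same route as the paper: the inclusions $\supseteq$ by checking generators against the covariance relations, and the reverse inclusions by Fej\'er's Lemma together with a Fourier-coefficient analysis that establishes right (resp.\ left) lower triangularity, identifies the shifted-diagonal entries via the intertwining $x_\mu a=\al_\mu(a)x_\mu$, and factors the coefficients through the inverses $v_{\bo w}$ to land in $\A'$ (resp.\ uses $\rho(\A')$ and the $V_{i,j}$ to land in $\A''$), with the same boundedness controls on the partial sums. The only differences are cosmetic: you organize the $\A''$ case around the single recursion $T_{i v', i w}=\al_i(T_{v',w})$ rather than the paper's separate $m=0$ and $m>0$ steps, and your norm bound via $\lVert\sum_{\bo w\in F}u_{\bo w}v_{\bo w}\rVert\le 1$ is a slightly cleaner version of the paper's $K^2$ estimate.
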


\begin{proof}
Direct computations show that $\scp{\A'}{u}{\R_d}$ is in the commutant of $\scp{\A}{\al}{\L_d}$.
For the reverse inclusion let $T$ be in the commutant of $\scp{\A}{\al}{\L_d}$.
As the Fourier transform respects the commutant it suffices to show that $G_m(T)$ is in $\scp{\A'}{u}{\R_d}$ for all $m \in \bZ_+$, and it is zero for all $m < 0$.

For $\mu, \nu \in \bF_+^d$ and by using the commutant property we get that
\begin{align*}
\sca{T_{\mu, \nu} \xi, \eta}
& = 
\sca{T L_\nu \xi \otimes e_{\mt}, \eta \otimes e_\mu} \\
& =
\sca{L_\nu T \xi \otimes e_{\mt}, \eta \otimes e_\mu} 
=
\sca{T \xi \otimes e_{\mt}, \eta \otimes \Bl_\nu^*e_\mu}.
\end{align*}
However we have that $(\Bl_\nu)^* e_\mu = 0$ whenever $\nu \not\leq_r \mu$.
Therefore $T$ is right lower triangular and thus
\[
G_m(T)
=
\begin{cases}
\sum_{|\mu| = m} R_{\mu} T_{(\mu)} & \text{ if } m \geq 0, \\
0 & \text{ if } m < 0,
\end{cases}
\]
for $T_{(\mu)} = \sum_{w \in \bF_+^d} T_{w \ol{\mu}, w} \otimes p_w = R_\mu^* G_m(T)$.
Moreover we have that
\begin{align*}
\sum_{|\mu| = m} T_{w \ol{\mu}, w} \xi \otimes e_{w \ol{\mu}}
& =
G_m(T) L_w \xi \otimes e_\mt \\
& =
L_w G_m(T) \xi \otimes e_\mt
=
\sum_{|\mu| = m} T_{\ol{\mu}, \mt} \xi \otimes e_{w \ol{\mu}}
\end{align*}
which shows that $T_{(\mu)} = \rho(T_{\ol{\mu}, \mt})$ for all $\mu$ of length $m$.
Furthermore we have that
\begin{align*}
\sum_{|\mu| = m} T_{\ol{\mu}, \mt} a \xi \otimes e_{\ol{\mu}}
& =
G_m(T) \ol{\pi}(a) \xi \otimes e_\mt \\
& =
\ol{\pi}(a) G_m(T) \xi \otimes e_\mt
=
\sum_{|\mu| = m} \al_{\mu}(a)T_{\ol{\mu}, \mt} \xi \otimes e_{\ol{\mu}} 
\end{align*}
and therefore $T_{\ol{\mu}, \mt} a = \al_{\mu}(a) T_{\ol{\mu}, \mt}$ for all $a \in \A$.
Let $v_i$ be the inverse of $u_i$.
For $\mu = \mu_m \dots \mu_1$ and $j_i \in [n_{\mu_i}]$ we set
\[
y_{\mu, j_1, \dots, j_m} : = v_{\mu_1, j_1} \cdots v_{\mu_m, j_m} T_{\ol{\mu}, \mt}.
\]
Then $y_{\mu, j_1, \dots, j_m}$ is in $\A'$ since
\begin{align*}
a \cdot v_{\mu_1, j_1} \cdots v_{\mu_m, j_m} T_{\ol{\mu}, \mt}
& =
v_{\mu_1, j_1} \cdots v_{\mu_m, j_m} \al_{\mu_m} \cdots \al_{\mu_1}(a) T_{\ol{\mu}, \mt} \\
& =
v_{\mu_1, j_1} \cdots v_{\mu_m, j_m} \al_{\mu}(a) T_{\ol{\mu}, \mt} \\
& =
v_{\mu_1, j_1} \cdots v_{\mu_m, j_m} T_{\ol{\mu}, \mt} \cdot a
\end{align*}
for all $a \in \A$.
Now we can write
\begin{align*}
R_\mu T_{(\mu)}
& =
\sum_{j_m \in [n_{\mu_m}]} \cdots \sum_{j_1 \in [n_{\mu_1}]} R_{\mu} \rho(u_{\mu_m, j_m} \cdots u_{\mu_1, j_1}) \rho(y_{\mu, j_1, \dots, j_m}) \\
& =
\sum_{j_m \in [n_{\mu_m}]} \cdots \sum_{j_1 \in [n_{\mu_1}]} W_{\mu_m, j_m} \cdots W_{\mu_1, j_1} \rho(y_{\mu, j_1, \dots, j_m}).
\end{align*}
If $F$ is a finite set of $[n_{\mu_m}]$ then
\begin{align*}
\| \sum_{j_1 \in F} W_{\mu_m, j_m} \cdots W_{\mu_1, j_1} \rho(y_{\mu, j_1, \dots, j_m}) \|
& = \\
& \hspace{-5cm} =
\| \sum_{j_1 \in F} u_{\mu_m, j_m} \cdots u_{\mu_1, j_1} v_{\mu_1, j_1} \cdots v_{\mu_m, j_m} T_{\ol{\mu}, \mt} \| \\
& \hspace{-5cm} \leq
\nor{u_{\mu_m, j_m} \cdots u_{\mu_2, j_2}} \| \sum_{j_1 \in F} u_{\mu_1, j_1} v_{\mu_1, j_1} \| \nor{v_{\mu_2, j_2} \cdots v_{\mu_m, j_m}} \nor{T_{\ol{\mu}, \mt}} \\
& \hspace{-5cm} \leq
K^2 \nor{T_{\ol{\mu}, \mt}}
\end{align*}
where $K$ is the uniform bound for $\{\wh{u}_{\mu}\}_{\mu}$ and $\{\wh{v}_{\mu}\}_{\mu}$.
Inductively we have that the sums in the above form of $R_\mu T_{(\mu)}$ converge in the w*-topology and therefore each $R_{\mu} T_{(\mu)}$ is in $\scp{\A'}{u}{\R_d}$.
As in Proposition \ref{P:lower triangular} an application of Fej\'{e}r's Lemma induces that $T$ is in $\scp{\A'}{u}{\R_d}$.

Next we show that $(\scp{\A'}{u}{\L_d})' = \scp{\A''}{\al}{\R_d}$.
Again it is immediate that $\scp{\A''}{\al}{\R_d}$ is in the commutant of $\scp{\A'}{u}{\L_d}$.
For the reverse inclusion let $T$ be in the commutant.
Then $T$ commutes with all $L_{i} \rho(u_{i, j_i})$.
First let $\nu \not\leq_r \mu$ with $\nu = \nu_k \dots \nu_1$; then
\begin{align*}
\sca{T_{\mu,\nu} u_{\nu_k, j_{k}} \dots u_{\nu_1, j_1} \xi, \eta}
& =
\sca{T \rho(u_{\nu_{k}, j_{k}} \dots u_{\nu_1, j_1}) \xi \otimes e_\nu, \eta \otimes e_\mu} \\
& =
\sca{T L_{\nu} \rho(u_{\nu_k, j_{k}} \dots u_{\nu_1, j_1}) \xi \otimes e_\mt, \eta \otimes e_\mu} \\
& =
\sca{L_{\nu} \rho(u_{\nu_k, j_{k}} \dots u_{\nu_1, j_1}) T \xi \otimes e_\mt, \eta \otimes e_\mu} \\
& =
\sca{\rho(u_{\nu_k, j_{k}} \dots u_{\nu_1, j_1}) T \xi \otimes e_\mt, (L_{\nu} )^*\eta \otimes e_\mu}
=
0.
\end{align*}
Therefore by summing over the $j_i$ we obtain
\begin{align*}
T_{\mu, \nu}
& =
\sum_{j_k \in [n_{\nu_k}]} \cdots \sum_{j_1 \in [n_{\nu_1}]} T_{\mu, \nu} u_{\nu_k, j_{k}} \dots u_{\nu_1, j_1} v_{\nu_1, j_1} \dots v_{\nu_{k}, j_{k}}
=
0
\end{align*}
so that $T$ is right lower triangular.
We thus check the non-negative Fourier co-efficients.
For $m=0$ we have that $T_{(0)}$ commutes with $\rho(\A')$ and therefore every $T_{w,w}$ is in $\A''$.
Moreover for $w \in \bF_+^d$ with $w = w_k \dots w_1$ we have that
\begin{align*}
T_{w,w} u_{w_{k}, j_{k}} \cdots u_{w_1, j_1} \xi \otimes e_w
& =
G_0(T) L_w \rho(u_{w_{k}, j_{k}}) \cdots \rho(u_{w_1, j_1}) \xi \otimes e_\mt \\
& =
L_w \rho(u_{w_{k}, j_{k}}) \cdots \rho(u_{w_1, j_1}) G_0(T) \xi \otimes e_\mt \\
& =
u_{w_{k}, j_{k}} \cdots u_{w_1, j_1} T_{\mt, \mt} \xi \otimes e_w.
\end{align*}
Consequently we obtain
\begin{align*}
\al_{w}(T_{\mt, \mt})
& =
\al_{w_k} \cdots \al_{w_1}(T_{\mt, \mt}) \\
& =
\sum_{j_k \in [n_{w_k}]} \cdots \sum_{j_1 \in [n_{w_1}]} u_{w_{k}, j_{k}} \cdots u_{w_1, j_1} T_{\mt, \mt} v_{w_1, j_1} \cdots v_{w_{k}, j_{k}} \\
& =
T_{w, w} \sum_{j_k \in [n_{w_k}]} \cdots \sum_{j_1 \in [n_{w_1}]} u_{w_{k}, j_{k}} \cdots u_{w_1, j_1} v_{w_1, j_1} \cdots v_{w_{k}, j_{k}}
=
T_{w, w}.
\end{align*}
Thus we have that $G_0(T) = \pi(T_{\mt, \mt})$.
Now let $m >0$ and use that $G_m(T)$ commutes with $L_{i} \rho(u_{i, j_i})$ to deduce that
\begin{align*}
T_{(\mu)} L_{i} \rho(u_{i, j_i})
& =
R_\mu^* G_m(T) L_{i} \rho(u_{i, j_i})
=
R_\mu^* L_{i} \rho(u_{i, j_i}) G_m(T).
\end{align*}
However for $\xi \otimes e_\nu \in \K$ we have that
\begin{align*}
(R_\mu)^* L_{i} \rho(u_{i, j_i}) G_m(T) \xi \otimes e_\nu
& =
u_{i, j_i} T_{\nu \ol{\mu}, \nu} \xi \otimes (\Br_\mu)^* e_{i \nu \mu}
=
L_{i} \rho(u_{i, j_i}) T_{(\mu)} \xi \otimes e_\nu
\end{align*}
which yields that $T{(\mu)}$ commutes with every $L_{i} \rho(u_{i, j_i})$.
Furthermore for $y \in \A'$ we get that
\begin{align*}
T_{(\mu)} \rho(y)
& =
(R_\mu)^* G_m(T) \rho(y)
=
(R_\mu)^* \rho(y) G_m(T) \\
& =
\rho(y) (R_\mu)^* G_m(T)
=
\rho(y) T_{(\mu)}.
\end{align*}
Therefore $T_{(\mu)}$ is a diagonal operator in $(\scp{\A'}{\al}{\L_d})'$ and thus $T_{(\mu)} = \pi(T_{\ol{\mu}, \mt})$ by what we have shown for the zero Fourier co-efficients.
This shows that $G_m(T)$ is in $\scp{\A''}{\al}{\R_d}$ for all $m \in \bZ_+$.

The other equalities follow in a similar way and are left to the reader.
\end{proof}

Recall that $\A$ is \emph{inverse closed} if $\A^{-1} \subseteq \A$.
It is well known that every commutant is automatically inverse closed.

\begin{corollary}\label{C:bicom cun}
Let $(\A, \{\al_i\}_{i \in [d]})$ be a w*-dynamical system of a uniformly bounded spatial action.
Then the following are equivalent
\begin{enumerate}
\item $\A$ has the bicommutant property;
\item $\scp{\A}{\al}{\L_d}$ has the bicommutant property;
\item $\scp{\A}{\al}{\R_d}$ has the bicommutant property;
\item $\A \ol{\otimes} \L_d$ has the bicommutant property;
\item $\A \ol{\otimes} \R_d$ has the bicommutant property.
\end{enumerate}
If any of the items above hold then all algebras are inverse closed.
\end{corollary}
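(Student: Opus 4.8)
The strategy is to use Theorem~\ref{T:com cun} to convert the bicommutant property of each semicrossed product into the equality $\A'' = \A$ for the ambient algebra. The key observation is that ``twisting twice returns to the original construction'': if we apply Theorem~\ref{T:com cun} to the system $(\A, \{\al_i\})$ we obtain
\[
(\scp{\A}{\al}{\L_d})' = \scp{\A'}{u}{\R_d},
\]
and applying it again (now to the spatial data $\{u_i\}$, whose induced endomorphisms act on $\A'$) we get
\[
(\scp{\A}{\al}{\L_d})'' = (\scp{\A'}{u}{\R_d})' = \scp{\A''}{\al}{\L_d}.
\]
Thus $\scp{\A}{\al}{\L_d}$ has the bicommutant property if and only if $\scp{\A''}{\al}{\L_d} = \scp{\A}{\al}{\L_d}$. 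The same argument with the roles of $\L_d$ and $\R_d$ interchanged handles $\scp{\A}{\al}{\R_d}$.

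\textbf{Reducing to $\A'' = \A$.}
It remains to see that $\scp{\A''}{\al}{\L_d} = \scp{\A}{\al}{\L_d}$ holds precisely when $\A'' = \A$. One direction is trivial: if $\A = \A''$ the two algebras are literally the same. For the converse, use the Fourier-coefficient description of Proposition~\ref{P:Fc al}: an element of $\scp{\A''}{\al}{\L_d}$ has $G_m(T) = \sum_{|\mu|=m} L_\mu\ol{\pi}(a_\mu)$ with $a_\mu\in\A''$, whereas membership in $\scp{\A}{\al}{\L_d}$ demands $a_\mu\in\A$. Evaluating at $\xi\otimes e_\mt$ and reading off the $e_\mu$-component recovers each $a_\mu$ (up to the already-invertible twists built from the $u_i$, which preserve $\A$ versus $\A''$ separately by the Remark following Proposition~\ref{P:com al}); hence the two semicrossed products coincide iff every element of $\A''$ already lies in $\A$. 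This gives the equivalence (i)$\iff$(ii), and symmetrically (i)$\iff$(iii).

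\textbf{The tensor-product cases.}
Items (iv) and (v) are the special case of Theorem~\ref{T:com cun} in which $d=1$ and $\al_1 = \id$ is implemented by the trivial row operator $u_1 = I_\H$ (so $n_1=1$): then $\scp{\A}{\id}{\L_1} = \A\ol{\otimes}\L_1$, but to get $\A\ol{\otimes}\L_d$ for general $d$ one instead takes the system on $\A$ given by $d$ copies of the identity, each implemented by $u_i = I_\H$, which is manifestly a uniformly bounded spatial action with capacity $d$; one checks directly from the definitions that $\scp{\A}{\id}{\L_d} = \A\ol{\otimes}\L_d$ and $\scp{\A'}{u}{\R_d} = \A'\ol{\otimes}\R_d$, and the general mechanism above yields $(\A\ol{\otimes}\L_d)'' = \A''\ol{\otimes}\L_d$. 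So (iv)$\iff$(i) and (v)$\iff$(i) by the same Fourier-coefficient argument. Finally, the last sentence is immediate: each of these algebras is (by the two-fold twisting) a commutant of something, and commutants are always inverse closed.

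\textbf{Main obstacle.}
The only genuine subtlety is the step $\scp{\A''}{\al}{\L_d} = \scp{\A}{\al}{\L_d} \Rightarrow \A'' = \A$: one must be careful that the extra invertible factors $u_{i,j_i}$ appearing in the covariance relations do not let an $\A''$-coefficient ``hide'' inside an $\A$-coefficient. This is ruled out because, by the Remark after Proposition~\ref{P:com al}, the $\al_i$ restrict to genuine endomorphisms of both $\A$ and $\A''$ and the twisting operators $v_{i,j_i}(\cdot)u_{i,k_i}$ map $\A'$ into $\A'$ but do not interact with the $\A$-versus-$\A''$ distinction in the diagonal part; reading the zeroth Fourier coefficient at $e_\mt$ isolates $T_{\mt,\mt}$ as an honest element of whichever of $\A,\A''$ indexes the algebra. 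Everything else is bookkeeping with Fej\'er's Lemma and Proposition~\ref{P:Fc al}.
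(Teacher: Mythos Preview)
Your proof is correct and follows essentially the same route as the paper: apply Theorem~\ref{T:com cun} twice to obtain $(\scp{\A}{\al}{\L_d})'' = \scp{\A''}{\al}{\L_d}$, and then read off the ambient algebra from the semicrossed product. The paper phrases the second step simply as ``applying the compression to the $(\mt,\mt)$-entry'', which is exactly your ``evaluate at $\xi\otimes e_\mt$'' argument; note that your ``main obstacle'' about hidden $u_{i,j_i}$-twists is in fact a non-issue, since at the empty word $\al_\mt = \id$ and so the $(\mt,\mt)$-entry of $\ol\pi(a)$ is just $a$ with no twist at all --- the $u_i$'s play no role in this compression.
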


\begin{proof}
We just comment that the equivalence between items (i) and (ii) follows by using $(\scp{\A}{\al}{\L_d})'' = \scp{\A''}{\al}{\L_d}$ from Theorem \ref{T:com cun} and applying the compression to the $(\mt,\mt)$-entry.
\end{proof}

\begin{corollary}\label{C:inv cl cun}
(i) Let $\{\al_i\}_{i \in [d]}$ be a uniformly bounded spatial action on $\B(\H)$.
Then the w*-semicrossed products $\scp{\B(\H)}{\al}{\L_d}$ and $\scp{\B(\H)}{\al}{\R_d}$ are inverse closed.

(ii) Let $(\A, \{\al_i\}_{i \in [d]})$ be an automorphic system over a maximal abelian selfadjoint algebra (m.a.s.a.) $\A$.
Then the w*-semicrossed products $\scp{\A}{\al}{\L_d}$ and $\scp{\A}{\al}{\R_d}$ are inverse closed.
\end{corollary}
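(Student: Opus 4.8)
The plan is to deduce both items from Corollary \ref{C:bicom cun}, equivalently from Theorem \ref{T:com cun} together with the remark recalled just above that every commutant is inverse closed. The key observation is that chaining the four equalities of Theorem \ref{T:com cun} gives $(\scp{\A}{\al}{\L_d})'' = (\scp{\A'}{u}{\R_d})' = \scp{\A''}{\al}{\L_d}$ and symmetrically $(\scp{\A}{\al}{\R_d})'' = \scp{\A''}{\al}{\R_d}$; hence, as soon as $\A'' = \A$, each of the two w*-semicrossed products coincides with its own bicommutant, so it is a commutant and therefore inverse closed. Thus in each case it only remains to verify that the action is a uniformly bounded spatial action (so that Theorem \ref{T:com cun} applies) and that $\A'' = \A$.

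For (i): the action $\{\al_i\}_{i \in [d]}$ on $\B(\H)$ is uniformly bounded spatial by hypothesis, and $\B(\H)$ has the bicommutant property because $\B(\H)' = \bC I_\H$ forces $\B(\H)'' = \B(\H)$. Applying Corollary \ref{C:bicom cun} with $\A = \B(\H)$ gives that $\scp{\B(\H)}{\al}{\L_d}$ and $\scp{\B(\H)}{\al}{\R_d}$ are inverse closed.

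For (ii): a maximal abelian selfadjoint algebra satisfies $\A = \A'$, hence $\A'' = (\A')' = \A$, so $\A$ has the bicommutant property. What needs an argument is that an automorphic system $\{\al_i\}_{i \in [d]}$ over a masa is a uniformly bounded spatial action in the sense of Definition \ref{D:uba defn}. Here I would invoke the standard structure theory of masas: a masa on a Hilbert space is multiplicity-free, hence unitarily equivalent to $L^\infty(X,m)$ acting on $L^2(X,m)$, i.e. it acts in standard form, so every automorphism of it is spatially implemented by a unitary (see e.g. \cite{KR86}). Writing $\al_i = \ad_{u_i}$ with $u_i$ unitary, each $u_i$ is an invertible row operator with $n_i = 1$ and inverse $u_i^*$, and the uniform bound of Definition \ref{D:uba defn} holds automatically with constant $1$ because every $\wh{u}_\mu$ and every $\wh{v}_\mu$ is a finite product of unitaries. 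By Proposition \ref{P:uba sys} we obtain a w*-dynamical system of a uniformly bounded spatial action, so Corollary \ref{C:bicom cun} applies and yields that $\scp{\A}{\al}{\L_d}$ and $\scp{\A}{\al}{\R_d}$ are inverse closed.

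The only non-formal ingredient is thus the spatial implementation of automorphisms of a masa used in (ii); the remaining steps are bookkeeping --- identifying $\B(\H)$ and a masa as their own bicommutants and quoting Theorem \ref{T:com cun}/Corollary \ref{C:bicom cun}. I expect no serious obstacle beyond carefully citing the masa structure theory; if one prefers to avoid separability assumptions, one can instead use that a masa acting on $L^2$ of its localizable measure has a cyclic and separating vector, together with uniqueness of the standard form.
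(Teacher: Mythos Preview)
Your proposal is correct and follows essentially the same route as the paper: in both cases one checks $\A=\A''$ (trivially for $\B(\H)$, and via $\A=\A'$ for a masa) and then invokes Theorem~\ref{T:com cun} to realise the w*-semicrossed product as a commutant, hence inverse closed. The paper phrases this as $\scp{\A}{\al}{\L_d} = (\scp{\B}{u}{\R_d})'$ with $\B=\A'$, which is exactly the chain $(\scp{\A'}{u}{\R_d})' = \scp{\A''}{\al}{\L_d} = \scp{\A}{\al}{\L_d}$ you spell out; your detour through Corollary~\ref{C:bicom cun} is the same argument. The only point where you add something is the justification that an automorphic system over a masa is uniformly bounded spatial (unitary implementation via the standard form), which the paper leaves implicit.
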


\begin{proof}
Notice that in both cases $\A = \B'$ for a suitable $\B$ and that $\scp{\B}{u}{\L_d}$ and $\scp{\B}{u}{\R_d}$ are well defined.
The proof then follows by writing $\scp{\A}{\al}{\L_d} = (\scp{\B}{u}{\R_d})'$ and the symmetrical $\scp{\A}{\al}{\R_d} = (\scp{\B}{u}{\L_d})'$.
\end{proof}

\subsection{Semicrossed products over $\bZ_+^d$}\label{Ss:scp com bicom}

Recall the decomposition in Proposition \ref{P:disintegrate}.
By applying Theorem \ref{T:com cun} recursively we obtain the following theorem.

\begin{theorem}\label{T:com com}
Let $(\A, \al, \bZ_+^d)$ be a unital w*-dynamical system.
Suppose that each $\al_{\Bi}$ is implemented by a uniformly bounded row operator $u_{\Bi}$.
Then
\[
(\scp{\A}{\al}{\bZ_+^d})'
\simeq
\left( \cdots \left( ( \scp{\A'}{u_{\bo{1}}}{\bZ_+} ) \ol{\times}_{\wh{u}_{\bo{2}}} \bZ_+ \right) \cdots \right) \ol{\times}_{\wh{u}_{\bo{d}}} \bZ_+
\]
where $\wh{u}_{\bo{i}} = u_{\bo{i}} \otimes^{(i-1)} I_{\ell^2}$ for $i = 2, \dots, d$.
\end{theorem}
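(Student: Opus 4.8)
The plan is to argue by induction on $d$, driven by the disintegration of Proposition~\ref{P:disintegrate} and the one-variable instance of Theorem~\ref{T:com cun}, exactly as the sentence preceding the statement indicates. For $d=1$ there is nothing to do beyond unravelling notation: since $\bF_+^1=\bZ_+$ one has $\L_1=\R_1$, so $\scp{\A}{\al}{\bZ_+}$ is literally $\scp{\A}{\al}{\L_1}$ and $\scp{\A'}{u_{\bo 1}}{\bZ_+}$ is $\scp{\A'}{u_{\bo 1}}{\R_1}$; hence Theorem~\ref{T:com cun} reads $(\scp{\A}{\al}{\bZ_+})'=\scp{\A'}{u_{\bo 1}}{\bZ_+}$, which is the $d=1$ case of the claim.

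For the inductive step, set $\B:=\scp{\A}{\al}{\bZ_+^{d-1}}$, the w*-semicrossed product of the sub-system carried by the first $d-1$ directions. Proposition~\ref{P:disintegrate} provides a unitary $U$ --- acting as the identity on $\H$ and merely reshuffling $\ell^2(\bZ_+^d)$ into $\ell^2(\bZ_+^{d-1})\otimes\ell^2(\bZ_+)$ --- with
\[
U\,\scp{\A}{\al}{\bZ_+^d}\,U^{*}=\B\,\ol{\times}_{\wh\al_{\bo d}}\,\bZ_+ ,
\]
where $\wh\al_{\bo d}$ is the restriction to $\B$ of $\al_{\bo d}\otimes^{(d-1)}\id$. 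From $\al_{\bo d}(a)=\sum_{j}u_{\bo d,j}\,a\,v_{\bo d,j}$ one reads off that $\wh\al_{\bo d}$ is implemented by the row operator $\wh u_{\bo d}=u_{\bo d}\otimes^{(d-1)}I_{\ell^2}$, whose inverse is $v_{\bo d}\otimes^{(d-1)}I_{\ell^2}$; tensoring with an identity leaves the relevant norms unchanged, so $\wh u_{\bo d}$ is uniformly bounded whenever $u_{\bo d}$ is and $(\B,\wh\al_{\bo d})$ is a w*-dynamical system of a uniformly bounded spatial action. Applying the $d=1$ case of Theorem~\ref{T:com cun} to $(\B,\wh\al_{\bo d})$ gives $(\B\,\ol{\times}_{\wh\al_{\bo d}}\,\bZ_+)'=\scp{\B'}{\wh u_{\bo d}}{\bZ_+}$, and as unitary conjugation carries commutants to commutants, this identifies $(\scp{\A}{\al}{\bZ_+^d})'$ with $\scp{\B'}{\wh u_{\bo d}}{\bZ_+}$ up to the unitary $U$. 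The inductive hypothesis, applied to the $(d-1)$-direction sub-system, then supplies a unitary $V$ on $\H\otimes\ell^2(\bZ_+^{d-1})$ --- again reshuffling the $\ell^2$-factor while fixing $\H$ --- with
\[
V\,\B'\,V^{*}=\Bigl(\cdots\bigl((\scp{\A'}{u_{\bo 1}}{\bZ_+})\,\ol{\times}_{\wh u_{\bo 2}}\,\bZ_+\bigr)\cdots\Bigr)\,\ol{\times}_{\wh u_{\bo{d-1}}}\,\bZ_+ .
\]
Conjugating $\scp{\B'}{\wh u_{\bo d}}{\bZ_+}$ by $V\otimes I_{\ell^2}$ transports $\B'$ to the algebra just displayed and, because $V$ is the identity on $\H$, leaves the implementing family $\wh u_{\bo d}=u_{\bo d}\otimes^{(d-1)}I$ untouched; thus the conjugated algebra is precisely the $d$-fold iterate, which is the assertion for $d$.

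The only genuinely delicate point is this bookkeeping: one must verify that $\wh u_{\bo d}$ really implements $\wh\al_{\bo d}$ on $\B$ with uniform bounds inherited from $u_{\bo d}$, and that the reshuffling unitaries of Proposition~\ref{P:disintegrate} and of the inductive hypothesis commute with the operators $u_{\bo d,j}\otimes I$ --- which is immediate once one observes those unitaries act trivially on the $\H$-leg, where the $u_{\bo d,j}$ live. Granting this, the recursion closes with no further computation: each application of Theorem~\ref{T:com cun} accounts for exactly one more outermost twisted creation layer $\ol{\times}_{\wh u_{\bo i}}\,\bZ_+$, and at the bottom level the diagonal representation of $\A$ is replaced by $\rho$ on $\A'$, in accordance with the one-variable statement.
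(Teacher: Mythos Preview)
Your proposal is correct and follows precisely the approach the paper indicates: the paper's entire proof is the sentence ``Recall the decomposition in Proposition~\ref{P:disintegrate}. By applying Theorem~\ref{T:com cun} recursively we obtain the following theorem,'' and your argument is a careful unpacking of exactly that recursion. Your attention to the bookkeeping --- that the reshuffling unitaries act as the identity on the $\H$-leg and hence commute with $u_{\bo d,j}\otimes I$ --- is the one detail the paper leaves implicit, and you handle it correctly.
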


Consequently we obtain the following corollaries.
Their proofs follow as in the free semigroup case and are omitted. 

\begin{corollary}\label{C:bicom com}
Let $(\A, \al, \bZ_+^d)$ be a unital w*-dynamical system.
Suppose that each $\al_{\Bi}$ is implemented by a uniformly bounded row operator $u_{\Bi}$.
Then the following are equivalent
\begin{enumerate}
\item $\A$ has the bicommutant property;
\item $\scp{\A}{\al}{\bZ_+^d}$ has the bicommutant property;
\item $\A \, \ol{\otimes} \, \bH^{\infty}(\bZ_+^d)$ has the bicommutant property.
\end{enumerate}
If any of the items above hold then all algebras are inverse closed.
\end{corollary}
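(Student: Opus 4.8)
The plan is to imitate the proof of Corollary \ref{C:bicom cun}, with Theorem \ref{T:com cun} replaced by Theorem \ref{T:com com} and its ``twist twice'' companion. The one identity that does all the work is
\[
(\scp{\A}{\al}{\bZ_+^d})'' \simeq \scp{\A''}{\al}{\bZ_+^d};
\]
granting this, the three equivalences and the inverse--closure statement are formal.

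To obtain the displayed identity I would proceed along the decomposition of Proposition \ref{P:disintegrate}, which writes $\scp{\A}{\al}{\bZ_+^d}$ (up to unitary equivalence) as a tower of one--variable w*-semicrossed products, the $i$-th step being twisted by the inflation $\wh{\al}_{\bo{i}}$ of $\al_{\bo{i}}$. Theorem \ref{T:com com} is precisely the outcome of applying the one--variable commutant formula of Theorem \ref{T:com cun} (for $d=1$, allowing twists) along this tower once; applying it a second time — using the dual identity $(\scp{\A'}{u}{\R_1})' = \scp{\A''}{\al}{\R_1}$ at the bottom level and noting that passing to commutants never moves the creation operators $L_{\Bi}$, so the outer twists are left untouched — replaces the coefficient algebra $\A'$ by $(\A')'=\A''$, which is the assertion above. (Equivalently one may invoke the twisted $\bZ_+^d$-commutant theorem directly, reading Theorem \ref{T:com com} once more on the iterated twisted semicrossed product that it produces.)

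From here the equivalence (i)$\Leftrightarrow$(ii) is exactly as in the free case: if $\A=\A''$ then $\scp{\A}{\al}{\bZ_+^d}=\scp{\A''}{\al}{\bZ_+^d}=(\scp{\A}{\al}{\bZ_+^d})''$, while conversely the bicommutant property forces $\scp{\A}{\al}{\bZ_+^d}=\scp{\A''}{\al}{\bZ_+^d}$, and comparing the $\un{0}$-th Fourier coefficients via Proposition \ref{P:Fc al com} (i.e. compressing to the $\H\otimes e_{\un{0}}$ corner, where the coefficient is forced to lie in $\pi(\A)$) gives $\pi(\A'')=\pi(\A)$, hence $\A=\A''$. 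For (i)$\Leftrightarrow$(iii) one specialises to the trivial action $\al_{\Bi}=\id$, which is implemented by the uniformly bounded row operators $u_{\Bi}=1$ and for which $\scp{\A}{\id}{\bZ_+^d}=\A\,\ol{\otimes}\,\bH^{\infty}(\bZ_+^d)$, so (iii) is just item (ii) in this case. Finally, if any of (i)--(iii) holds then $\A=\A''$, a commutant and hence inverse closed; moreover $\scp{\A}{\al}{\bZ_+^d}=(\scp{\A}{\al}{\bZ_+^d})''$ is then the commutant of the iterated twisted semicrossed product over $\A'$ supplied by Theorem \ref{T:com com}, so it is inverse closed, and likewise $\A\,\ol{\otimes}\,\bH^{\infty}(\bZ_+^d)$.

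The main obstacle is purely bookkeeping in the twisted setting: one must check that the one--variable twisted commutant computation of \cite{Kak09} used in Theorem \ref{T:com cun} remains valid after the inflations of $u_{\bo{i}}$ built into Proposition \ref{P:disintegrate} and Theorem \ref{T:com com}, and that iterating it twice composes back to the identity on the bottom coefficient algebra $\A$. This is routine but notation--heavy; once it is in place, the corner compression and the $\al=\id$ specialisation are immediate, which is why the details are omitted in the text.
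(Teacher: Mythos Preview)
Your proposal is correct and follows exactly the route the paper intends: the paper's proof of Corollary~\ref{C:bicom com} is omitted with the remark that it ``follows as in the free semigroup case'', i.e.\ one obtains $(\scp{\A}{\al}{\bZ_+^d})'' \simeq \scp{\A''}{\al}{\bZ_+^d}$ by applying the commutant computation of Theorem~\ref{T:com com} twice along the decomposition of Proposition~\ref{P:disintegrate}, and then compresses to the $(\un{0},\un{0})$-entry as in Corollary~\ref{C:bicom cun}. Your write-up supplies more detail than the paper does (including the $\al=\id$ specialisation for item (iii) and the inverse-closure observation), but the strategy is identical.
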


\begin{corollary}\label{C:inv cl com}
(i) Let $(\B(\H), \al, \bZ_+^d)$ be a w*-dynamical system such that each $\al_{\Bi}$ is implemented by a uniformly bounded row operator $u_{\Bi}$.
Then the w*-semicrossed product $\scp{\B(\H)}{\al}{\bZ_+^d}$ is inverse closed.

(ii) Let $(\A, \al, \bZ_+^d)$ be an automorphic system over a maximal abelian selfadjoint algebra (m.a.s.a) $\A$.
Then the w*-semicrossed product $\scp{\A}{\al}{\bZ_+^d}$ is inverse closed. 
\end{corollary}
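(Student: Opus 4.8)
The plan is to follow the route of Corollary~\ref{C:inv cl cun}: to exhibit $\scp{\A}{\al}{\bZ_+^d}$ as a commutant and then invoke the elementary fact that every commutant is automatically inverse closed. Indeed, if $\C \subseteq \B(\K)$ and $T \in \C'$ is invertible in $\B(\K)$, then $TS = ST$ for all $S \in \C$ forces $T^{-1}S = ST^{-1}$, so $T^{-1} \in \C'$. By Theorem~\ref{T:com com} the commutant of $\scp{\A}{\al}{\bZ_+^d}$ is unitarily equivalent to the iterated twisted w*-semicrossed product over $\A'$, and by iterating along the decomposition of Proposition~\ref{P:disintegrate} the identities of Theorem~\ref{T:com cun} that compute the commutant of a twisted semicrossed product (``twisting twice'') one identifies the commutant of the latter with $\scp{\A''}{\al}{\bZ_+^d}$. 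Hence $(\scp{\A}{\al}{\bZ_+^d})'' \simeq \scp{\A''}{\al}{\bZ_+^d}$, so whenever $\A = \A''$ the algebra $\scp{\A}{\al}{\bZ_+^d}$ is a commutant; this reduction is exactly the content of the final assertion of Corollary~\ref{C:bicom com}, which applies once $\A$ has the bicommutant property. In both parts of the statement $\A$ is a von Neumann algebra, hence $\A = \A''$.

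\textbf{Part (i).} Here $\A = \B(\H)$, so $\A'' = \B(\H)$, and by hypothesis each $\al_{\Bi}$ is implemented by a uniformly bounded row operator $u_{\Bi}$; thus $(\B(\H), \al, \bZ_+^d)$ satisfies the hypotheses of Theorem~\ref{T:com com} and of Corollary~\ref{C:bicom com}. Since $\B(\H)$ has the bicommutant property, Corollary~\ref{C:bicom com} yields at once that $\scp{\B(\H)}{\al}{\bZ_+^d}$ (and $\B(\H) \, \ol{\otimes} \, \bH^\infty(\bZ_+^d)$) is inverse closed.

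\textbf{Part (ii).} The only point that needs a word is that an automorphic system over a m.a.s.a.\ fits this framework. Since $\A$ is maximal abelian we may identify $(\A, \H)$ with $(L^\infty(X, m), L^2(X, m))$ for a measure space $(X, m)$; each $*$-automorphism $\al_{\Bi}$ is then induced by a point transformation of $X$ preserving the null sets and is implemented by a unitary $u_{\Bi} \in \B(\H)$, which we regard as an invertible row operator of multiplicity one. For every $\un{m} \in \bZ_+^d$ the operators $\wh{u}_{\un{m}}$ and $\wh{v}_{\un{m}}$ of Definition~\ref{D:ubo defn} are finite products of the $u_{\Bi}$ and of the $u_{\Bi}^{*}$, hence unitary, so $\nor{\wh{u}_{\un{m}}} = \nor{\wh{v}_{\un{m}}} = 1$; thus $\{u_{\Bi}\}_{i \in [d]}$ is uniformly bounded and $(\A, \al, \bZ_+^d)$ meets the hypotheses of Theorem~\ref{T:com com}. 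As the m.a.s.a.\ $\A$ satisfies $\A = \A''$, Corollary~\ref{C:bicom com} applies and $\scp{\A}{\al}{\bZ_+^d}$ is inverse closed.

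I expect no genuine obstacle: the commutant computations of Theorems~\ref{T:com cun} and~\ref{T:com com} (equivalently, Corollary~\ref{C:bicom com}) carry all the weight, and the only thing to check by hand is the classical fact that automorphisms of a m.a.s.a.\ are spatially implemented by unitaries --- which is in any case already needed in part~(ii) to make the ambient algebras well defined. Should one wish to avoid Corollary~\ref{C:bicom com}, the bare commutant realisation furnished by Theorem~\ref{T:com com} together with the elementary remark of the first paragraph already suffices.
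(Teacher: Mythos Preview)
Your proposal is correct and follows essentially the same route as the paper. The paper omits the proof, stating it follows as in Corollary~\ref{C:inv cl cun}; that argument writes $\scp{\A}{\al}{\bZ_+^d}$ directly as a commutant (using Theorem~\ref{T:com com} with $\A=\B'$ for a suitable $\B$), while you instead invoke the final clause of Corollary~\ref{C:bicom com}, which packages exactly the same commutant computation together with $\A=\A''$. The one detail you add that the paper leaves implicit---that automorphisms of a m.a.s.a.\ are spatially implemented by unitaries, so that the uniform-boundedness hypothesis of Theorem~\ref{T:com com} is met---is indeed needed and is handled correctly.
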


\section{Reflexivity}\label{S:reflexivity}

\subsection{Semicrossed products over $\bF_+^d$}\label{Ss:scp free}

Let $(\B(\H), \{\al_i\}_{i \in [d]})$ be a unital w*-dynamical system of a uniformly bounded spatial action such that each $\al_i$ is implemented by 
\[
u_i = [u_{i,j_i}]_{j_i \in [n_i]}.
\]
We aim to show that $\scp{\B(\H)}{\al}{\L_d}$ is similar to $\B(\H) \, \ol{\otimes} \, \L_N$ for $N = \sum_i n_i$.
Recall that we write
\[
\{(i ,j_i) \mid j_i \in [n_i], i \in [d]\}
\]
for the generators of $\bF_+^N$, i.e. we see $\bF_+^N$ as the free product $\ast_{i \in [d]} \bF_+^{n_i}$.
To this end we define the operator
\[
U \colon \H \otimes \ell^2(\bF_+^N) \to \H \otimes \ell^2(\bF_+^d)
\]
by
$U \xi \otimes e_\mt = \xi \otimes e_\mt$
and
\[
U \xi \otimes e_{(\mu_k, j_k) \dots (\mu_1, j_1)} = u_{\mu_1, j_1} \cdots u_{\mu_k, j_k} \xi \otimes e_{\mu_k \dots \mu_1}.
\]
For words of length $k$ we define the spaces
\[
\K_k := \ol{\spn}\{ \xi \otimes e_{(\mu_k, j_k) \dots (\mu_1, j_1)} \mid \xi \in \H, (\mu_i, j_i) \in ([d], [n_{\mu_i}]) \}.
\]
The ranges of $\K_k$ under $U$ are orthogonal and thus 
\begin{align*}
\nor{U|_{\K_k}} 
& =
\sup_{|\mu| = k} \| u_{\mu_1} \cdot (u_{\mu_2} \otimes I_{[n_{\mu_1}]}) \cdots (u_{\mu_k} \otimes I_{[n_{\mu_1} \cdots n_{\mu_{k-1}}]}) \|
=
\sup_{|\mu| = k} \nor{\wh{u}_\mu}
\end{align*}
which is bounded (by the uniform bound for $\{u_i\}_{i \in [d]}$).
As $U = \oplus_k U|_{\K_k}$ we derive that $U$ is bounded.
In particular the operator $U$ is invertible with 
\[
U^{-1} \colon \H \otimes \ell^2(\bF_+^d) \to \H \otimes \ell^2(\bF_+^N)
\]
given by
$U^{-1} \xi \otimes e_\mt = \xi \otimes e_\mt$
and
\[
U^{-1} \xi \otimes e_{\mu_k \dots \mu_1} = \sum_{j_1 \in [n_{\mu_1}]} \cdots \sum_{j_k \in [n_{\mu_k}]} v_{\mu_k, j_k} \cdots v_{\mu_1, j_1} \xi \otimes e_{(\mu_k, j_k) \dots (\mu_1, j_1)}
\]
where $v_i$ is the inverse of $u_i$.
Notice that if $K$ is the uniform bound for $\{\wh{u}_{\mu}\}_{\mu}$ and $\{\wh{v}_{\mu}\}_{\mu}$ then $\max\{ \nor{U}, \nor{U^{-1}}\} = K$.

\begin{theorem}\label{T:ue LN}
Let $(\B(\H), \{\al_i\}_{i \in [d]})$ be a w*-dynamical system of a uniformly bounded spatial action.
Suppose that every $\al_i$ is given by an invertible row operator $u_i = [u_{i, j_i}]_{j_i \in [n_i]}$ and set $N = \sum_{i \in [d]} n_i$.
Then the w*-semicrossed product $\scp{\B(\H)}{\al}{\L_d}$ is similar to $\B(\H) \, \ol{\otimes} \, \L_{N}$ . 
\end{theorem}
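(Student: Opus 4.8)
**

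The plan is to show that conjugation by the invertible operator $U$ defined just above carries $\B(\H) \, \ol{\otimes} \, \L_N$ onto $\scp{\B(\H)}{\al}{\L_d}$. Concretely, I would prove the two containments
\[
U (\B(\H) \, \ol{\otimes} \, \L_N) U^{-1} \subseteq \scp{\B(\H)}{\al}{\L_d}
\qand
U^{-1} (\scp{\B(\H)}{\al}{\L_d}) U \subseteq \B(\H) \, \ol{\otimes} \, \L_N,
\]
which together with invertibility of $U$ give the similarity. Since $U$ intertwines the gauge actions on the two Fock spaces (both unitary groups act by $e^{i|\text{word}|s}$, and $U$ preserves word length), it suffices by Fej\'{e}r's Lemma to check the containments at the level of homogeneous Fourier coefficients $G_m$.

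The first step is to identify $U L_{(i,j_i)} U^{-1}$ and $U \rho(x) U^{-1}$ for $x \in \B(\H)$. A direct computation on elementary tensors $\xi \otimes e_{\mu_k\dots\mu_1}$, using the definitions of $U, U^{-1}$ and the relations $v_{i,j}u_{i,k} = \de_{j,k}I_\H$ together with $\al_i(x)u_{i,j} = u_{i,j}x$ from equation \eqref{eq:multiply}, should yield something like
\[
U L_{(i,j_i)} U^{-1} = L_i \ol{\pi}(u_{i,j_i})
\qand
U (x \otimes I_{\ell^2(\bF_+^N)}) U^{-1} = \ol{\pi}(x),
\]
where I must be careful about the left-versus-right conventions — the creation operators $\Bl_\mu$ compose words on one side while the $u$'s stack in the reversed order, and this is exactly what forces the appearance of $\ol{\pi}$ (which uses $\al_{\ol\mu}$) rather than $\pi$. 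Once these identities are in hand, a monomial $L_{(\mu_k,j_k)}\cdots L_{(\mu_1,j_1)}\rho(x)$ in $\B(\H)\,\ol{\otimes}\,\L_N$ maps under conjugation to $L_{\mu_k}\ol{\pi}(u_{\mu_k,j_k})\cdots L_{\mu_1}\ol{\pi}(u_{\mu_1,j_1})\ol{\pi}(x) = L_{\mu_k\dots\mu_1}\ol{\pi}(u_{\mu_k,j_k}\cdots u_{\mu_1,j_1}x)$, which by Proposition \ref{P:Fc al} lies in $\scp{\B(\H)}{\al}{\L_d}$; taking w*-closed spans gives the first containment. For the reverse containment, given a generator $L_\mu \ol{\pi}(x)$ of $\scp{\B(\H)}{\al}{\L_d}$ with $|\mu| = m$, conjugating by $U^{-1}$ and expanding via the inverse row operators should express it as a (w*-convergent) sum $\sum_{j} V_{(\mu_m,j_m)}\cdots V_{(\mu_1,j_1)}\rho(v_{\mu_1,j_1}\cdots v_{\mu_m,j_m}\, x)$, with the convergence controlled exactly as in the proof of Proposition \ref{P:com is alg} by the estimate $\|\sum_{j\in F} u_{i,j}v_{i,j}\| \le 1$; each term lies in $\B(\H)\,\ol{\otimes}\,\L_N$ and so does the limit.

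The main obstacle I expect is bookkeeping rather than conceptual depth: keeping the word-reversal conventions straight across $U$, $L_\mu$ versus $R_\mu$, and $\pi$ versus $\ol\pi$, and verifying that the relevant infinite sums (when some $n_i = \infty$) converge in the w*-topology with uniform norm bounds. The uniform-boundedness hypothesis on $\{u_i\}$ is used in two essential places — to guarantee $U$ and $U^{-1}$ are bounded (already established above, with $\max\{\|U\|,\|U^{-1}\|\} = K$), and implicitly to keep the Fourier-coefficient pieces bounded — so once conjugation by $U$ is known to be a bicontinuous algebra isomorphism of $\B(\H\otimes\ell^2(\bF_+^N))$ onto $\B(\H\otimes\ell^2(\bF_+^d))$, the two containments above complete the proof. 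I would also remark that by the same argument applied to the right creation operators (replacing $\Bl$ by $\Br$ throughout) one gets $\scp{\B(\H)}{\al}{\R_d}$ similar to $\B(\H)\,\ol{\otimes}\,\R_N$, though that is not needed for the statement as written.
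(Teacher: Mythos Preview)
Your proposal is correct and follows essentially the same approach as the paper: both arguments conjugate by the invertible $U$ and verify the key identities $U\rho(x)U^{-1}=\ol{\pi}(x)$ and $U L_{(i,j_i)} U^{-1}=L_i\ol{\pi}(u_{i,j_i})$ (equivalently $U^{-1}L_iU=\sum_{j_i}L_{i,j_i}\rho(v_{i,j_i})$) on elementary tensors, then check that generators map onto generators. The paper's writeup is slightly more economical in that it works only with single-letter generators and recovers $L_{(i,j_i)}$ from $U^{-1}L_iU\cdot\rho(u_{i,j_i})$ rather than expanding a general $L_\mu\ol{\pi}(x)$; your invocation of Fej\'er's Lemma is harmless but unnecessary, since conjugation by a bounded invertible is automatically a w*-bicontinuous algebra isomorphism and it suffices to match generators.
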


\begin{proof}
We will show that the constructed $U$ yields the required similarity.
To this end we apply for $x \in \B(\H)$ to obtain
\begin{align*}
\ol{\pi}(x) U \xi \otimes e_{(\mu_k, j_k) \dots (\mu_1, j_1)}
& =
\al_{\mu_1} \cdots \al_{\mu_k}(x) u_{\mu_1, j_1} \cdots u_{\mu_k, j_k} \xi \otimes e_{\mu_k \dots \mu_1} \\
& =
u_{\mu_1, j_1} \cdots u_{\mu_k, j_k} x \xi \otimes e_{\mu_k \dots \mu_1} \\
& =
U \rho(x) \xi \otimes e_{(\mu_k, j_k) \dots (\mu_1, j_1)}
\end{align*}
where we used that $\al_{\mu_i}(x) u_{\mu_i, j_i} = u_{\mu_i, j_i} x$.
On the other hand we have that
\begin{align*}
L_i U \xi \otimes e_{(\mu_k, j_k) \dots (\mu_1, j_1)}
& =
L_i u_{\mu_1, j_1} \cdots u_{\mu_k, j_k} \xi \otimes e_{\mu_k \dots \mu_1} \\
& =
u_{\mu_1, j_1} \cdots u_{\mu_k, j_k} \xi \otimes e_{i \mu_k \dots \mu_1}
\end{align*}
whereas
\begin{align*}
U \sum_{j_i \in [n_i]} L_{i, j_i} \rho(v_{i, j_i}) \xi \otimes e_{(\mu_k, j_k) \dots (\mu_1, j_1)}
& = \\
& \hspace{-3cm} = 
U \sum_{j_i \in [n_i]} v_{i, j_i} \xi \otimes e_{(i,j _i) (\mu_k, j_k) \dots (\mu_1, j_1)} \\
& \hspace{-3cm} = 
\sum_{j_i \in  [n_i]} u_{\mu_1, j_1} \dots u_{\mu_k, j_k} u_{i, j_i} v_{i, j_i} \xi \otimes e_{i \mu_k \dots \mu_1} \\
& \hspace{-3cm} = 
u_{\mu_1, j_1} \dots u_{\mu_k, j_k} \xi \otimes e_{i \mu_k \dots \mu_1}
\end{align*}
since $\sum_{j_i \in [n_i]} u_{i, j_i} v_{i, j_i} = I$.
Hence we obtain that
\[
U^{-1} L_i U = \sum_{j_i \in [n_i]} L_{i, j_i} \rho(v_{i, j_i}) \foral i \in [d].
\]
Therefore the generators of $\scp{\B(\H)}{\al}{\L_d}$ are mapped into $\B(\H) \, \ol{\otimes} \, \bF_+^N$.
We need to show that the elements $\rho(x)$ and $U^{-1} L_i U$ also generate the elements 
\[
L_{i, j_{i}} \foral (i, j_{i}) \in ([d], [n_{i}]).
\]
Since every $u_{i , j_{i}}$ is in $\B(\H)$ we have that
\[
U^{-1} L_i U \rho(u_{i , j_{i}}) = \sum_{j_i \in [n_i]} L_{i, j_i} \rho(v_{i, j_i}) \rho(u_{i , j_{i}}) = L_{i, j_{i}}
\]
and the proof is complete.
\end{proof}

\begin{theorem}\label{T:cun ref}
Let $(\A, \{\al_i\}_{i \in [d]})$ be a w*-dynamical system of a uniformly bounded spatial action.
Suppose that every $\al_i$ is given by an invertible row operator $u_i = [u_{i, j_i}]_{j_i \in [n_i]}$ and set $N = \sum_{i \in [d]} n_i$.

(i) If $N \geq 2$ then every w*-closed subspace of $\scp{\A}{\al}{\L_d}$ or $\scp{\A}{\al}{\R_d}$ is  hyper-reflexive.
If $K$ is the uniform bound related to $\{u_i\}$ then the hyper-reflexivity constant is at most $3 \cdot K^4$.

(ii) If $N =1$ and $\A$ is reflexive then $\scp{\A}{\al}{\L_d} = \scp{\A}{\al}{\R_d} = \scp{\A}{\al}{\bZ_+}$ is reflexive.
\end{theorem}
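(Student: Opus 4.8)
\emph{Part (i).}
Since each $\al_i$ is implemented by an invertible row operator it extends to an endomorphism of $\B(\H)$ (the remark following Proposition~\ref{P:com al}), and for these extensions $\scp{\A}{\al}{\L_d}$ is a w*-closed subspace of $\scp{\B(\H)}{\al}{\L_d}$, the two spanning families $\{L_\mu \ol{\pi}(a)\}$ and $\{L_\mu \ol{\pi}(x)\}$ differing only in the range of the argument. Thus every w*-closed subspace $\S$ of $\scp{\A}{\al}{\L_d}$ is a w*-closed subspace of $\scp{\B(\H)}{\al}{\L_d}$. By Theorem~\ref{T:ue LN} there is a bounded invertible $U$ with $\max\{\nor{U},\nor{U^{-1}}\} = K$ and $U^{-1}(\scp{\B(\H)}{\al}{\L_d})U = \B(\H)\,\ol{\otimes}\,\L_N$; as conjugation by $U$ is a w*-homeomorphism, $\S_0 := U^{-1}\S U$ is a w*-closed subspace of $\B(\H)\,\ol{\otimes}\,\L_N$. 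When $N\geq 2$, the operators $I_\H\otimes\Br_1$ and $I_\H\otimes\Br_2$ are isometries with orthogonal ranges lying in $I_\H\,\ol{\otimes}\,\R_N$, which commutes with $\B(\H)\,\ol{\otimes}\,\L_N$ and hence with $\S_0$; consequently $(I_\H\otimes\Br_j)^*\S_0(I_\H\otimes\Br_k) = \de_{j,k}\S_0\subseteq\S_0$, so by the subspace form of Bercovici's theorem \cite{Ber98} the space $\S_0$ is hyper-reflexive with distance constant at most $3$. As (hereditary) hyper-reflexivity is preserved under similarity, the constant being multiplied by at most $\nor{U}^2\nor{U^{-1}}^2\leq K^4$, the subspace $\S$ is hyper-reflexive with constant at most $3K^4$. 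Running the same argument with $\R_N$ in place of $\L_N$ (using the isometries $I_\H\otimes\Bl_j$ in the commutant $I_\H\,\ol{\otimes}\,\L_N$) settles the statement for $\scp{\A}{\al}{\R_d}$.

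\emph{Part (ii).}
If $N=1$ then $d=1$ and $n_1=1$, so $\al$ reduces to a single endomorphism $\ad_u$ of $\A$ with $u$ invertible, the uniform bound forces $\sup_{m\in\bZ_+}\nor{u^m}<\infty$ and $\sup_{m\in\bZ_+}\nor{u^{-m}}<\infty$, and $\scp{\A}{\al}{\L_d} = \scp{\A}{\al}{\R_d} = \scp{\A}{\al}{\bZ_+}$ is the usual one-variable w*-semicrossed product. Its reflexivity for reflexive $\A$ is exactly the one-variable result of \cite{Kak09}. Alternatively, by Sz.-Nagy's theorem $u = s^{-1}ws$ for a unitary $w$, and the diagonal similarity $\bigoplus_m s\otimes p_m$ carries $\scp{\A}{\al}{\bZ_+}$ onto $\scp{s\A s^{-1}}{\ad_w}{\bZ_+}$ with $w$ unitary and $s\A s^{-1}$ reflexive, reducing matters to the unitarily implemented case of \cite{Kak09}.

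\emph{The main obstacle.}
The delicate point is precisely the subspace version of Bercovici's theorem used above: the statement of \cite{Ber98} quoted in this paper is phrased for wot-closed \emph{algebras}, and one must confirm that the factorization of functionals in the pre-annihilator --- equivalently, the presence of two isometries with orthogonal ranges in the commutant of the subspace --- still delivers distance constant $3$ for an arbitrary w*-closed \emph{subspace} of $\B(\H)\,\ol{\otimes}\,\L_N$. This is what produces the bound $3K^4$; routing instead through the $\bA_1(1)$ property of $\B(\H)\,\ol{\otimes}\,\L_N$ and \cite{KL86, Dav87} would only give $7K^4$.
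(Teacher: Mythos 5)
Your proof is correct and follows essentially the same route as the paper: embed $\scp{\A}{\al}{\L_d}$ into $\scp{\B(\H)}{\al}{\L_d}$, use Theorem~\ref{T:ue LN} to pass by similarity to $\B(\H)\,\ol{\otimes}\,\L_N$, invoke Bercovici's theorem via the isometries of orthogonal ranges in the commutant for $N\geq 2$ (with the constant degrading by at most $\nor{U}^2\nor{U^{-1}}^2\leq K^4$), and cite \cite[Theorem 2.9]{Kak09} for $N=1$. The subtlety you flag about applying \cite{Ber98} to w*-closed subspaces rather than subalgebras is exactly the point the paper also relies on to get $3\cdot K^4$ instead of the $7\cdot K^4$ that would come from \cite{KL86, Dav87}.
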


\begin{proof}
If every $\al_i$ is implemented by an invertible row operator $u_i$ then $(\A, \{\al_i\}_{i \in [d]})$ extends to $(\B(\H), \{\al_i\}_{i \in [d]})$ so that
\[
\scp{\A}{\al}{\L_d} \subseteq \scp{\B(\H)}{\al}{\L_d} \simeq \B(\H) \, \ol{\otimes} \, \L_N
\]
by Theorem \ref{T:ue LN}.
If $N \geq 2$ then every w*-closed subspace of $\B(\H) \, \ol{\otimes} \, \L_N$ is hyper-reflexive with distance constant at most $3$ by \cite{Ber98}.
As hyper-reflexivity is preserved under taking similarities the proof of item (i) is complete.
Item (ii) follows by \cite[Theorem 2.9]{Kak09}.
\end{proof}

\begin{corollary}\label{C:cun ref}
Let $(\A, \{\al_i\}_{i \in [d]})$ be a w*-dynamical system so that every $\al_i$ is given by a Cuntz family $[s_{i, j_i}]_{j_i \in [n_i]}$.
If $N = \sum_{i \in [d]} n_i \geq 2$ then every w*-closed subspace of $\scp{\A}{\al}{\L_d}$ or $\scp{\A}{\al}{\R_d}$ is hyper-reflexive with distance constant at most $3$.
\end{corollary}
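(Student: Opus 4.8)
The plan is to deduce the statement from Theorem \ref{T:cun ref}(i) by observing that a Cuntz family is an invertible row operator whose associated composite operators are all contractions, so that the uniform bound is $K=1$.

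First I would record that the row operator $u_i := [s_{i,j_i}]_{j_i \in [n_i]} \in \B(\H \otimes \ell^2(n_i), \H)$ attached to a Cuntz family $[s_{i,j_i}]_{j_i \in [n_i]}$ is invertible in the sense of the excerpt, with inverse the column operator $v_i := [s_{i,j_i}^*]_{j_i \in [n_i]}^t$. Indeed, the isometry relations $s_{i,j_i}^* s_{i,k_i} = \de_{j_i,k_i} I_\H$ give $v_i u_i = I_{\H \otimes \ell^2(n_i)}$, while the Cuntz relation $\sum_{j_i} s_{i,j_i} s_{i,j_i}^* = I_\H$ gives $\sum_{j_i} u_{i,j_i} v_{i,j_i} = I_\H$. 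In particular each $\al_i$ is the spatial endomorphism implemented by $u_i$ as in \eqref{eq:uba defn}.

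Next I would verify that $\{u_i\}_{i \in [d]}$ is uniformly bounded with constant $K=1$. Each $u_i$ is a row coisometry and each $v_i$ a column isometry, and each tensor inflation $u_i \otimes I$ (resp. $v_i \otimes I$) is again a coisometry (resp. isometry). Hence every operator $\wh{u}_{\mu_m \dots \mu_1}$ of Definition \ref{D:ubo defn}, being a product of coisometries, satisfies $\|\wh{u}_\mu\| \leq 1$, and every $\wh{v}_{\mu_1 \dots \mu_m}$, being a product of isometries, satisfies $\|\wh{v}_\mu\| \leq 1$. Thus $\sup_\mu \|\wh{u}_\mu\| = \sup_\mu \|\wh{v}_\mu\| = 1$, so $\{\al_i\}_{i \in [d]}$ is a uniformly bounded spatial action (Definition \ref{D:uba defn}, Proposition \ref{P:uba sys}) with uniform bound $K=1$.

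Finally, since $N = \sum_{i \in [d]} n_i \geq 2$, Theorem \ref{T:cun ref}(i) applies verbatim and gives that every w*-closed subspace of $\scp{\A}{\al}{\L_d}$ or $\scp{\A}{\al}{\R_d}$ is hyper-reflexive with distance constant at most $3 \cdot K^4 = 3$, as required. There is no genuine obstacle here beyond the bookkeeping of the previous paragraph: the only point that deserves care is that products of the inflated Cuntz (co)isometries remain contractive, which is precisely what upgrades the uniform bound from $K < \infty$ to $K = 1$ and hence yields the clean constant $3$.
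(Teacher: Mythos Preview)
Your proposal is correct and is exactly the intended argument: the paper states Corollary~\ref{C:cun ref} without proof immediately after Theorem~\ref{T:cun ref}, and the only content is that a Cuntz family yields an invertible row operator with uniform bound $K=1$, so Theorem~\ref{T:cun ref}(i) gives the constant $3\cdot K^4=3$. (In fact each $u_i$ is even a unitary, not merely a coisometry, but your weaker observation already suffices.)
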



\begin{corollary}\label{C:masa ref}
Let $(\A, \{\al_i\}_{i \in [d]})$ be a system of w*-continuous automorphisms on a maximal abelian selfadjoint algebra $\A$.
Then $\scp{\A}{\al}{\L_d}$ and $\scp{\A}{\al}{\R_d}$ are reflexive.
\end{corollary}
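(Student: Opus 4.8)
The plan is to recognize the system of Corollary \ref{C:masa ref} as a uniformly bounded spatial action and then quote Theorem \ref{T:cun ref}; the only point that is not a direct specialization of the machinery already developed is the observation that every w*-continuous (i.e.\ normal) $*$-automorphism of a maximal abelian selfadjoint algebra is spatially implemented by a unitary of $\B(\H)$. So first I would record that a maximal abelian selfadjoint algebra $\A \subseteq \B(\H)$ satisfies $\A' = \A$; in particular $\A$ is a von Neumann algebra and hence reflexive. Next, using the classical fact that normal automorphisms of a maximal abelian selfadjoint algebra are point transformations (see, e.g., \cite{KR86}), for each $i \in [d]$ there is a unitary $u_i \in \B(\H)$ with $\al_i(a) = u_i a u_i^*$ for all $a \in \A$. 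Viewing $u_i$ as a single-entry row operator (so that $n_i = 1$), its inverse column operator is $u_i^*$, and therefore each $\al_i$ is implemented by an invertible row operator in the sense of equation (\ref{eq:uba defn}).

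It then remains to check the uniform bound hypothesis of Definition \ref{D:uba defn}. Since $n_i = 1$ for every $i$, for a word $\mu = \mu_m \cdots \mu_1 \in \bF_+^d$ the operator $\wh{u}_\mu$ is simply the product $u_{\mu_m} \cdots u_{\mu_1}$, which is a unitary, and likewise $\wh{v}_\mu = u_{\mu_1}^* \cdots u_{\mu_m}^*$ is a unitary; hence $\|\wh{u}_\mu\| = \|\wh{v}_\mu\| = 1$ for all $\mu$. Thus $\{\al_i\}_{i \in [d]}$ is a uniformly bounded spatial action with uniform bound $K = 1$, and the capacity of the system is $N = \sum_{i \in [d]} n_i = d$.

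Finally I would invoke Theorem \ref{T:cun ref}. If $d \geq 2$ (including $d = \infty$) then $N = d \geq 2$, so every w*-closed subspace of $\scp{\A}{\al}{\L_d}$ and of $\scp{\A}{\al}{\R_d}$ is hyper-reflexive; in particular these two algebras are themselves reflexive. If $d = 1$ then $N = 1$ and $\scp{\A}{\al}{\L_1} = \scp{\A}{\al}{\R_1} = \scp{\A}{\al}{\bZ_+}$; since $\A$ is reflexive, part (ii) of Theorem \ref{T:cun ref} yields that this algebra is reflexive. The one step that genuinely needs an external input is the spatial implementation of the $\al_i$; once that is in hand the rest is a clean specialization, with the pleasant feature that all the implementing row operators are unitaries, so that the uniform bound is automatic with constant $1$ and the hyper-reflexivity constant in the case $d\ge 2$ is in fact at most $3$.
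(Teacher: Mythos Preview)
Your proof is correct and follows exactly the approach the paper intends: the corollary is stated in the paper without proof, as an immediate consequence of Theorem \ref{T:cun ref} once one knows that every w*-continuous automorphism of a m.a.s.a.\ is implemented by a unitary in $\B(\H)$. You have supplied precisely this missing verification, together with the observation that unitaries give uniform bound $K=1$, and then correctly split into the cases $d\ge 2$ (part (i)) and $d=1$ (part (ii), using that $\A=\A'$ is reflexive).
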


\begin{remark}\label{R:ref ind}
When $\A$ is reflexive, we can have an independent proof of reflexivity of $\scp{\A}{\al}{\L_d}$ that does not go through hyper-reflexivity.
First note that if an operator $T$ is in $\Ref(\scp{\A}{\al}{\L_d})$ then $T$ is left lower triangular and $T_{\mu w, w} \in \Ref(\A)$ for every $\mu, w \in \bF_+^d$.
Indeed for $\xi, \eta \in \H$ and $\nu, \nu' \in \bF_+^d$ there is a sequence $F_n \in \scp{\A}{\al}{\L_d}$ such that 
\begin{align*}
\sca{T_{\nu', \nu}\xi, \eta}
& =
\sca{T \xi \otimes e_\nu, \eta \otimes e_{\nu'}} \\
& =
\lim_n \sca{F_n \xi \otimes e_\nu, \eta \otimes e_{\nu'}}
=
\lim_n \sca{[F_n]_{\nu', \nu}\xi, \eta}.
\end{align*}
Taking $\nu \not<_l \nu'$ gives that $T$ is left lower triangular as all $F_n$ are so.
Taking $\nu' = \mu \nu$ yields $[F_n]_{\mu \nu, \nu} \in \A$ and thus $T_{\mu \nu, \nu} \in \Ref(\A)$.
Now if $\{\al_i\}_{i \in [d]}$ is a uniformly bounded spatial action then $T \in \scp{\B(\H)}{\al}{\L_d}$.
Therefore $T$ is left lower triangular and for $m \in \bZ_+$  we have that $G_m(T) = \sum_{|\mu| = m} L_\mu \ol{\pi}(T_{\mu, \mt})$ with $T_{\mu, \mt} \in \Ref(\A) = \A$.
\end{remark}

\begin{remark}
Even though reflexivity of $\A$ directly implies reflexivity of the w*-semicrossed products the converse does not hold.

For example suppose that each $\al_i$ is implemented by a single invertible $u_i$.
Then we can extend $(\A, \{\al_i\}_{i \in [d]})$ to the system $(\Ref(\A), \{\al_i\}_{i \in [d]})$.
If $d \geq 2$ then both $\scp{\A}{\al}{\L_d}$ and $\scp{\Ref(\A)}{\al}{\L_d}$ are reflexive and
\[
\scp{\A}{\al}{\L_d} \subseteq \scp{\Ref(\A)}{\al}{\L_d}.
\]
This inclusion is proper when $\A$ is not reflexive, e.g. for $\A = \{aI + b E_{21} \mid a, b \in \bC\}$ in $\M_2(\bC)$.
In fact by taking the compression to the $(\mt,\mt)$-entry we see that $\scp{\A}{\al}{\L_d} = \scp{\Ref(\A)}{\al}{\L_d}$ if and only if $\A = \Ref(\A)$.
\end{remark}

The reflexivity results extend to systems over any factor.
This can be achieved by following the ingenious arguments of Helmer \cite{Hel14}.
Even though these are originally presented in \cite{Hel14} for Type II or III factors they apply as long as two basic properties are satisfied.
We isolate these below.

\begin{definition}
An algebra $\A \subseteq \B(\H)$ is \emph{injectively reducible} if there is a non-trivial reducing subspace $M$ of $\A$ such that the representations
\[
a \mapsto a|_{M} \qand a \mapsto a|_{M^\perp}
\]
are both injective.
\end{definition}

\begin{definition}
A w*-dynamical system $(\A, \{\al_i\}_{i \in [d]})$ is \emph{injectively reflexive} if: (i) $\A$ is reflexive; (ii) $\A$ is injectively reducible by some $M$; and (iii) $\be_\nu(\A)$ is reflexive for all $\nu \in \bF_+^d$ with
\[
\be_\nu(a) = \begin{bmatrix} a|_{M} & 0 \\ 0 & \al_\nu(a)|_{M^\perp} \end{bmatrix}.
\]
\end{definition}

It is immediate that dynamical systems over Type II or Type III factors are injectively reflexive.

\begin{theorem}\label{T:Hel} \cite[Theorem 3.18]{Hel14}
If $(\A, \{\al_i\}_{i \in [d]})$ is an injectively reflexive unital w*-dynamical system then $\scp{\A}{\al}{\L_d}$ and $\scp{\A}{\al}{\R_d}$ are reflexive.
\end{theorem}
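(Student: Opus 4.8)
The plan is to adapt the argument of \cite{Hel14}, organised in three stages: a gauge reduction to the Fourier coefficients, location of the corners of $T$ inside $\A$, and propagation of these to a full covariance relation using injective reducibility together with reflexivity of the block algebras $\be_\nu(\A)$. I shall carry it out for $\scp{\A}{\al}{\L_d}$; the statement for $\scp{\A}{\al}{\R_d}$ follows by the symmetric argument, replacing $L_\mu$ and $\ol\pi$ by $R_\mu$ and $\pi$ and using the right-hand form of Proposition \ref{P:lower triangular}.

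Let $T \in \Ref(\scp{\A}{\al}{\L_d}) = \Alg\Lat(\scp{\A}{\al}{\L_d})$. The gauge unitaries $U_s$ normalise $\scp{\A}{\al}{\L_d}$, hence permute its invariant subspaces, so $U_s T U_s^* \in \Ref(\scp{\A}{\al}{\L_d})$ for all $s \in [-\pi,\pi]$; averaging, $G_m(T) \in \Ref(\scp{\A}{\al}{\L_d})$ for every $m$, and by Fej\'er's Lemma it suffices to prove $G_m(T) \in \scp{\A}{\al}{\L_d}$ for each $m$. Exactly as in Remark \ref{R:ref ind}, testing $T$ against the cyclic subspaces $\ol{\scp{\A}{\al}{\L_d}(\xi\otimes e_\nu)}$ shows that $T$ is left lower triangular and that $T_{\mu,\mt} \in \Ref(\A) = \A$ for every $\mu$. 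By Propositions \ref{P:lower triangular} and \ref{P:Fc al} it then remains to establish the covariance identities
\[
T_{\mu w,w} = \al_{\ol w}(T_{\mu,\mt}) \qfor \mu, w \in \bF_+^d ,
\]
since, granting them, $\sum_{w\in\bF_+^d} T_{\mu w,w}\otimes p_w = \ol\pi(T_{\mu,\mt})$ and $G_m(T) = \sum_{|\mu|=m} L_\mu\ol\pi(T_{\mu,\mt}) \in \scp{\A}{\al}{\L_d}$.

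For the covariance identities, fix $w$, put $\nu = \ol w$, and observe first that $T_{\mu w,w} \in \Ref(\al_{\ol w}(\A))$, while $M$ and $M^\perp$ reduce $\al_{\ol w}(\A) \subseteq \A$; hence $T_{\mu w,w}$ is block--diagonal for $M \oplus M^\perp$ and $\Ref(\al_{\ol w}(\A))$ splits along $M \oplus M^\perp$. Following \cite{Hel14}, I would feed $T$ the vectors $\xi_1\otimes e_\mt + \xi_2\otimes e_w$ with $\xi_1 \in M$, $\xi_2 \in M^\perp$, and compress the inclusion $T(\xi_1\otimes e_\mt + \xi_2\otimes e_w)\in\ol{\scp{\A}{\al}{\L_d}(\xi_1\otimes e_\mt + \xi_2\otimes e_w)}$ to the two coordinates $e_\mt, e_w$ (and to $e_\mu, e_{\mu w}$ in general, absorbing the auxiliary contributions via $T_{w,\mt}\in\A$); since $M$ reduces $\A$ and $M^\perp$ reduces $\al_\nu(\A)$, the compressed cyclic subspace is the graph of $a\mapsto\be_\nu(a)$, so this produces an operator lying in $\Ref(\be_\nu(\A))$ whose two blocks are $T_{\mu,\mt}|_M$ and $T_{\mu w,w}|_{M^\perp}$. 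By hypothesis (iii) that operator equals $\be_\nu(b)$ for some $b\in\A$; comparing the $M$-blocks and using injectivity of $a\mapsto a|_M$ forces $b = T_{\mu,\mt}$, whence $T_{\mu w,w}|_{M^\perp} = \al_{\ol w}(T_{\mu,\mt})|_{M^\perp}$. The matching identity on $M$, and thereby the full equality $T_{\mu w,w} = \al_{\ol w}(T_{\mu,\mt})$ for all words $w$, would then follow by running the same device with the roles of $M$ and $M^\perp$ interchanged (legitimate since injective reducibility, and hence condition (iii) for the corresponding block algebras, is symmetric in $M$ and $M^\perp$), organised by an induction on $|w|$ parallel to the inductive step in the proof of Proposition \ref{P:spatial}, with the block operators $\be_\nu$ playing the role of the eigenvector computations there.

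The step I expect to be the main obstacle is precisely this last one: choosing the test vectors and coordinate compressions so that the compressed cyclic subspaces are \emph{exactly} the graphs of the $\be_\nu$, with no residual coupling from other words, and carrying the conclusion through both $M$ and $M^\perp$ and along all words, so that the covariance holds on $\H$ rather than merely on a corner. Everything preceding it --- the gauge reduction, left lower triangularity, the location of $T_{\mu,\mt}$ in $\A$, and the passage from the covariance identities back to $G_m(T) \in \scp{\A}{\al}{\L_d}$ --- is routine from Fej\'er's Lemma and Propositions \ref{P:lower triangular} and \ref{P:Fc al}.
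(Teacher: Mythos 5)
Your skeleton (gauge reduction to the $G_m(T)$, left lower triangularity, $T_{\mu,\mt}\in\Ref(\A)=\A$, reduction to the covariance identities, and the final appeal to reflexivity of the block algebras) is exactly the paper's strategy, but the step you defer as ``the main obstacle'' is the entire content of the proof, and the device you sketch does not close it as written. If you compress the cyclic subspace $\ol{\scp{\A}{\al}{\L_d}\,(\xi_1\otimes e_\mt+\xi_2\otimes e_w)}$ to the two coordinates $e_\mu$ and $e_{\mu w}$, the generator $L_{\mu w}\ol{\pi}(a)$ deposits $a\xi_1$ in the $e_{\mu w}$ slot, and when $w\leq_r\mu$ the generator $L_{\mu'}\ol{\pi}(a)$ with $\mu'w=\mu$ deposits $\al_{\ol{w}}(a)\xi_2$ in the $e_\mu$ slot; so the compressed subspace strictly contains the graph of $a\mapsto\be_{\ol{w}}(a)$ and membership in it gives you nothing. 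The missing idea is that $M$ and $M^\perp$ must be built into the compression itself: one must cut the $e_\mu$ slot down to $M$ and the $e_{\mu w}$ slot down to $M^\perp$. Since $M$ reduces $\A$ and all relevant entries lie in $\A$, the parasitic terms $a\xi_1\in M$ and $\al_{\ol{w}}(a)\xi_2\in M^\perp$ are then annihilated, and one obtains precisely $T_{\mu,\mt}|_{M}\oplus T_{\mu w,w}|_{M^\perp}\in\Ref(\be_{\ol{w}}(\A))=\be_{\ol{w}}(\A)$. The paper achieves this cleanly by compressing not to a cyclic subspace but to the sum of two genuine invariant subspaces of the whole algebra — in the right-handed version it writes out, $\K_0=\ol{\spn}\{\xi\otimes e_v\mid\xi\in M,\,v\in\bF_+^d\}$ and $\K_\nu=\ol{\spn}\{\eta\otimes e_{\nu v}\mid\eta\in M^\perp,\,v\in\bF_+^d\}$ — so that $G_m(T)p\in\Ref\bigl((\scp{\A}{\al}{\R_d})p\bigr)$ for $p$ the projection onto $\K_0\oplus\K_\nu$, and then conjugating by the unitary $\xi\otimes e_v+\eta\otimes e_{\nu v}\mapsto(\xi+\eta)\otimes e_v$, after which the $(\ol{\mu},\mt)$-block of the compressed algebra is $\be_\nu(\A)$ on the nose, with no residual coupling to absorb.

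A second, smaller defect: your plan to recover the identity on $M$ by ``interchanging the roles of $M$ and $M^\perp$'' is not licensed by the hypotheses — condition (iii) asserts reflexivity of $\{a|_{M}\oplus\al_\nu(a)|_{M^\perp}\}$ only, not of the swapped algebra $\{a|_{M^\perp}\oplus\al_\nu(a)|_{M}\}$, and injective reducibility of $\A$ does not imply the latter. Fortunately no swap (and no induction on $|w|$) is needed. From $T_{\mu,\mt}|_{M}\oplus T_{\mu w,w}|_{M^\perp}=\be_{\ol{w}}(b)$ for some $b\in\A$, injectivity of $x\mapsto x|_{M}$ on $\A$ forces $b=T_{\mu,\mt}$; and since $T_{\mu w,w}\in\Ref(\al_{\ol{w}}(\A))\subseteq\Ref(\A)=\A$ as well, injectivity of $x\mapsto x|_{M^\perp}$ on $\A$ upgrades the equality $T_{\mu w,w}|_{M^\perp}=\al_{\ol{w}}(T_{\mu,\mt})|_{M^\perp}$ to $T_{\mu w,w}=\al_{\ol{w}}(T_{\mu,\mt})$ on all of $\H$ in one stroke. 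This is exactly how the paper concludes.
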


\begin{proof}
The left version is \cite[Theorem 3.18]{Hel14} after translating from the W*-cor\-respondences terminology.
To exhibit this we will show how the right case can be shown in our context.

Fix $T \in \Ref(\scp{\A}{\al}{\R_d})$.
If $m<0$ then $G_m(T) = 0$ by Remark \ref{R:ref ind}.
If $m \geq 0$ then $T_{\mu,\mt} \in \A$ by the same remark.
Thus it suffices to show that $T_{\nu \ol{\mu},\nu} = \al_\nu(T_{\mu,\mt})$ for every $\nu \in \bF_+^d$.
By assumption let $M$ be the subspace that injectively reduces $\A$.
We henceforth fix a word $\nu \in \bF_+^d$ and we define the subspaces of $\K$
\[
\K_0 := \ol{\spn}\{\xi \otimes e_w \mid \xi \in M, w \in \bF_+^d\}
\]
and
\[
\K_\nu := \ol{\spn}\{\eta \otimes e_{\nu w} \mid \eta \in M^\perp, w \in \bF_+^d\}.
\]
Both $\K_0$ and $\K_\nu$ are invariant subspaces of $\scp{\A}{\al}{\R_d}$.
If $p$ is the projection on $\K_0 \oplus K_\nu$ then we have that $G_m(T) p \in \Ref( (\scp{\A}{\al}{\R_d}) p)$.
We will use the unitary
\[
U \colon p\K \to \K: \xi \otimes e_w + \eta \otimes e_{\nu w} \mapsto (\xi + \eta) \otimes e_w.
\]
A straightforward computation shows that
\[
U \pi(a) p U^* = \sum_{w \in \bF_+^d} (\al_w(a)|_{M} + \al_{\nu w}(a)|_{M^\perp}) \otimes p_w
\]
and that $U R_i p U^* = R_i$.
In particular $p$ is reducing for $R_i$ and we get
\[
U G_m(T) p U^* = 
\sum_{|\mu| = m} \sum_{w \in \bF_+^d} R_\mu (T_{w \ol{\mu}, w}|_{M} + T_{\nu w \ol{\mu}, \nu w}|_{M^\perp}) \otimes p_w.
\]
By taking compressions we thus have that the $(\ol{\mu}, \mt)$-entry of the operator $U G_m(T) p U^*$  is in the reflexive cover of the $(\ol{\mu}, \mt)$-block of the algebra $\Ref( U (\scp{\A}{\al}{\R_d}) p U^*)$.
However the latter coincides with (the reflexive cover of, and hence with) $\be_\nu(\A)$ defined above.
Hence there is an $a \in \A$ such that
\[
T_{\ol{\mu}, \mt}|_{M} + T_{\nu \ol{\mu}, \nu}|_{M^\perp} = a|_{M} + \al_{\nu}(a)|_{M^\perp}.
\]
Since the restrictions to $M$ and $M^\perp$ are injective we derive that $T_{\ol{\mu}, \mt} = a$ and $T_{\nu \ol{\mu}, \nu} = \al_\nu(a) = \al_\nu(T_{\ol{\mu}, \mt})$, which completes the proof.
\end{proof}

By combining Theorem \ref{T:cun ref} with Theorem \ref{T:Hel} we get the next corollary.

\begin{corollary}\label{C:factor ref}
Let $(\A, \{\al_i\}_{i \in [d]})$ be a unital w*-dynamical system on a factor $\A \subseteq \B(\H)$ for a separable Hilbert space $\H$.
Then $\scp{\A}{\al}{\L_d}$ and $\scp{\A}{\al}{\R_d}$ are reflexive.
\end{corollary}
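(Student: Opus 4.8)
The plan is to split along the type of the factor, invoking Theorem \ref{T:Hel} when $\A$ is of Type II or Type III and Theorem \ref{T:cun ref} when $\A$ is of Type I; since a factor is of one of these three types, the cases are exhaustive.

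For the first case I would argue that the system is injectively reflexive and then quote Theorem \ref{T:Hel}. The algebra $\A$ is a von Neumann algebra, hence reflexive; it is not $\B(\H)$, so $\A'$ properly contains $\bC I$ and therefore contains a projection $p$ with $p\ne 0,I$. Setting $M=p\H$, I would note that the compressions $a\mapsto a|_M$ and $a\mapsto a|_{M^\perp}$ are injective, the kernel of each being a w*-closed two-sided ideal of the factor $\A$ and hence trivial; thus $\A$ is injectively reducible by $M$. Together with reflexivity of each $\be_\nu(\A)$ — automatic for Type II and Type III factors, as recorded in the excerpt just before Theorem \ref{T:Hel} — this makes $(\A,\{\al_i\}_{i\in[d]})$ injectively reflexive, and Theorem \ref{T:Hel} gives reflexivity of $\scp{\A}{\al}{\L_d}$ and $\scp{\A}{\al}{\R_d}$.

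For the second case I would first reduce to $\A=\B(\H_0)$ acting standardly, for a separable Hilbert space $\H_0$: a Type I factor on a separable space is $*$-isomorphic to such a $\B(\H_0)$, and reflexivity of the w*-semicrossed products is invariant under unitary equivalence and under passing to ampliations (ampliations of reflexive algebras being reflexive). Then each $\al_i$ is a unital w*-continuous completely bounded endomorphism of $\B(\H_0)$; by Examples \ref{E:odometer} it is implemented by a Cuntz family $[s_{i,j_i}]_{j_i\in[n_i]}$ when $\dim\H_0=\infty$, and when $\dim\H_0<\infty$ the boundedness hypothesis $\sup_{\mu}\|\al_\mu\|<\infty$ forces the subgroup of $PGL(\H_0)$ generated by the implementing invertibles to be relatively compact, hence conjugate into the projective unitary group, so after one similarity each $\al_i$ is again implemented by a one-element Cuntz family. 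In all cases $\{\al_i\}_{i\in[d]}$ becomes, after a similarity, a uniformly bounded spatial action with $K=1$. I would then apply Theorem \ref{T:cun ref} with $N=\sum_{i\in[d]}n_i$: if $N\ge2$ the w*-semicrossed products are hyper-reflexive and a fortiori reflexive, while if $N=1$ then necessarily $d=1$ and $\al_1\in\Aut(\B(\H_0))$, so part (ii) of Theorem \ref{T:cun ref} applies since $\B(\H_0)$ is reflexive. Since reflexivity is preserved by similarity, it transfers to the original system.

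The main obstacle I anticipate is in the Type I case: handling a general completely bounded endomorphism of $\B(\H_0)$ that is not $*$-preserving, where one must combine the Cuntz-implementation of $*$-endomorphisms (Examples \ref{E:odometer}) with Haagerup's similarity theorem for completely bounded homomorphisms of $\cstar$-algebras and then verify that the uniform-bound condition of Definition \ref{D:ubo defn} survives the construction. For Types II and III the only non-formal input is the reflexivity of the $\be_\nu(\A)$, which I would take from the excerpt.
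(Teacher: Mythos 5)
Your Type II/III branch coincides with the paper's argument, but your case division by type is not the one the paper uses, and the difference matters. The paper splits into $\A = \B(\H)$ versus $\A \neq \B(\H)$: whenever the factor $\A$ is properly contained in $\B(\H)$ --- of \emph{any} type, including Type I --- its commutant contains a non-trivial projection $p$, the restrictions to $M = p\H$ and $M^\perp$ are injective (their kernels are w*-closed two-sided ideals of a factor), and each $\be_\nu(\A)$ is the range of an injective normal unital $*$-homomorphism of a von Neumann algebra, hence itself a von Neumann algebra and reflexive. So the injective-reflexivity argument you reserve for Types II and III already disposes of every Type I factor except $\B(\H)$ itself, via Theorem \ref{T:Hel}. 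The only case that genuinely needs Theorem \ref{T:cun ref} is $\A = \B(\H)$, where Examples \ref{E:odometer} (Arveson) gives that each $\al_i$ is implemented by a Cuntz family, so the action is uniformly bounded spatial with $K=1$; if $N \geq 2$ one gets hyper-reflexivity, and if $N=1$ then $d=1$ and $\al_1$ is an automorphism, so Theorem \ref{T:cun ref}(ii) applies because $\B(\H)$ is reflexive.

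Adopting that dichotomy removes the entire machinery of your Type I branch --- the reduction to standard form, the ampliation lemma, the relative compactness argument in $PGL(\H_0)$ --- and, more importantly, the step you yourself flag as the main obstacle. As written, that obstacle is a genuine gap: you never actually produce a Cuntz (or uniformly bounded invertible row-operator) implementation of a completely bounded endomorphism of $\B(\H_0)$ that is not $*$-preserving, and Theorem \ref{T:cun ref} cannot be invoked without one. The paper sidesteps this by reading ``endomorphism'' of a selfadjoint algebra as a normal unital $*$-endomorphism, which is how Examples \ref{E:odometer} is used throughout; under that convention your Haagerup-similarity detour is unnecessary, and without it your $\B(\H_0)$ case is incomplete. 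I would restructure along the paper's dichotomy and state the $*$-endomorphism convention explicitly, keeping your (correct) verification of injective reducibility for the complementary case.
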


\begin{proof}
We have that either $\A = \B(\H)$ or there is a non-trivial projection $p \in \A'$, and so the system is injectively reflexive.
\end{proof}

\subsection{Semicrossed products over $\bZ_+^d$}\label{Ss:scp com}

We now pass to the examination of $\bZ_+^d$.
When every $\al_{\Bi}$ is  given by an invertible row operator $u_{\Bi} = [u_{i, j_i}]_{j_i \in [n_i]}$ then we write $M = \prod_{i \in [d]} n_i$ for the capacity of the system.
Note that $M \geq 2$ if and only if there is at least one $\Bi$ such that $n_i \geq 2$.

\begin{theorem}\label{T:com ref}
Let $(\A, \al, \bZ_+^d)$ be a unital w*-dynamical system.
Suppose that every $\al_{\Bi}$ is uniformly bounded spatial, given by an invertible row operator $u_{\Bi} = [u_{i, j_i}]_{j_i \in [n_i]}$, and set $M = \prod_{i \in [d]} n_i$.

(i) If $M \geq 2$ then every w*-closed subspace of $\scp{\A}{\al}{\bZ_+^d}$ is hyper-reflexive.
If $K_i$ is the uniform bound associated to $u_{\Bi}$ (and its inverse) then the hyper-reflexivity constant is at most $3 \cdot K^4$ for $K = \min\{K_i \mid n_i \geq 2\}$.

(ii) If $M=1$ and $\A$ is reflexive then $\scp{\A}{\al}{\bZ_+^d}$ is reflexive.
\end{theorem}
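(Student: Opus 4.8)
The plan is to deduce Theorem \ref{T:com ref} from the one-variable results of Theorem \ref{T:cun ref} via the disintegration of Proposition \ref{P:disintegrate}. Since $\bZ_+^d$ is symmetric in its generators, I first relabel the directions. For item (i) the hypothesis $M = \prod_{i \in [d]} n_i \geq 2$ forces $n_{i_0} \geq 2$ for at least one $i_0$; I choose such an $i_0$ with $K_{i_0} = \min\{K_i \mid n_i \geq 2\}$ and permute the generators so that $i_0$ becomes the last one. Proposition \ref{P:disintegrate} then gives a unitary equivalence
\[
\scp{\A}{\al}{\bZ_+^d} \simeq \scp{\B}{\wh{\al}_{\bo{d}}}{\bZ_+},
\]
where $\B$ is the iterated w*-semicrossed product $(\cdots(\scp{\A}{\al_{\bo{1}}}{\bZ_+})\cdots)\ol{\times}_{\wh{\al}_{\bo{d-1}}}\bZ_+$, a w*-closed subalgebra of $\B(\H')$ with $\H' = \H \otimes \ell^2(\bZ_+^{d-1})$, and $\wh{\al}_{\bo{d}} = \al_{\bo{d}} \otimes^{(d-1)} \id$.

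Next I check that $(\B, \wh{\al}_{\bo{d}})$ is a one-variable uniformly bounded spatial system, so that Theorem \ref{T:cun ref} applies to it as a $\bF_+^1$-system with $N = n_d \geq 2$. Since $\al_{\bo{d}}$ is implemented on $\B(\H)$ by the invertible row operator $u_{\bo{d}} = [u_{d, j_d}]_{j_d \in [n_d]}$, the inflation $\wh{\al}_{\bo{d}}$ is implemented on all of $\B(\H')$ by $[\, u_{d, j_d} \otimes I \,]_{j_d \in [n_d]}$, and restricts to $\B$. For a word $\mu$ in the single generator $\bo{d}$ the operators of Definition \ref{D:ubo defn} satisfy $\wh{(u_{\bo{d}} \otimes I)}_\mu = \wh{(u_{\bo{d}})}_\mu \otimes I$, and likewise for the inverses, so they are uniformly bounded by the same constant $K_d$. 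Hence Theorem \ref{T:cun ref}(i) gives that every w*-closed subspace of $\scp{\B}{\wh{\al}_{\bo{d}}}{\bZ_+}$ is hyper-reflexive with distance constant at most $3 \cdot K_d^4$; a unitary equivalence transports hereditary hyper-reflexivity with the constant unchanged (the factor $\nor{u}^2 \nor{u^{-1}}^2$ from the discussion following Bercovici's theorem equals $1$ for a unitary). This proves item (i) with constant $3 \cdot K^4$ for $K = \min\{K_i \mid n_i \geq 2\}$.

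For item (ii) every $n_i = 1$, so each $\al_{\bo{i}}$ is implemented by a single invertible operator, and I argue by induction on $d$. The case $d = 1$ is exactly Theorem \ref{T:cun ref}(ii). For $d \geq 2$ the disintegration above gives $\scp{\A}{\al}{\bZ_+^d} \simeq \scp{\B}{\wh{\al}_{\bo{d}}}{\bZ_+}$ with $\B \simeq \scp{\A}{\al|_{\bZ_+^{d-1}}}{\bZ_+^{d-1}}$ reflexive by the inductive hypothesis, and $\wh{\al}_{\bo{d}}$ implemented by the single invertible $u_{\bo{d}} \otimes I$, which is uniformly bounded because the powers $(u_{\bo{d}} \otimes I)^m = u_{\bo{d}}^m \otimes I$ and their inverses are uniformly bounded in $m$ by the hypothesis on the original system. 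Theorem \ref{T:cun ref}(ii) applied with $d = 1$, $N = 1$ to the reflexive algebra $\B$ then shows that $\scp{\B}{\wh{\al}_{\bo{d}}}{\bZ_+}$, hence $\scp{\A}{\al}{\bZ_+^d}$, is reflexive.

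The one point I would treat with care, and the only real obstacle, is the middle step: that $\wh{\al}_{\bo{d}} = \al_{\bo{d}} \otimes^{(d-1)} \id$ really is the restriction to $\B$ of an endomorphism of all of $\B(\H')$ implemented by an invertible row operator whose uniform bound survives the tensoring with the identity, and that the relabelling used to bring the large direction to the end is compatible with the left-to-right shape of Proposition \ref{P:disintegrate}. Both follow from the commutativity of $\bZ_+^d$ and the fact that the $\wh{u}$-construction along a single direction only involves $u_{\bo{d}}$ itself, but they are exactly what legitimises the reduction to the one-variable theorem.
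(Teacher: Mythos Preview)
Your proof is correct and follows essentially the same route as the paper: reorder the generators so that a direction with $n_i \geq 2$ (chosen to minimise $K_i$) sits last, invoke Proposition~\ref{P:disintegrate} to write the semicrossed product as $\scp{\B}{\wh{\al}_{\bo{d}}}{\bZ_+}$, and then apply Theorem~\ref{T:cun ref} to this one-variable system; for item (ii) iterate Theorem~\ref{T:cun ref}(ii) along the decomposition. Your write-up is in fact more careful than the paper's in spelling out why $\wh{\al}_{\bo{d}}$ remains uniformly bounded spatial with the same constant $K_d$ and why the unitary equivalence preserves the hyper-reflexivity constant.
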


\begin{proof}
For item (i), suppose without loss of generality that $n_d \geq 2$ with $K_d = \min\{K_i \mid n_i \geq 2\}$.
Then we can write $\scp{\A}{\al}{\bZ_+^d} \simeq \scp{\B}{\wh{\al}_{\bo{d}}}{\bZ_+}$ for an appropriate w*-closed algebra $\B$ by Proposition \ref{P:disintegrate}.
Hence we can apply Theorem \ref{T:cun ref} for the system $(\B, \wh{\al}_{\bo{d}}, {\bZ_+})$, as its capacity is greater than $2$.
For item (ii) we can write $\scp{\A}{\al}{\bZ_+^d}$ as successive w*-semicrossed products and apply recursively \cite[Theorem 2.9]{Kak09}, i.e. Theorem \ref{T:cun ref}(ii).
\end{proof}

\begin{corollary}\label{C:com ref}
Let $(\A, \al, \bZ_+^d)$ be a unital w*-dynamical system.
Suppose that at least one $\al_{\Bi}$ is implemented by a Cuntz family $[s_{i, j_i}]_{j_i \in [n_i]}$ with $n_i \geq 2$.
Then every w*-closed subspace of $\scp{\A}{\al}{\bZ_+^d}$ is hyper-reflexive with distance constant $3$.
\end{corollary}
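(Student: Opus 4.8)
The plan is to deduce this corollary from Corollary \ref{C:cun ref}, applied in the single-variable case $d=1$, by disintegrating the $\bZ_+^d$-action one direction at a time as in Proposition \ref{P:disintegrate}. The feature that needs attention — and essentially the only obstacle — is that here just \emph{one} of the generators is assumed to come from a Cuntz family, whereas the remaining $\al_{\bo{i}}$ are arbitrary uniformly bounded w*-continuous completely bounded endomorphisms; so Theorem \ref{T:com ref} is not applicable verbatim, and the disintegration must be arranged so that the Cuntz direction is peeled off \emph{last}.

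First I would relabel the canonical generators of $\bZ_+^d$ so that the distinguished Cuntz family $[s_{d,j}]_{j \in [n_d]}$, with $n_d \geq 2$, implements $\al_{\bo{d}}$. Peeling off the directions $\bo{1}, \dots, \bo{d-1}$ via Proposition \ref{P:disintegrate} — which requires nothing of $\al_{\bo{1}}, \dots, \al_{\bo{d-1}}$ beyond being a commuting family of endomorphisms — yields a unitary equivalence
\[
\scp{\A}{\al}{\bZ_+^d} \;\simeq\; \scp{\B}{\wh{\al}_{\bo{d}}}{\bZ_+},
\]
where $\B$ is the resulting $\bZ_+^{d-1}$-w*-semicrossed product and $\wh{\al}_{\bo{d}} = \al_{\bo{d}} \otimes^{(d-1)} \id$. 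Using $s_{d,j}^* s_{d,k} = \de_{j,k} I$ and $\sum_j s_{d,j} s_{d,j}^* = I$ one checks at once that $\{s_{d,j} \otimes I\}_{j \in [n_d]}$ is again a Cuntz family, of the same multiplicity $n_d \geq 2$, implementing $\wh{\al}_{\bo{d}}$. Thus $(\B, \wh{\al}_{\bo{d}}, \bZ_+)$ is a unital w*-dynamical system whose single generator is implemented by a Cuntz family of multiplicity at least $2$, so Corollary \ref{C:cun ref} applies to it and every w*-closed subspace of $\scp{\B}{\wh{\al}_{\bo{d}}}{\bZ_+}$ is hyper-reflexive with distance constant at most $3$.

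It then remains to pull this back along the unitary equivalence of Proposition \ref{P:disintegrate}. Since that equivalence is implemented by a unitary it carries w*-closed subspaces bijectively to w*-closed subspaces and preserves the distance constant (the similarity factor $\nor{U}^2 \nor{U^{-1}}^2$ being $1$), which gives the claim for $\scp{\A}{\al}{\bZ_+^d}$. I would underline why the constant stays exactly $3$ rather than being degraded to $3 K^4$: when the implementing row operator comes from a Cuntz family it, together with its inverse, is isometric, so the operator $U$ of Theorem \ref{T:ue LN} — a direct sum of the operators $\wh{u}_\mu$, each of which is a product of unitaries — is itself unitary; hence the similarity of Theorem \ref{T:ue LN} is in fact a unitary equivalence onto a tensor product $\B(\H_0) \, \ol{\otimes} \, \L_{n_d}$, and Bercovici's distance constant $3$ from \cite{Ber98} transfers without inflation.
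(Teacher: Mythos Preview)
Your proof is correct and follows essentially the same route as the paper's: relabel so that the Cuntz direction is $\bo{d}$, disintegrate via Proposition \ref{P:disintegrate} to write $\scp{\A}{\al}{\bZ_+^d} \simeq \scp{\B}{\wh{\al}_{\bo{d}}}{\bZ_+}$, observe that $\{s_{d,j} \otimes^{(d-1)} I\}$ is again a Cuntz family of multiplicity $n_d \geq 2$, and invoke Corollary \ref{C:cun ref}. Your additional paragraph explaining why the constant remains exactly $3$ (the operator $U$ of Theorem \ref{T:ue LN} being unitary in the Cuntz case) is correct but redundant, since Corollary \ref{C:cun ref} already records the constant $3$ and the disintegration of Proposition \ref{P:disintegrate} is unitary.
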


\begin{proof}
Suppose without loss of generality that $\al_{\bo{d}}$ is defined by a Cuntz family with $n_{\bo{d}} \geq 2$.
Then $\wh{\al}_{\bo{d}}$ is also given by the Cuntz family $\{s_{j} \otimes^{d-1} I\}$ of size $n_{\bo{d}}$.
By  Proposition \ref{P:disintegrate} we can write $\scp{\A}{\al}{\bZ_+^d} \simeq \scp{\B}{\wh{\al}_{\bo{d}}}{\bZ_+}$ for some w*-closed algebra $\B$. 
Applying then Corollary \ref{C:cun ref} completes the proof.
\end{proof}

\begin{corollary}\label{C:tensor com ref}
If $\A$ is reflexive then $\A \, \ol{\otimes} \, \bH^\infty(\bZ_+^d)$ is reflexive.
\end{corollary}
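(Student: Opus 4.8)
The plan is to identify $\A \, \ol{\otimes} \, \bH^\infty(\bZ_+^d)$ with the w*-semicrossed product $\scp{\A}{\id}{\bZ_+^d}$ attached to the \emph{trivial} action $\id \colon \bZ_+^d \to \End(\A)$, and then to invoke Theorem \ref{T:com ref}(ii).

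First I would check the identification. Since $\Bl_{\un{m}} \Bl_{\un{n}} = \Bl_{\un{m} + \un{n}}$ for all $\un{m}, \un{n} \in \bZ_+^d$, the algebra $\bH^\infty(\bZ_+^d) = \ol{\alg}^{\textup{wot}}\{\Bl_{\un{m}} \mid \un{m} \in \bZ_+^d\}$ is in fact the wot-closed linear span of the creation operators $\Bl_{\un{m}}$, and by Fej\'{e}r's Lemma (as already observed after its definition) this coincides with their w*-closed linear span. Consequently $\A \, \ol{\otimes} \, \bH^\infty(\bZ_+^d)$ is the w*-closure of $\spn\{a \otimes \Bl_{\un{m}} \mid a \in \A,\ \un{m} \in \bZ_+^d\}$. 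On the other hand, for the trivial action one has $\pi(a) = a \otimes I$ and $L_{\un{n}} = I_\H \otimes \Bl_{\un{n}}$, so that $L_{\un{n}} \pi(a) = a \otimes \Bl_{\un{n}}$; hence $\scp{\A}{\id}{\bZ_+^d} = \A \, \ol{\otimes} \, \bH^\infty(\bZ_+^d)$.

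Next I would verify that the trivial action is a uniformly bounded spatial action of capacity $1$. Each $\id_\A$ is implemented by the invertible row operator $u_{\Bi} = [I_\H]$ with $n_i = 1$, whose inverse is again $[I_\H]$; and every one of the operators $\wh{u}_\mu, \wh{v}_\mu$ of Definition \ref{D:ubo defn} equals $I_\H$, so the family $\{u_{\Bi}\}_{i \in [d]}$ is uniformly bounded (by $1$). The capacity is therefore $M = \prod_{i \in [d]} n_i = 1$. Since $\A$ is reflexive, Theorem \ref{T:com ref}(ii) applies directly and gives that $\scp{\A}{\id}{\bZ_+^d} = \A \, \ol{\otimes} \, \bH^\infty(\bZ_+^d)$ is reflexive.

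I do not expect any genuine obstacle; the single point meriting care is the identification of $\A \, \ol{\otimes} \, \bH^\infty(\bZ_+^d)$ with the trivial-action semicrossed product, which is routine once Fej\'{e}r's Lemma is used to reconcile the w*- and wot-closures. As an alternative I could give a self-contained argument mimicking the Remark after Proposition \ref{P:spatial}, using the co-invariant subspaces generated by the vectors $g_{\Bi} = \sum_{k \in \bZ_+} r^k e_{k \Bi}$ with $r \in (0,1)$ to show directly that every $T \in \Ref(\A \, \ol{\otimes} \, \bH^\infty(\bZ_+^d))$ is lower triangular with $G_{\un{m}}(T) = L_{\un{m}}(x_{\un{m}} \otimes I)$ for some $x_{\un{m}} \in \Ref(\A) = \A$; but this would merely reprove, for the trivial action, the content already encapsulated by Theorem \ref{T:com ref}(ii) via \cite[Theorem 2.9]{Kak09} and Proposition \ref{P:disintegrate}.
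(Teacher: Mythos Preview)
Your proposal is correct and follows exactly the approach the paper intends: the corollary is stated without proof immediately after Theorem~\ref{T:com ref}, and the intended derivation is precisely to recognise $\A \, \ol{\otimes} \, \bH^\infty(\bZ_+^d)$ as $\scp{\A}{\id}{\bZ_+^d}$ for the trivial action (each $\al_{\Bi}$ implemented by the single invertible $I_\H$, so $M=1$) and then apply Theorem~\ref{T:com ref}(ii).
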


\begin{corollary}\label{C:masa com ref}
Let $(\A,\al,\bZ_+^d)$ be a unital automorphic system over a maximal abelian selfadjoint algebra $\A$.
Then $\scp{\A}{\al}{\bZ_+^d}$ is reflexive.
\end{corollary}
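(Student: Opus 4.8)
The plan is to obtain this as an immediate application of Theorem \ref{T:com ref}(ii): the only point to check is that an automorphic system on a m.a.s.a.\ is a unital w*-dynamical system given by a uniformly bounded spatial action of capacity $M = 1$, over a reflexive algebra.

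First I would record that $\A$, being a maximal abelian selfadjoint algebra, is a von Neumann algebra (indeed $\A = \A'$), hence reflexive, as recalled in the introduction. Next I would use the classical fact that every w*-continuous $*$-automorphism $\al_{\Bi}$ of a m.a.s.a.\ on $\H$ is spatially implemented by a unitary $u_{\Bi}\in\B(\H)$: one realizes $(\A,\H)$ as $\big(L^\infty(X,\mu),L^2(X,\mu)\big)$, so that $\al_{\Bi}$ is induced by a non-singular point transformation of $X$ and is therefore implemented by a unitary on $L^2$. In the notation of Section \ref{S:dyn sys} this presents $\al_{\Bi}$ as implemented by the invertible row operator $u_{\Bi} = [u_{\Bi}]$ of multiplicity $n_i = 1$, so the capacity of the system is $M = \prod_{i\in[d]} n_i = 1$.

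For the uniform bound: each $u_{\Bi}$ is a unitary, so all its powers, and those of $u_{\Bi}^{-1} = u_{\Bi}^*$, have norm $1$; thus each $\al_{\Bi}$ is uniformly bounded spatial in the sense of Definition \ref{D:uba defn}, with constant $K_i = 1$, and $(\A,\al,\bZ_+^d)$ is a unital w*-dynamical system to which Theorem \ref{T:com ref} applies. (The unitaries $u_{\Bi}$ need not commute, cf.\ Example \ref{E:Weyl}; only the $\al_{\Bi}$ commute, which is all that is used, and the iterated spatial systems produced inside the proof of Theorem \ref{T:com ref} via Proposition \ref{P:disintegrate} are again implemented by unitaries.) With $M = 1$ and $\A$ reflexive, Theorem \ref{T:com ref}(ii) now yields that $\scp{\A}{\al}{\bZ_+^d}$ is reflexive.

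The only genuinely non-formal ingredient is the unitary implementability of automorphisms of a m.a.s.a.\ invoked above; this is classical and already underlies Corollary \ref{C:masa ref} and Corollary \ref{C:inv cl com}(ii), so I would simply cite it rather than reprove it. Everything else is a matter of reading off the hypotheses of the already-established Theorem \ref{T:com ref}(ii), and there is no real obstacle beyond that citation.
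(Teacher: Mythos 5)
Your proposal is correct and is precisely the argument the paper intends — the corollary is left unproved there as an immediate consequence of Theorem \ref{T:com ref}(ii), exactly as you read it: automorphisms of a m.a.s.a.\ are unitarily implemented (the same classical fact underlying Corollaries \ref{C:masa ref} and \ref{C:inv cl cun}(ii)), so the action is uniformly bounded spatial with capacity $M=1$ and constant $K=1$, while $\A=\A'$ is a von Neumann algebra and hence reflexive. Your parenthetical check that the iterated systems from Proposition \ref{P:disintegrate} remain unitarily implemented is a worthwhile detail the paper leaves implicit.
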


We can apply the arguments of \cite{Hel14} to tackle other dynamical systems.

\begin{definition}
A w*-dynamical system $(\A,\al,\bZ_+^d)$ is \emph{injectively reflexive} if: (i) $\A$ is reflexive, (ii) $\A$ is injectively reducible by $M$; and (iii) $\be_{\un{n}}(\A)$ is reflexive for all $\un{n} \in \bZ_+^d$ with
\[
\be_{\un{n}}(a) = \begin{bmatrix} a|_{M} & 0 \\ 0 & \al_{\un{n}}(a)|_{M^\perp} \end{bmatrix}.
\]
\end{definition}

Consequently every $(\A,\al_\Bi,\bZ_+)$ is injectively reflexive for the same $M$.
Again it follows that systems over Type II or Type III factors are injectively reflexive.

\begin{theorem}\label{T:Hel com}
Let $(\A,\al,\bZ_+^d)$ be a unital w*-dynamical system.
If the system is injectively reflexive then $\scp{\A}{\al}{\bZ_+^d}$ is reflexive.
\end{theorem}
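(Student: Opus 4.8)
The plan is to transcribe the proof of Theorem \ref{T:Hel} to the $\bZ_+^d$-setting, working directly over $\bZ_+^d$ rather than disintegrating along the coordinates as in Proposition \ref{P:disintegrate}; the disintegration route would require knowing that injective reflexivity descends to the iterated systems $(\scp{\A}{\al_{\bo{1}}}{\bZ_+}, \wh{\al}_{\bo{2}}, \bZ_+), \dots$, which is not apparent. So fix $T \in \Ref(\scp{\A}{\al}{\bZ_+^d})$. Since the gauge action of $\{U_{\un{s}}\}_{\un{s} \in [-\pi,\pi]^d}$ restricts to $\scp{\A}{\al}{\bZ_+^d}$, a Fej\'er argument for the associated Fourier co-efficients $G_{\un{m}}$ reduces the problem to showing $G_{\un{m}}(T) \in \scp{\A}{\al}{\bZ_+^d}$ for every $\un{m} \in \bZ^d$. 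Running the approximation argument of the first paragraph of Remark \ref{R:ref ind} with Proposition \ref{P:Fc al com} in place of Proposition \ref{P:Fc al}, one obtains that $T$ is lower triangular, that $G_{\un{m}}(T) = 0$ whenever $\un{m} \notin \bZ_+^d$, and that $T_{\un{m} + \un{w}, \un{w}} \in \Ref(\A) = \A$ for all $\un{m}, \un{w} \in \bZ_+^d$. By Proposition \ref{P:Fc al com} it then suffices to prove
\[
T_{\un{m} + \un{w}, \un{w}} = \al_{\un{w}}(T_{\un{m}, \un{0}}) \qforal \un{w} \in \bZ_+^d,
\]
since this identifies $G_{\un{m}}(T)$ with $L_{\un{m}} \pi(T_{\un{m}, \un{0}}) \in \scp{\A}{\al}{\bZ_+^d}$.

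Next I would fix $\un{w} \in \bZ_+^d$ and let $M$ be a subspace that injectively reduces $\A$. As each $\al_{\un{v}}$ is an endomorphism of $\A$ we have $\al_{\un{v}}(a) \in \A$, hence $\al_{\un{v}}(a)$ leaves both $M$ and $M^\perp$ invariant. On $\K := \H \otimes \ell^2(\bZ_+^d)$ set
\[
\K_0 := \ol{\spn}\{\xi \otimes e_{\un{v}} \mid \xi \in M,\ \un{v} \in \bZ_+^d\}
\qand
\K_{\un{w}} := \ol{\spn}\{\eta \otimes e_{\un{w} + \un{v}} \mid \eta \in M^\perp,\ \un{v} \in \bZ_+^d\}.
\]
These are orthogonal (lying in $M \otimes \ell^2(\bZ_+^d)$ and $M^\perp \otimes \ell^2(\bZ_+^d)$ respectively) and each is invariant for every $L_{\Bi}$ and for $\pi(\A)$, so $p\K := \K_0 \oplus \K_{\un{w}}$ is an invariant subspace of $\scp{\A}{\al}{\bZ_+^d}$, with $p$ the projection onto it. Consequently $Tp = pTp$ and the compression satisfies $G_{\un{m}}(T)|_{p\K} \in \Ref\big( (\scp{\A}{\al}{\bZ_+^d})|_{p\K} \big)$.

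Then I would pass to $\K$ via the unitary $U \colon p\K \to \K$, $U(\xi \otimes e_{\un{v}} + \eta \otimes e_{\un{w} + \un{v}}) = (\xi + \eta) \otimes e_{\un{v}}$. A direct computation gives $U (L_{\Bi}|_{p\K}) U^* = L_{\Bi}$ and
\[
U (\pi(a)|_{p\K}) U^* = \sum_{\un{v} \in \bZ_+^d} \big( \al_{\un{v}}(a)|_M \oplus \al_{\un{w} + \un{v}}(a)|_{M^\perp} \big) \otimes p_{\un{v}},
\]
so $U \big( (\scp{\A}{\al}{\bZ_+^d})|_{p\K} \big) U^*$ is a w*-semicrossed-type algebra whose compression to $\H \otimes p_{\un{0}}$ is exactly $\be_{\un{w}}(\A)$ (using $\al_{\un{0}} = \id$). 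Since $T$ is lower triangular, Proposition \ref{P:lower triangular com} yields
\[
U \big( G_{\un{m}}(T)|_{p\K} \big) U^* = \sum_{\un{v} \in \bZ_+^d} L_{\un{m}}\Big( \big( T_{\un{m} + \un{v}, \un{v}}|_M \oplus T_{\un{m} + \un{w} + \un{v}, \un{w} + \un{v}}|_{M^\perp} \big) \otimes p_{\un{v}} \Big),
\]
and reading off the $(\un{m}, \un{0})$-block shows that $T_{\un{m}, \un{0}}|_M \oplus T_{\un{m} + \un{w}, \un{w}}|_{M^\perp}$ lies in $\Ref(\be_{\un{w}}(\A)) = \be_{\un{w}}(\A)$, the last equality being hypothesis (iii). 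Hence there is $a \in \A$ with $T_{\un{m}, \un{0}}|_M = a|_M$ and $T_{\un{m} + \un{w}, \un{w}}|_{M^\perp} = \al_{\un{w}}(a)|_{M^\perp}$, and injectivity of the two restrictions forces $T_{\un{m}, \un{0}} = a$ and $T_{\un{m} + \un{w}, \un{w}} = \al_{\un{w}}(a) = \al_{\un{w}}(T_{\un{m}, \un{0}})$, completing the proof. I expect the main obstacle to be organizational rather than conceptual: one must check carefully that $p\K$ is invariant (so the passage to $\Ref$ of the compression is legitimate) and that after conjugation by $U$ the compressed algebra really carries the $\be_{\un{w}}$-twisted diagonal, so that the reflexivity of $\be_{\un{w}}(\A)$ granted by hypothesis (iii) can be invoked; beyond these points the argument is a faithful copy of the $\bF_+^d$ case of Theorem \ref{T:Hel}.
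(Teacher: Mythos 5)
Your proposal is correct and follows essentially the same route as the paper's proof: the paper likewise works directly over $\bZ_+^d$ (no disintegration), reduces via Fej\'{e}r and lower-triangularity to showing $T_{\un{m}+\un{n},\un{n}} = \al_{\un{n}}(T_{\un{m},\un{0}})$, and then uses exactly the invariant subspace $K_{\un{0}} \oplus K_{\un{n}}$, the unitary $U$, and reflexivity of $\be_{\un{n}}(\A)$ together with injectivity of the restrictions to $M$ and $M^\perp$. The only difference is notational ($\un{w}$ versus $\un{n}$) and that you spell out some routine verifications the paper leaves implicit.
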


\begin{proof}
The proof follows in a similar way as in Theorem \ref{T:Hel}.
In short if $T$ is in $\Ref(\scp{\A}{\al}{\bZ_+^d})$ then $T$ is lower triangular and $T_{\un{m}, \un{0}} \in \A$ for every $\un{m} \in \bZ_+^d$.
Thus we just need to show that $T_{\un{m} + \un{n}, \un{n}} = \al_{\un{n}}(T_{\un{m}, \un{0}})$ for every $\un{n} \in \bZ_+^d$.
For a fixed $\un{n}$ let the spaces
\[
K_{\un{0}} := \ol{\spn}\{ \xi \otimes e_{\un{w}} \mid \xi \in M, \un{w} \in \bZ_+^d\}
\]
and
\[
K_{\un{n}} := \ol{\spn}\{ \eta \otimes e_{\un{n} + \un{w}} \mid \eta \in M^\perp, \un{w} \in \bZ_+^d\}
\]
and let the unitary $U \colon K_{\un{0}} \oplus K_{\un{n}} \to \H \otimes \ell^2(\bZ_+^d)$ given by
\[
U(\xi \otimes e_{\un{w}} + \eta \otimes e_{\un{n} + \un{w}}) = (\xi + \eta) \otimes e_{\un{w}}.
\]
If $p$ is the projection on $K_{\un{0}} \oplus K_{\un{n}}$ then
\[
U \pi(a) p U^* = \sum_{\un{w} \in \bZ_+^d} (\al_{\un{w}}(a)|_{M} + \al_{\un{n} + \un{w}}(a)|_{M^\perp}) \otimes p_{\un{w}} \qand U L_{\Bi}p U^* = L_{\Bi}.
\]
On the other hand we have that
\[
U G_{\un{m}}(T)p U^* = L_{\un{m}} \sum_{\un{w} \in \bZ_+^d} (T_{\un{m} + \un{w}, \un{w}}|_{M} + T_{\un{n} + \un{m} + \un{w}, \un{n} + \un{w}}|_{M^\perp}) \otimes p_{\un{w}}.
\]
Taking compressions and using reflexivity of $\be_{\un{n}}(\A)$ implies that there exists an $a \in \A$ such that
\[
T_{\un{m}, \un{0}}|_M + T_{\un{n} + \un{m}, \un{n}}|_{M^\perp} = a|_M + \al_{\un{n}}(a)|_{M^\perp},
\]
and therefore $T_{\un{m} + \un{n}, \un{n}} = \al_{\un{n}}(a) = \al_{\un{n}}(T_{\un{m}, \un{0}})$.
\end{proof}

\begin{corollary}\label{C:factor com ref}
Let $(\A,\al,\bZ_+^d)$ be a unital w*-dynamical system on a factor $\A \subseteq \B(\H)$ for a separable Hilbert space $\H$.
Then $\scp{\A}{\al}{\bZ_+^d}$ is reflexive.
\end{corollary}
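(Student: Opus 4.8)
The plan is to establish Corollary \ref{C:factor com ref} exactly as its free-semigroup counterpart Corollary \ref{C:factor ref}: split into the cases $\A = \B(\H)$ and $\A \neq \B(\H)$, and reduce the second to Theorem \ref{T:Hel com}.

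First I would dispose of the case $\A = \B(\H)$, for which Theorem \ref{T:Hel com} is of no use, since $\B(\H)$ has no non-trivial reducing subspace and so is not injectively reducible. As $\H$ is separable, each $\al_{\Bi}$ is implemented by a Cuntz family $[s_{i,j_i}]_{j_i \in [n_i]}$ by \cite[Proposition 2.1]{Arv89} (see Examples \ref{E:odometer}); these furnish a uniformly bounded spatial action with constant $1$ and capacity $M = \prod_{i \in [d]} n_i$. If $M \geq 2$ then Corollary \ref{C:com ref} (equivalently Theorem \ref{T:com ref}(i)) shows that $\scp{\B(\H)}{\al}{\bZ_+^d}$ is hyper-reflexive, hence reflexive. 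If $M = 1$ then every $n_i = 1$, each $s_{i,1}$ is a unitary and each $\al_{\Bi} = \ad_{s_{i,1}}$ is an automorphism; since $\B(\H)$ is a von Neumann algebra and therefore reflexive, Theorem \ref{T:com ref}(ii) yields that $\scp{\B(\H)}{\al}{\bZ_+^d}$ is reflexive.

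Next I would treat $\A \neq \B(\H)$ by checking that the system $(\A, \al, \bZ_+^d)$ is injectively reflexive and then applying Theorem \ref{T:Hel com}. Property (i) is clear, $\A$ being a von Neumann algebra. For (ii): from $\A = \A'' \neq \B(\H)$ we get $\A' \neq \bC I$, so $\A'$ contains a non-trivial projection $p$, and $M := \ran p$ is a reducing subspace for $\A$. The compressions $a \mapsto a|_M$ and $a \mapsto a|_{M^\perp}$ are unital normal $*$-homomorphisms of $\A$ whose kernels $\{a \in \A : ap = 0\}$ and $\{a \in \A : a(1-p) = 0\}$ are w*-closed two-sided ideals of $\A$ (two-sidedness using $p \in \A'$) that do not contain $I$, since $p, 1-p \neq 0$; as a factor has no non-trivial w*-closed two-sided ideal, both compressions are injective, so $\A$ is injectively reducible by $M$. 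For (iii): $a \mapsto \be_{\un n}(a) = a|_M \oplus \al_{\un n}(a)|_{M^\perp}$ is a unital normal $*$-homomorphism of $\A$, so $\be_{\un n}(\A)$ is a von Neumann algebra, hence reflexive, for every $\un n \in \bZ_+^d$. Thus the system is injectively reflexive, and Theorem \ref{T:Hel com} gives that $\scp{\A}{\al}{\bZ_+^d}$ is reflexive.

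The principal obstacle is the same structural one as for $\bF_+^d$: the heavy lifting in the case $\A \neq \B(\H)$ is entirely delegated to Theorem \ref{T:Hel com} (that is, to Helmer's reflexivity argument as transplanted in Section \ref{S:reflexivity}), whose hypothesis of injective reducibility genuinely excludes $\A = \B(\H)$ and so forces the separate spatial treatment above through Theorem \ref{T:com ref}. The remaining points require only standard facts about factors: injectivity of the two compressions (no non-trivial w*-closed ideals) and the observation that the twisted diagonal $\be_{\un n}(\A)$, being the image of a von Neumann algebra under a normal $*$-homomorphism, is again a von Neumann algebra and hence reflexive.
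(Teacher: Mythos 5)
Your proof is correct and follows exactly the route the paper intends: the dichotomy $\A = \B(\H)$ (handled spatially via Arveson's Cuntz-family implementation and Theorem \ref{T:com ref}) versus $\A \neq \B(\H)$ (a non-trivial projection in $\A'$ makes the system injectively reflexive, so Theorem \ref{T:Hel com} applies), which is precisely the argument given for the $\bF_+^d$ counterpart Corollary \ref{C:factor ref}. Your verification of the injectivity of the two compressions and of the reflexivity of $\be_{\un{n}}(\A)$ fills in details the paper leaves implicit, but the approach is the same.
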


\begin{remark}
The w*-semicrossed products $\scp{\A}{\al}{\bZ_+^d}$ do not fit in the theory of W*-correspondences.
This has been observed in \cite{DFK14, Kak13} for the norm-analogues but the arguments apply here mutatis mutandis.
That is, when $\A = \bC$ then $\scp{\A}{\al}{\bZ_+^d}$ is the commutative algebra $\bH^\infty(\bZ_+^d)$.
Therefore the results of this section are disjoint from those of \cite{Hel14} when $d \geq 2$.
\end{remark}



\end{document}